\newtheorem{theorem}{Theorem}[section]
\newtheorem*{theorem2}{Theorem A}
\newtheorem{corollary}[theorem]{Corollary}
\newtheorem{lemma}[theorem]{Lemma}
\newtheorem{proposition}[theorem]{Proposition}
\theoremstyle{definition}
\newtheorem{definition}[theorem]{Definition}
\newtheorem{remark}[theorem]{Remark}
\newtheorem{example}[theorem]{Example}
\numberwithin{equation}{section}
\begin{document}

	\baselineskip=17pt
	

	\title{Bowen's  equations for upper metric mean dimension with potential}
	
		\author{Rui Yang$^{1}$, Ercai Chen$^{1}$ and Xiaoyao Zhou*$^{1}$\\
			\small 1 School of Mathematical Sciences and Institute of Mathematics, Nanjing Normal University,\\
			\small Nanjing 210023, Jiangsu, P.R.China\\
			\small    e-mail: zkyangrui2015@163.com; ecchen@njnu.edu.cn; zhouxiaoyaodeyouxian@126.com\\ }
	
	\date{}
	
	\maketitle
	
	
	\renewcommand{\thefootnote}{}
	
	\footnote{2020 \emph{Mathematics Subject Classification}:   58F11, 58F15.}
	
	\footnote{\emph{Key words and phrases}: Variational principle; Upper metric mean dimension with potential; Bowen's equation; Generic points.}
	
	\footnote{*corresponding author}
	
	\renewcommand{\thefootnote}{\arabic{footnote}}
	\setcounter{footnote}{0}
	
	
	\begin{abstract}
   Firstly, we  introduce a new notion called induced  upper metric  mean dimension  with potential, which  naturally  generalizes the definition of  upper metric mean dimension with potential  given by Tsukamoto to more general cases, then we  establish  variational principles  for it in terms of  upper  and lower  rate distortion dimensions and show   there exists  a Bowen's  equation   between induced  upper metric  mean dimension  with potential and  upper metric  mean dimension with potential.
  
    Secondly, we continue to  introduce  two new notions, called  BS  metric mean dimension and Packing BS metric mean dimension  on  arbitrary subsets, to  establish  Bowen's equations for Bowen upper  metric mean dimension  and Packing upper metric mean dimension with potential on subsets.  Besides,  we also obtain two  variational principles for  BS  metric mean dimension and Packing BS metric mean dimension on subsets. 
    
    Finally, the special interest about the Bowen  upper   metric  mean dimension of  the set of generic points of  ergodic measures are also involved. 
	\end{abstract}
	\section{Introduction}

Mean topological dimension introduced  by Gromov \cite{gromov}  is a new  topological  invariant  in  topological dynamical systems. Later,  Lindenstrauss and Weiss \cite{lw00}  introduced the notion called metric mean dimension   to capture the complexity of infinite topological entropy systems and revealed the well-known fact that  metric mean dimension  is an upper bound of mean topological dimension. Therefore,  metric mean dimension plays a vital role in dimension theory and deserves some special attentions. Very recently, Lindenstrauss  and Tsukamoto's   pioneering work \cite{lt18} showed  a  first important  relationship  between  mean dimension  theory  and  ergodic  theory, which is an analogue of classical variational  principle  for topological entropy. More discussions associated with this result can be  found in  \cite{gs20,chen}.   From that time on,  Lindenstrauss  and Tsukamoto's work inspired more and more researchers   to  inject  ergodic theoretic ideas  into mean dimension theory by constructing  some new variational principles, and we refer to \cite{vv17, lt19, t20, gs20, shi, cls21, w21} for more details.
Before stating our main results, we list some basic notions and recall some necessary backgrounds. 
	
By a pair $(X,f)$ we mean a  topological dynamical system (TDS for short), where  $X$ is a compact  metrizable  topological space and $ f$ is a  continuous self-map on $X$.   The set of metrics on $X$  compatible with the topology is denoted by $\mathscr D(X)$. We denote by  $C(X,\mathbb{R})$  the set of all  real-valued continuous functions of $X$ equipped  with the  supremum norm.  By $M(X),M(X,f),E(X,f)$  we denote  the sets of  all Borel probability measures on $X$, all $f$-invariant Borel probability measures on $X$, all ergodic measures on $X$, respectively.

In the setting of quasi-circles, Bowen \cite{b79} firstly  found  the Hausdorff dimension of  certain compact set is exactly  the unique root of the equation defined by the  topological pressure  of geometric potential function, which   was later known as Bowen's equation. In 2000,  Barreira and Schmeling  \cite{bs00} introduced  the notion of BS dimension (or  called u-dimension in that  paper) on subsets and proved that  BS dimension  is  the unique root of the equation defined by topological pressure  of additive potential function. The  non-additive setting   and  non-uniform setting  about  Bowen's equation can be found in \cite{b96} and \cite{c11}, respectively. Later, Xing  and Chen \cite{xc}  extended  the work of  Jaerisch et al. \cite{jms14}  to  general topological dynamical systems and introduced a notion called  induced topological pressure that specializes the  BS dimension,  and  they revealed   an important link between the induced topological pressure and the classical topological pressure is Bowen's  equation. Based on these work, the first purpose of  this paper is to  establish Bowen's equation  for upper metric mean dimension with potential on the whole phase space.

One says that a   topological dynamical system $(X, f)$ admits \emph{marker property}  if for any $N>0$, there exists an open set $U\subset X$  with property that  
$$U\cap f^n U=\emptyset, 1\leq n\leq N,  \mbox{and}~X=\cup_{n\in \mathbb{Z}} f^n U.$$

The  symbolic  version of marker property  was first introduced  by Krieger  in  \cite{k82} and then   the simplified version  of  Krieger's  marker lemma was given in \cite[Lemma 2.2]{b83}, and the  non-symbolic version of  marker property was  defined  in \cite[Definition 2]{d06} based on the  Krieger's marker lemma \cite{k82,b83}. 
For example,  free (no  aperiodic points) minimal systems  and their extensions \cite[Lemma 3.3]{l99}, an aperiodic finite-dimensional  TDS \cite[Theorem 6.1]{g15} and an extension of an aperiodic TDS  which has a countable number of minimal subsystems \cite[Theorem 3.5]{g17}  have marker property. In general,  the marker property  implies the aperiodicity  and  whether the aperiodicity implies the marker property or not is still an open problem   posed by Gutman in  \cite[Problem 5.4]{g15} and \cite[Problem 3.4]{g17}. As an application,  marker property  have been extensively  used  to  deal with the  embedding problems, readers can  turn to  \cite{l99,g15,glt16,g17,lt19,t20,gt20} for more details of this aspect.
	 
Tsukamoto \cite{t20} introduced a notion called upper metric mean dimension with  potential and proved the following 

\begin{theorem2}\label{thm A}
	 Let (X, f)  be a TDS admitting the marker property.  Then for all $d \in \mathscr{D}^{'}(X)$,
	 \begin{align*}
	 \overline{{mdim}}_M(X,f,d,\varphi)&=\sup_{\mu\in M(X,f)}\left(\underline{rdim}(X,f,d,\mu)+\int_X \varphi d\mu\right)\\
	 &=\sup_{\mu\in M(X,f)}\left(\overline{rdim}(X,f,d,\mu)+\int_X \varphi d\mu\right),
	 \end{align*}
where
 $\mathscr{D}^{'}(X)=\{d\in \mathscr{D}(X): mdim(X,f,\varphi)=\overline{{mdim}}_M(X,f,d,\varphi)\}$, $mdim(X,f,\varphi)$\\ denotes mean dimension with potential $\varphi$, see \rm{\cite[Subsection 1.2]{t20} }for its explicit definition. 
$\overline{{mdim}}_M(X,f,d,\varphi)$ is  upper metric mean dimension with potential $\varphi$ given in  subsection 2.1. $\underline{rdim}(X,f,d,\mu) $ and $ \overline{rdim}(X,f,d,\mu)$  respectively denote  lower and upper rate distortion dimensions, see \cite[Section 2]{t20} for  their  precise definitions.
\end{theorem2}

We remark that Theorem A can be  directly  deduced from \cite[Corollary 1.7, Theorem 1.8]{t20}. Here,  we borrow some ideas from \cite{jms14,xc} to define induced upper metric mean dimension with potential and establish a  Bowen's equation for  upper  metric mean dimension with potential on the whole phase space.

\begin{theorem}\label{thm 1.1}
	  
Let $(X,f)$ be a  TDS   with a  metric $d\in \mathscr{D}(X)$ and $\varphi, \psi \in C(X,\mathbb{R})$  with $\psi >0$.  Suppose that $\overline{mdim}_{M}(X,f,d,\varphi)<\infty$.
Then  $\overline{mdim}_{M,\psi}(X,f,d,\varphi)$ is the unique root of the equation $\overline{mdim}_{M}(X,f,d,\varphi-\beta\psi)=0,$
where $\overline{mdim}_{M,\psi}(X,f,d,\varphi)$ is called  $\psi$-induced upper metric mean dimension  with potential $\varphi$ defined in Subsection 2.1.

\end{theorem}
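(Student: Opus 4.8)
The plan is to reduce the statement to a one–variable fact about the real function $\Phi(\beta):=\overline{mdim}_{M}(X,f,d,\varphi-\beta\psi)$, $\beta\in\mathbb{R}$: I will show $\Phi$ is finite, Lipschitz, strictly decreasing, and tends to $\mp\infty$ as $\beta\to\pm\infty$, so that $\Phi(\beta)=0$ has a unique root, and then identify that root with $\overline{mdim}_{M,\psi}(X,f,d,\varphi)$. For the first part, note that since $X$ is compact and $\psi$ is continuous and strictly positive, $0<a:=\min_{X}\psi\le\psi\le\max_{X}\psi=:b$. For $\beta_{1}\le\beta_{2}$ the difference $(\varphi-\beta_{1}\psi)-(\varphi-\beta_{2}\psi)=(\beta_{2}-\beta_{1})\psi$ is pinched between the constants $(\beta_{2}-\beta_{1})a$ and $(\beta_{2}-\beta_{1})b$; combining the monotonicity of $\overline{mdim}_{M}(X,f,d,\cdot)$ in the potential with the fact that adding a constant $c$ to the potential shifts $\overline{mdim}_{M}(X,f,d,\cdot)$ by exactly $c$ (both elementary, recorded in Subsection 2.1), one obtains
$$-(\beta_{2}-\beta_{1})\,b \;\le\; \Phi(\beta_{2})-\Phi(\beta_{1}) \;\le\; -(\beta_{2}-\beta_{1})\,a .$$
Hence $\Phi$ is Lipschitz and strictly decreasing; using the hypothesis $\Phi(0)=\overline{mdim}_{M}(X,f,d,\varphi)<\infty$ together with the trivial lower bound $\Phi(0)\ge\min_{X}\varphi>-\infty$ (e.g. by considering a single point) and the displayed slope estimates, $\Phi$ is finite on all of $\mathbb{R}$ with $\Phi(\beta)\to-\infty$ as $\beta\to+\infty$ and $\Phi(\beta)\to+\infty$ as $\beta\to-\infty$. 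By the intermediate value theorem there is exactly one $\beta^{\ast}$ with $\Phi(\beta^{\ast})=0$, settling existence and uniqueness of the root.

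It remains to prove $\overline{mdim}_{M,\psi}(X,f,d,\varphi)=\beta^{\ast}$, which is the heart of the argument. Unwinding the definition of the $\psi$–induced upper metric mean dimension with potential from Subsection 2.1 and applying Bowen's equation for the induced topological pressure at each fixed scale $\epsilon$ — the content of \cite{jms14,xc}, which can also be re–derived directly by the usual separated–set combinatorics — one writes $\overline{mdim}_{M,\psi}(X,f,d,\varphi)=\limsup_{\epsilon\to 0}\beta_{\epsilon}$, where $\beta_{\epsilon}$ is the unique root of $g_{\epsilon}(\beta)=0$, $g_{\epsilon}$ being the scale-$\epsilon$ analogue of $\Phi$ (its value before the outer $\limsup_{\epsilon\to0}$), so that $\Phi(\beta)=\limsup_{\epsilon\to 0}g_{\epsilon}(\beta)$. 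The same pinching argument carried out at a fixed scale $\epsilon$ shows that each $g_{\epsilon}$ satisfies $-(\beta_{2}-\beta_{1})b\le g_{\epsilon}(\beta_{2})-g_{\epsilon}(\beta_{1})\le-(\beta_{2}-\beta_{1})a$, i.e. the two-sided slope bound is uniform in $\epsilon$. A routine sandwiching then yields $\limsup_{\epsilon\to0}\beta_{\epsilon}=\beta^{\ast}$: if $\beta_{\epsilon}>\beta^{\ast}+\delta$ along a subsequence (for some $\delta>0$), the uniform slope bound forces $g_{\epsilon}(\beta^{\ast}+\delta)\ge(\beta_{\epsilon}-\beta^{\ast}-\delta)a$ there, hence $\Phi(\beta^{\ast}+\delta)\ge0$, contradicting $\Phi(\beta^{\ast}+\delta)\le\Phi(\beta^{\ast})-\delta a=-\delta a$; and if $\limsup_{\epsilon}\beta_{\epsilon}<\beta^{\ast}-\delta$, then $g_{\epsilon}(\beta^{\ast}-\delta)\le g_{\epsilon}(\beta_{\epsilon})-(\beta^{\ast}-\delta-\beta_{\epsilon})a<0$ for all small $\epsilon$, so $\Phi(\beta^{\ast}-\delta)\le0$, contradicting $\Phi(\beta^{\ast}-\delta)\ge\delta a$.

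The main obstacle I anticipate is precisely this last interchange of limits — passing from "the root of the scale-$\epsilon$ equation, then let $\epsilon\to0$" to "the root of the $\epsilon\to0$ equation" — and the positivity of $\psi$ (equivalently the uniform two-sided slope bound from $0<a\le\psi\le b$) is exactly what makes it work; without it $\Phi$ need not be strictly monotone and the root need be neither unique nor stable. A secondary, more bookkeeping point is to check that whatever rescaling of the potential appears in the definition of $\overline{mdim}_{M}(X,f,d,\cdot)$ (chosen so that the potential is not washed out by the normalization by $\log(1/\epsilon)$) is tracked consistently through Bowen's equation at scale $\epsilon$, so that the accompanying $\epsilon$-dependent rescaling of the inducing function $\psi$ is harmless.
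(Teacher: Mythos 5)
Your first paragraph is sound and coincides with what the paper does in Proposition \ref{prop 2.6}: the pinching $0<a\le\psi\le b$ gives the two-sided slope bound for $\beta\mapsto\overline{mdim}_{M}(X,f,d,\varphi-\beta\psi)$, hence finiteness, Lipschitz continuity, strict monotonicity, and a unique root $\beta^{*}$. Your second paragraph, however, hides a genuine gap at its very first step. The identity $\overline{mdim}_{M,\psi}(X,f,d,\varphi)=\limsup_{\epsilon\to 0}\beta_{\epsilon}$, with $\beta_{\epsilon}$ the root of the scale-$\epsilon$ equation $g_{\epsilon}(\beta)=0$, is not something you can obtain by "unwinding the definition" or by citing \cite{jms14,xc}: those results concern the classical induced topological pressure, in which the resolution is sent to $0$ inside the pressure, whereas here you need a Bowen-type equation at a \emph{fixed} scale $\epsilon$ relating $\limsup_{T\to\infty}\frac{1}{T}\log P_{\psi,T}(X,f,d,\varphi,\epsilon)$ (a supremum of $(1/\epsilon)^{S_{n}\varphi}$-weighted sums over separated subsets of the level sets $X_{n}=\{x:S_{n}\psi(x)\le T<S_{n+1}\psi(x)\}$, $n\in S_{T}$) to the root of $\beta\mapsto P(X,f,d,\varphi-\beta\psi,\epsilon)$. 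Establishing this is precisely the content of the paper's Theorem \ref{thm 2.4} together with Corollaries \ref{coro 2.5} and \ref{coro 2.7}: one must compare separated sets of the level sets $X_{n}$ and $Y_{n}$ with separated sets of $X$, control $S_{n}\psi$ against $T$ via the integers $m_{n}(x)$, and extract sparse sequences $T_{j}$ so that the index sets $S_{T_{j}}$ are pairwise disjoint before summing the lower bounds. This is the bulk of the proof of Theorem \ref{thm 1.1}, and your proposal black-boxes it behind a citation that does not apply verbatim.

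That said, the remainder of your argument is a legitimately different (and correct, once the missing step is supplied) organization of the proof. The paper never introduces the fixed-scale roots $\beta_{\epsilon}$; instead it characterizes $\overline{mdim}_{M,\psi}$ globally as $\inf\{\beta:\limsup_{\epsilon\to 0}\limsup_{T\to\infty}R_{\psi,T}(X,f,d,\varphi-\beta\psi,\epsilon)<\infty\}$ and then squeezes this between $\inf\{\beta:\overline{mdim}_{M}(X,f,d,\varphi-\beta\psi)\le 0\}$ and $\sup\{\beta:\overline{mdim}_{M}(X,f,d,\varphi-\beta\psi)\ge 0\}$, so no interchange of the root with $\limsup_{\epsilon\to 0}$ is ever needed. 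Your sandwiching argument for $\limsup_{\epsilon\to 0}\beta_{\epsilon}=\beta^{*}$ via the $\epsilon$-uniform slope bounds is correct and is a clean way to perform that interchange if you do set things up with fixed-scale roots; but you would still have to write out the fixed-scale analogue of Theorem \ref{thm 2.4} to justify the starting identity, so this route is not shorter than the paper's.
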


\begin{theorem}\label{thm 1.2}
Let $(X,f)$ be a  TDS  admitting marker property and $\varphi,\psi \in C(X,\mathbb{R})$ with $\psi>0$. Then  for all $d\in \mathscr{D}^{'}(X)$,
\begin{align*}
\overline{{mdim}}_{M,\psi}(X,f,d,\varphi)&=\sup_{\mu\in M(X,f)}\limits \left\{\frac{\underline{rdim}(X,f,d,\mu)}{\int\psi d\mu}+\frac{\int \varphi d\mu}{\int\psi d\mu}\right\}\\
&=\sup_{\mu\in M(X,f)}\limits \left\{\frac{\overline{rdim}(X,f,d,\mu)}{\int\psi d\mu}+\frac{\int \varphi d\mu}{\int\psi d\mu}\right\}.
\end{align*}
\end{theorem}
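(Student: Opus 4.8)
The strategy is to obtain Theorem~\ref{thm 1.2} by feeding the Bowen equation of Theorem~\ref{thm 1.1} into the variational principle of Theorem~A. Put $\beta_0:=\overline{mdim}_{M,\psi}(X,f,d,\varphi)$, which we may assume finite (if it is $+\infty$, then both right-hand sides are $+\infty$ as well, since $\int\psi\,d\mu$ is squeezed between $\min_X\psi>0$ and $\max_X\psi<\infty$ for every $\mu$). By Theorem~\ref{thm 1.1}, $\beta_0$ is the unique root of $\overline{mdim}_M(X,f,d,\varphi-\beta\psi)=0$, so in particular
\[
\overline{mdim}_M(X,f,d,\varphi-\beta_0\psi)=0 .
\]
Applying Theorem~A with the potential $\varphi-\beta_0\psi$ in place of $\varphi$ then gives
\[
0=\sup_{\mu\in M(X,f)}\Bigl(\underline{rdim}(X,f,d,\mu)+\textstyle\int(\varphi-\beta_0\psi)\,d\mu\Bigr)=\sup_{\mu\in M(X,f)}\Bigl(\overline{rdim}(X,f,d,\mu)+\textstyle\int(\varphi-\beta_0\psi)\,d\mu\Bigr).
\]

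From here the argument is elementary. Write $c_1:=\min_X\psi$ and $c_2:=\max_X\psi$; since $\psi>0$ and $X$ is compact, $0<c_1\le\int\psi\,d\mu\le c_2<\infty$ for all $\mu\in M(X,f)$. The first supremum above being $0$ forces $\underline{rdim}(X,f,d,\mu)+\int\varphi\,d\mu\le\beta_0\int\psi\,d\mu$ for every $\mu$, so dividing by $\int\psi\,d\mu>0$ and taking the supremum yields $\sup_{\mu}\bigl(\underline{rdim}(X,f,d,\mu)+\int\varphi\,d\mu\bigr)\big/\int\psi\,d\mu\le\beta_0$. Conversely, choose $\mu_n$ with $\underline{rdim}(X,f,d,\mu_n)+\int(\varphi-\beta_0\psi)\,d\mu_n\ge-1/n$; dividing by $\int\psi\,d\mu_n\in[c_1,c_2]$ gives $\bigl(\underline{rdim}(X,f,d,\mu_n)+\int\varphi\,d\mu_n\bigr)\big/\int\psi\,d\mu_n\ge\beta_0-1/(nc_1)\to\beta_0$. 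These two estimates give the first equality of Theorem~\ref{thm 1.2}, and the identical computation applied to the $\overline{rdim}$-form of the displayed identity gives the second.

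The one point that genuinely requires care — and which I expect to be the main obstacle — is the application of Theorem~A to the shifted potential $\varphi-\beta_0\psi$. Theorem~A is valid for metrics in $\mathscr{D}'(X)$, a class that is defined relative to a fixed potential, so one must argue that $d$ still lies in $\mathscr{D}'(X)$ with respect to $\varphi-\beta_0\psi$; equivalently, that $mdim(X,f,\varphi-\beta_0\psi)=\overline{mdim}_M(X,f,d,\varphi-\beta_0\psi)$. I would isolate and prove this as a preliminary lemma, either by showing that the admissible class $\mathscr{D}'(X)$ does not in fact depend on the chosen potential, or by producing a direct sandwich for $mdim(X,f,\varphi-\beta_0\psi)$ against $\overline{mdim}_M(X,f,d,\varphi-\beta_0\psi)=0$. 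It is worth noting that only the ``$\ge\beta_0$'' inequality in Theorem~\ref{thm 1.2} really uses the marker property and this membership: the bound ``$\le\beta_0$'' already follows from the easy, metric-independent half of the variational principle, $\overline{mdim}_M(X,f,d,\chi)\ge\sup_\mu\bigl(\underline{rdim}(X,f,d,\mu)+\int\chi\,d\mu\bigr)$ (and likewise for $\overline{rdim}$), applied to $\chi=\varphi-\beta_0\psi$. Everything else reduces, as above, to Theorems~\ref{thm 1.1} and~A.
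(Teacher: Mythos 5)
Your proposal is correct and follows essentially the same route as the paper: the paper likewise applies Theorem A to the shifted potential $\varphi-\beta\psi$ and combines it with the critical-value characterization of $\overline{mdim}_{M,\psi}(X,f,d,\varphi)$, the only (cosmetic) difference being that it works with $\beta$ slightly above and below that value via Corollary \ref{coro 2.7} instead of plugging in the exact root from Theorem \ref{thm 1.1} and dividing. The subtlety you flag about $d$ remaining in $\mathscr{D}'(X)$ for the shifted potential is genuine, but the paper's own proof passes over it silently, so your treatment is if anything the more careful one.
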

 
 We would like to emphasize  that  only Theorem 1.2 and subsequent Corollary  3.20 need  the assumption of marker property and hold for some "nice" metrics. 
It is  not clear if we can remove the assumption of marker property in Theorem 1.2. More precisely,  it is unclear  if   for any dynamical system $(X, f)$,  there exists a metric $d\in \mathscr{D}(X)$ such that $mdim(X,f,\varphi)=\overline{{mdim}}_M(X,f,d,\varphi)$. This  open problem was also  mentioned   in \cite{glt16, lt19, t20}.

In 1973,  Bowen \cite{b73} introduced   Bowen  topological entropy  resembling the definition of  Hausdorff dimension for any Borel subset  $Z$ of $X$. In that paper, he proved the following three important results.
\begin{enumerate}[(i)]
 \item  When $Z=X$,  Bowen topological entropy  $h_{top}(f, X)$ coincides with the classical topological entropy.
 \item If $\mu \in M(X,f)$ and $Y\subset X$ with $\mu (Y)=1$, then the measure-theoretic entropy denoted by $h_{\mu}(f)$ is less than  or equal to the Bowen topological entropy   $h_{top}(f,Y)$.
 \item If $\mu \in E(X,f)$,  then the measure-theoretic entropy $h_{\mu}(f)$  is equal to  $h_{top}(f,G_{\mu})$, where  the set $G_\mu=\{x\in X: \lim_{n \to \infty}\frac{1}{n}\sum_{j=0}^{n-1}  \varphi(f^{j}(x))=\int \varphi  d\mu  ~ \mbox{for any }~\varphi \in C(X,\mathbb{R})\}$ denotes the  set of generic points of $\mu$.
\end{enumerate}

 In 2012, Feng and Huang \cite{fh12} introduced  measure-theoretical  upper and lower Brin-Katok local entropies  for  Borel probability measures and obtained variational principles for Bowen topological entropy and Packing topological entropy on  subsets.  Wang and Chen \cite{wc12}  showed the variational principles still holds for BS dimension and Packing BS dimension on subsets.   Following the idea of  the definition of Huasdorff dimension,  Lindenstrauss  and Tuskamoto  \cite{lt19}  introduced mean Hausdorff  dimension, which is proved  to be an upper bound of  mean dimension.  The version of mean Hausdorff dimension with potential  can be found in \cite{t20}. Later, Wang \cite{w21} introduced  Bowen upper metric mean dimension on subsets and established  an an analogous   variational principle for Bowen upper metric mean dimension on subsets.  After that,  Cheng  et al. \cite{cls21} introduced several types of   upper metric mean dimensions  with potential on arbitrary subsets through Carath\rm$\bar {e}$odory-Pesin  structures, which is an analogue of the theory of  topological pressure of non-compact, and they  also  established a  variational principle for Bowen  upper metric mean dimension with potential on  subsets under some conditions.  Inspired by the ideas used in \cite{bs00, wc12}, in this paper  we  introduce the notions of  BS  metric mean dimension and Packing BS metric mean dimension  on subsets, which allows us  to establish  Bowen's equations for Bowen  upper mean dimension and Packing upper metric mean dimension with potential on subsets. Moreover, two   variational principles for BS  metric mean dimension and Packing BS metric mean dimension  on subsets are also obtained analogous to \cite{fh12, wc12,  w21}.  Finally, we  extend Bowen's three important results to the framework of Bowen upper metric mean dimension.

 \begin{theorem}\label{thm 1.3} 
 Let $(X,f)$ be a TDS   with a  metric $d\in \mathscr{D}(X)$ and $Z$ be a non-empty subset of $X$.  Suppose that $\varphi\in C(X,\mathbb{R})$ with $\varphi>0$. Then 
 \begin{enumerate}[(i)]
    \item if $\overline{{mdim}}_{M}(f,X,d)<\infty$, then $\overline{BSmdim}_{M,Z,f}(\varphi)$ is the unique  root  of the equation $\overline{{mdim}}_{M,Z,f}(-t\varphi)=0$;
    \item if $\overline{{Pmdim}}_{M}(f,X,d)<\infty$, then $\overline{BSPmdim}_{M,Z,f}(\varphi)$  is the unique  root of the equation $\overline{{Pmdim}}_{M,Z,f}(-t\varphi)=0$,
 \end{enumerate}	
  where $\overline{{mdim}}_{M,Z,f}(-t\varphi)$, $\overline{{Pmdim}}_{M,Z,f}(-t\varphi)$
  denote Bowen upper metric mean dimension with potential $-t\varphi$ on $Z$ and  Packing upper metric mean dimension with potential $-t\varphi$ on $Z$,  respectively. $\overline{{BSmdim}}_{M,Z,f}(\varphi), \overline{{BSPmdim}}_{M,Z,f}(\varphi)$  are  respectively called BS  metric mean dimension on $Z$ with respect to $\varphi$ and   Packing BS  metric mean dimension on $Z$ with respect to $\varphi$.

  \end{theorem}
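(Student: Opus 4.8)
The plan is to derive both statements from a \emph{one-scale} form of Bowen's equation together with perturbation estimates that are uniform in the resolution. Parts (i) and (ii) are proved identically, the only change being that the Pesin-type Carath\'eodory construction behind the Bowen upper metric mean dimension is replaced by its packing analogue, so I treat (i). Put $c_1=\min_{x\in X}\varphi(x)>0$ and $c_2=\max_{x\in X}\varphi(x)$. Unravelling the definitions, for each fixed resolution $\epsilon>0$ write $P_\epsilon(-t\varphi)$ for the critical value of the parameter in the Carath\'eodory construction of $\overline{mdim}_{M,Z,f}(-t\varphi)$ at scale $\epsilon$, and $s_\epsilon(\varphi)$ for the critical value in the BS construction of $\overline{BSmdim}_{M,Z,f}(\varphi)$ at scale $\epsilon$, so that $\overline{mdim}_{M,Z,f}(-t\varphi)=\limsup_{\epsilon\to0}P_\epsilon(-t\varphi)$ and $\overline{BSmdim}_{M,Z,f}(\varphi)=\limsup_{\epsilon\to0}s_\epsilon(\varphi)$.

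First I would record, straight from the Carath\'eodory construction, the one-scale perturbation estimate
\[
P_\epsilon(-t_1\varphi)-(t_2-t_1)c_2\ \le\ P_\epsilon(-t_2\varphi)\ \le\ P_\epsilon(-t_1\varphi)-(t_2-t_1)c_1\qquad(t_1\le t_2),
\]
proved by testing a cover chosen for one potential against the other and absorbing the bounded difference $(t_2-t_1)\varphi\in[(t_2-t_1)c_1,(t_2-t_1)c_2]$ into a shift of the parameter. Taking $\limsup_{\epsilon\to0}$ shows that $t\mapsto\overline{mdim}_{M,Z,f}(-t\varphi)$ is $c_2$-Lipschitz, hence continuous, and strictly decreasing with $\overline{mdim}_{M,Z,f}(-t_1\varphi)-\overline{mdim}_{M,Z,f}(-t_2\varphi)\ge(t_2-t_1)c_1$ for $t_1\le t_2$. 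Since $Z\ne\emptyset$ we have $\overline{mdim}_{M,Z,f}(0)\ge0$; monotonicity in the underlying set and the fact that the Bowen construction on all of $X$ is dominated by the usual metric mean dimension give, with the hypothesis, $\overline{mdim}_{M,Z,f}(0)\le\overline{mdim}_M(f,X,d)<\infty$; and comparing the BS weight with $c_1$ times the orbit length yields $\overline{BSmdim}_{M,Z,f}(\varphi)\le c_1^{-1}\overline{mdim}_M(f,X,d)<\infty$. With the slope bound, $\overline{mdim}_{M,Z,f}(-t\varphi)\to-\infty$ as $t\to+\infty$, so the equation $\overline{mdim}_{M,Z,f}(-t\varphi)=0$ has a unique, finite root; this is exactly where the finiteness hypothesis is used.

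Next I would establish the \emph{one-scale Bowen equation}: for every fixed $\epsilon$, $s_\epsilon(\varphi)$ is the unique root of $t\mapsto P_\epsilon(-t\varphi)=0$, that is $P_\epsilon(-s_\epsilon(\varphi)\varphi)=0$. At a frozen scale the two Carath\'eodory structures on $Z$ differ only in that the orbit length (the weight $\mathbf{1}$) is replaced by $\varphi$, so this is the classical Barreira--Schmeling / Wang--Chen argument \cite{bs00,wc12} run verbatim; here $\varphi>0$ guarantees that the Birkhoff sums $S_n\varphi$ diverge, so the Carath\'eodory grid is well posed, and uniform continuity of $\varphi$ bounds the gap between $S_n\varphi$ at the centre of a Bowen ball and its supremum over the ball. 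Feeding $P_\epsilon(-s_\epsilon(\varphi)\varphi)=0$ into the perturbation estimate (with $t_1\le t_2$ the ordered pair $\{t,s_\epsilon(\varphi)\}$) yields, for every $\epsilon$,
\[
(s_\epsilon(\varphi)-t)\,c_1\ \le\ P_\epsilon(-t\varphi)\ \le\ (s_\epsilon(\varphi)-t)\,c_2\qquad\text{when }t\le s_\epsilon(\varphi),
\]
and the same inequalities with $c_1,c_2$ interchanged (both sides then $\le0$) when $t\ge s_\epsilon(\varphi)$. Now take $\limsup_{\epsilon\to0}$. If $t<\overline{BSmdim}_{M,Z,f}(\varphi)$, picking $\epsilon_k\to0$ with $s_{\epsilon_k}(\varphi)\to\overline{BSmdim}_{M,Z,f}(\varphi)$ gives $P_{\epsilon_k}(-t\varphi)\ge(s_{\epsilon_k}(\varphi)-t)c_1\to(\overline{BSmdim}_{M,Z,f}(\varphi)-t)c_1>0$, hence $\overline{mdim}_{M,Z,f}(-t\varphi)>0$; if $t>\overline{BSmdim}_{M,Z,f}(\varphi)$, then $s_\epsilon(\varphi)<t$ for all small $\epsilon$, so $P_\epsilon(-t\varphi)\le(s_\epsilon(\varphi)-t)c_1$ and therefore $\overline{mdim}_{M,Z,f}(-t\varphi)\le(\overline{BSmdim}_{M,Z,f}(\varphi)-t)c_1<0$. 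By the continuity established above this forces $\overline{mdim}_{M,Z,f}\bigl(-\overline{BSmdim}_{M,Z,f}(\varphi)\,\varphi\bigr)=0$, so $\overline{BSmdim}_{M,Z,f}(\varphi)$ is the root, proving (i); (ii) follows by the same three steps with the packing construction and the packing one-scale Bowen equation.

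The main difficulty is exactly the passage to $\limsup_{\epsilon\to0}$ in the last step. Both $\overline{mdim}_{M,Z,f}(-t\varphi)$ and $\overline{BSmdim}_{M,Z,f}(\varphi)$ are upper limits over resolutions, and a priori these are attained along different sequences of scales, so one cannot simply ``let $\epsilon\to0$'' in the one-scale identity $P_\epsilon(-s_\epsilon(\varphi)\varphi)=0$. What makes the transfer legitimate is that the perturbation estimate is \emph{uniform in $\epsilon$} --- the constants $c_1,c_2$ do not depend on the scale --- so the one-scale identity upgrades to a two-sided linear bound on $P_\epsilon(-t\varphi)$ in a neighbourhood of $s_\epsilon(\varphi)$, and linear sandwiches survive $\limsup$. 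A secondary technical point to be handled with care is the mismatch between the supremum over a Bowen ball that enters the Pesin-type pressure and the value at the centre that enters the BS weight; this is absorbed by the uniform continuity of $\varphi$ and leaves the critical values unchanged.
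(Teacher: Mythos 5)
Your proposal is correct and follows essentially the same route as the paper: your ``one-scale Bowen equation'' $P_\epsilon(-s_\epsilon(\varphi)\varphi)=0$ is exactly the paper's identity $M(f,d,Z,-\lambda\varphi/\log\frac{1}{\epsilon},0,N,\epsilon)=R(f,d,\varphi,Z,\lambda,N,\epsilon)$ read off at the critical parameter, and your uniform-in-$\epsilon$ perturbation estimate is the content of Proposition \ref{prop 3.6} (Lipschitz continuity, strict monotonicity and uniqueness of the root). The passage through $\limsup_{\epsilon\to 0}$ via a subsequence realizing the BS critical value on one side and an eventual inequality on the other is likewise the paper's argument, so no further comparison is needed.
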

        
  \begin{theorem} \label{thm 1.4}
  Let $(X,f)$ be a TDS and $K$ be a non-empty  compact subset of $X$.  Suppose that $\varphi\in C(X,\mathbb{R})$ with $\varphi>0$. Then for all $d\in \mathscr{D}(X)$,
        	
  \begin{align*}
  \overline{{BSmdim}}_{M,K,f}(\varphi,d )&=\limsup_{\epsilon \to 0}\frac{\sup \left\{\underline{h}_{\varphi, \mu}(f,\epsilon): \mu \in M(X), \mu (K)=1\right\}}{\log \frac{1}{\epsilon}},\\
  \overline{{BSPmdim}}_{M,K,f}(\varphi,d )&=\limsup_{\epsilon \to 0}\frac{\sup \left\{\overline{h}_{\varphi, \mu}(f,\epsilon):\mu \in M(X), \mu (K)=1\right\}}{\log \frac{1}{\epsilon}},
  \end{align*}
  where $\underline{h}_{\varphi, \mu}(f,\epsilon)$ and $ \overline{h}_{\varphi, \mu}(f,\epsilon)$ are two notions related to    the measure-theoretical  lower and upper BS  entropies of $\mu$, see  definition \ref{def 3.13}  for  their precise definitions.
  \end{theorem}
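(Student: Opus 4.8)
The plan is to adapt the machinery of Feng--Huang \cite{fh12}, Wang--Chen \cite{wc12} and Wang \cite{w21} to the present weighted Carath\'eodory--Pesin framework, proving each of the two identities through a pair of opposite inequalities. Write $B_n(x,\epsilon)=\{y\in X:\ d(f^jx,f^jy)<\epsilon,\ 0\le j\le n-1\}$ and $S_n\varphi(x)=\sum_{j=0}^{n-1}\varphi(f^jx)$; since $X$ is compact and $\varphi>0$ one has $S_n\varphi(x)\ge n\min_X\varphi\to\infty$, which will repeatedly let us discard $o(S_n\varphi)$ error terms. For a fixed scale $\epsilon>0$ let $t_\epsilon$ (resp. $t_\epsilon^{P}$) denote the critical parameter of the Carath\'eodory--Pesin outer measure on $K$ built from countable covers (resp. from the regularized packings) by Bowen $\epsilon$-balls with weights $\exp(-t\,S_{n_i}\varphi)$, so that $\overline{BSmdim}_{M,K,f}(\varphi,d)=\limsup_{\epsilon\to0}t_\epsilon/\log(1/\epsilon)$ and $\overline{BSPmdim}_{M,K,f}(\varphi,d)=\limsup_{\epsilon\to0}t_\epsilon^{P}/\log(1/\epsilon)$; likewise $\underline h_{\varphi,\mu}(f,\epsilon)$ and $\overline h_{\varphi,\mu}(f,\epsilon)$ of Definition~\ref{def 3.13} are, up to their exact form, the $\mu$-integrals of $\liminf_n\frac{-\log\mu(B_n(x,\epsilon))}{S_n\varphi(x)}$ and $\limsup_n\frac{-\log\mu(B_n(x,\epsilon))}{S_n\varphi(x)}$. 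It therefore suffices to compare, at each fixed $\epsilon$, $t_\epsilon$ (resp. $t_\epsilon^{P}$) with $\sup\{\underline h_{\varphi,\mu}(f,c\epsilon):\mu\in M(X),\ \mu(K)=1\}$ (resp. with the $\overline h$-version), for a fixed constant $c$, up to a multiplicative factor $1+o(1)$ as $\epsilon\to0$: the change of scale $c\epsilon\leftrightarrow\epsilon$ only contributes an additive $\log c$ to the denominator, which is washed out by $\limsup_{\epsilon\to0}(\cdot)/\log(1/\epsilon)$. I emphasize that the supremum is over \emph{all} Borel probabilities with $\mu(K)=1$ and not merely the $f$-invariant ones, which is what makes the construction below feasible when $K$ is not $f$-invariant.

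\smallskip
\noindent\textbf{Lower bounds on $t_\epsilon,\ t_\epsilon^{P}$ (mass distribution).} First I would fix $\mu\in M(X)$ with $\mu(K)=1$ and a number $s<\underline h_{\varphi,\mu}(f,5\epsilon)$. A level-set decomposition of $K$ yields a positive-measure Borel set $A\subseteq K$ and an $N$ with $\mu(B_n(x,5\epsilon))\le e^{-sS_n\varphi(x)}$ for all $x\in A$ and $n\ge N$. For any countable cover of $A$ by Bowen balls $B_{n_i}(y_i,\epsilon)$ with $n_i\ge N$, choose $x_i\in A\cap B_{n_i}(y_i,\epsilon)$; then $B_{n_i}(y_i,\epsilon)\subseteq B_{n_i}(x_i,2\epsilon)\subseteq B_{n_i}(x_i,5\epsilon)$ and $|S_{n_i}\varphi(y_i)-S_{n_i}\varphi(x_i)|\le n_i\,\omega_\varphi(\epsilon)\le\eta_\epsilon\,S_{n_i}\varphi(x_i)$, where $\omega_\varphi$ is the modulus of continuity of $\varphi$ and $\eta_\epsilon:=\omega_\varphi(\epsilon)/\min_X\varphi\to0$. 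Hence the Carath\'eodory sum at parameter $s(1-\eta_\epsilon)$ over this cover is $\ge\sum_i\mu(B_{n_i}(y_i,\epsilon))\ge\mu(A)>0$, so $t_\epsilon\ge s(1-\eta_\epsilon)$; letting $s\uparrow\underline h_{\varphi,\mu}(f,5\epsilon)$ and taking $\sup_\mu$ gives $t_\epsilon\ge(1-\eta_\epsilon)\sup\{\underline h_{\varphi,\mu}(f,5\epsilon):\mu(K)=1\}$. The packing bound $t_\epsilon^{P}\ge(1-\eta_\epsilon)\sup\{\overline h_{\varphi,\mu}(f,5\epsilon):\mu(K)=1\}$ is proved in the same spirit, using a $5r$-type (Vitali--Besicovitch) covering lemma to produce, at the infinitely many scales $n$ where $\mu(B_n(x,5\epsilon))$ is small, disjoint subfamilies of Bowen $\epsilon$-balls, and a countable-decomposition argument to absorb the regularization built into $t_\epsilon^{P}$. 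Dividing by $\log(1/\epsilon)$ and letting $\epsilon\to0$ then yields ``$\ge$'' in both identities (with the convention that both sides may be $+\infty$).

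\smallskip
\noindent\textbf{Upper bounds on $t_\epsilon,\ t_\epsilon^{P}$ (Frostman).} This is the substantial half. Fix $a<\overline{BSmdim}_{M,K,f}(\varphi,d)$ and a sequence $\epsilon_k\downarrow0$ with $t_{\epsilon_k}>a\log(1/\epsilon_k)=:s_k$, so the $s_k$-dimensional Carath\'eodory--Pesin outer measure of $K$ at scale $\epsilon_k$ is positive. Bowen balls do not form a nested net, so the classical Frostman lemma is not directly available; following \cite{fh12}, I would first introduce the \emph{weighted} BS metric mean dimension -- covers replaced by weighted covers $\sum_i c_i\mathbf{1}_{B_{n_i}(x_i,\epsilon)}\ge\mathbf{1}_K$ -- show that it coincides with $\overline{BSmdim}_{M,K,f}(\varphi,d)$ (equivalently, that $t_\epsilon$ equals its weighted analogue for every $\epsilon$), and then run a Frostman-type extraction within this weighted framework, using the compactness of $K$. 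This produces, for each $k$, a compact set $K_k\subseteq K$ and a measure $\mu_k\in M(X)$ with $\mu_k(K_k)=1$ and $\mu_k(B_n(x,\epsilon_k))\le C_k\exp(-s_k'S_n\varphi(x))$ for all $x\in X$ and $n\ge1$, with $s_k'$ as close to $s_k$ as wanted; since $S_n\varphi(x)\ge n\min_X\varphi\to\infty$, we get $\liminf_n\frac{-\log\mu_k(B_n(x,\epsilon_k))}{S_n\varphi(x)}\ge s_k'$ for every $x$, hence $\underline h_{\varphi,\mu_k}(f,\epsilon_k)\ge s_k'$ and $\underline h_{\varphi,\mu_k}(f,\epsilon_k)/\log(1/\epsilon_k)\ge a-o(1)$. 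Letting $k\to\infty$ gives ``$\le$'' in the first identity, and since $a$ was arbitrary the first identity follows. The packing identity is obtained by the same scheme, with one additional wrinkle: the regularized definition of $t_\epsilon^{P}$ forces a pigeonhole step -- in a countable decomposition of $K$ witnessing the regularized value, some piece carries the full packing ``size'' at scale $\epsilon_k$, and the Frostman extraction is performed on (a compact subset of) that piece, with $\overline h_{\varphi,\mu_k}$ in place of $\underline h_{\varphi,\mu_k}$.

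\smallskip
\noindent\textbf{Main obstacle.} The hard part is the Frostman step just described: proving that the weighted and unweighted BS (and packing BS) metric mean dimensions coincide at every fixed scale is an argument of independent interest that must be redone for Carath\'eodory--Pesin structures on Bowen balls rather than on Euclidean cubes, and, throughout both halves, one must keep the discrepancy between $S_n\varphi$ evaluated at a center and over a Bowen $\epsilon$-ball uniformly controlled -- $|S_n\varphi(y)-S_n\varphi(x)|\le n\,\omega_\varphi(\epsilon)=o(S_n\varphi)$ uniformly in $x$ and $n$, precisely because $\varphi\ge\min_X\varphi>0$ -- which is exactly why the comparison closes only after normalizing by $\log(1/\epsilon)$ and letting $\epsilon\to0$, and not at a fixed scale. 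Finally, as no finiteness of the mean dimensions is assumed, all of the above should first be carried out for the truncations $\min\{t_\epsilon,R\}$ and $\min\{t_\epsilon^{P},R\}$ and then $R\to\infty$, so that the identities persist (with both sides equal to $+\infty$) in the unbounded case.
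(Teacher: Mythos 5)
Your overall architecture coincides with the paper's: the lower bounds on the critical parameters come from a mass-distribution argument (the paper's Proposition 3.19 for the Bowen case, and the $5r$-covering Lemma \ref{lem 3.15} together with the reduction to positive-measure subsets of $A$ for the packing case), and the upper bound for $\overline{BSmdim}_{M,K,f}(\varphi,d)$ comes from passing to the weighted Carath\'eodory structure $\overline{Wmdim}_{M,K,f}$ (Proposition 3.17) and then applying the BS Frostman Lemma \ref{lem 3.18}, exactly as you propose. Your handling of the $S_{n}\varphi$-discrepancy over a Bowen ball via $n\,\omega_\varphi(\epsilon)\le \eta_\epsilon S_n\varphi(x)$ and the final normalization by $\log\frac{1}{\epsilon}$ also matches the paper. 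Three of your four inequalities are therefore sound and follow the paper's route.

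The gap is in the upper bound for $\overline{BSPmdim}_{M,K,f}(\varphi,d)$. You propose to run ``the same scheme'' --- a pigeonhole over the countable decomposition followed by ``the Frostman extraction \dots with $\overline h_{\varphi,\mu_k}$ in place of $\underline h_{\varphi,\mu_k}$.'' Taken literally this cannot work. The weighted-cover Frostman lemma produces a measure satisfying $\mu(B_n(x,\epsilon))\le C e^{-s S_n\varphi(x)}$ for \emph{all} $n\ge N$, and any such measure automatically has $\underline h_{\varphi,\mu}(f,\epsilon)\ge s$; so if this extraction were available whenever $\mathcal P_p(f,d,\varphi,K,s,\epsilon)>0$, combining it with your first identity would yield $\overline{BSPmdim}_{M,K,f}(\varphi)\le\overline{BSmdim}_{M,K,f}(\varphi)$, i.e.\ the two quantities would always coincide --- which is false in general (only the reverse inequality of Remark 3.10(i) holds, and the packing and covering quantities genuinely differ for non-compact, non-invariant sets already in the entropy setting of Feng--Huang). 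What is actually needed, and what the paper invokes as \cite[Theorem 3.12, Part 2]{wc12}, is the weaker ``inverse Frostman for packings'': a measure $\mu$ with $\mu(K)=1$ and a constant $C$ such that for each $x\in K$ the bound $\mu(B_{n_i}(x,\epsilon))\le C e^{-sS_{n_i}\varphi(x)}$ holds only along an $x$-dependent subsequence $n_i(x)$, which still forces $\overline h_{\varphi,\mu}(f,\epsilon)\ge s$ but not $\underline h_{\varphi,\mu}(f,\epsilon)\ge s$. Its proof is not a Frostman extraction on weighted covers but an inductive construction of finite separated sets at increasing orders whose weighted counting measures converge to $\mu$. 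This construction has to be supplied (or cited) separately; your proposal does not contain it, and the covering-type Frostman machinery you describe cannot replace it.
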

        
  \begin{theorem}\label{thm 1.5}
   Let $(X,f)$ be a TDS  with a  metric $d\in \mathscr{D}(X)$, then the following statements hold. 
        	
   \begin{enumerate}[(i)]
   \item Suppose that $\mu \in M(X,f)$. If $Y\subset X$ and $\mu(Y)=1$, then 
   $$\limsup_{\epsilon \to 0} \frac{\underline h_{\mu}^{BK}(f,d,\epsilon)}{\log \frac{1}{\epsilon}}\leq \overline{{mdim}}_{M}(f,Y,d).$$
   \item Suppose that $\mu \in E(X,f)$. If  $\limsup_{\epsilon \to 0}\limits  \frac{\overline h_{\mu}^{BK}(f,d,\epsilon)}{\log \frac{1}{\epsilon}}=\limsup_{\epsilon \to 0}\limits \frac{\underline h_{\mu}^{BK}(f,d,\epsilon)}{\log \frac{1}{\epsilon}}$, then
   \begin{align*}
    \overline{{mdim}}_{M}(f,G_\mu,d)&=\limsup_{\epsilon \to 0}\frac{PS(f,d,\mu,\epsilon)}{\log \frac{1}{\epsilon}}\\
    &=\limsup_{\epsilon \to 0}\frac{\overline h^K_{\mu}(f,d,\epsilon)}{\log \frac{1}{\epsilon}}\\
    &=\limsup_{\epsilon \to 0}\frac{ \overline h_{\mu}^{BK}(f,d,\epsilon)}{\log \frac{1}{\epsilon}}\\
    &=\limsup_{\epsilon \to 0}\frac{ \underline h_{\mu}^{BK}(f,d,\epsilon)}{\log \frac{1}{\epsilon}}=\overline{rdim}_{L^{\infty}}(X,f,d,\mu).
   \end{align*}
        		
   \end{enumerate}	
 All  notions mentioned  in Theorem 1.5 are explicated	 in Subsection \ref{sub 3.4}.
 \end{theorem}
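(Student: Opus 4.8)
The plan is to prove (i) by a Frostman-type entropy-distribution argument run at each fixed scale $\epsilon$, and then to derive (ii) by applying (i) to $G_\mu$ for the lower bound, a Bowen-type covering argument for the upper bound, and a squeeze that collapses the whole chain once the hypothesis of (ii) is used. For (i), write $\underline h_\mu^{BK}(f,d,\epsilon)(x)=\liminf_{n\to\infty}-\frac{1}{n}\log\mu(B_n(x,\epsilon))$ for the pointwise lower Brin-Katok local entropy, so that $\underline h_\mu^{BK}(f,d,\epsilon)$ is its $\mu$-average. Fix $\epsilon>0$ and $\delta>0$. Since $\mu(Y)=1$, the set $Y_\delta=\{x\in Y:\underline h_\mu^{BK}(f,d,\epsilon)(x)>\underline h_\mu^{BK}(f,d,\epsilon)-\delta\}$ has positive $\mu$-measure (otherwise the pointwise entropy would be $\le\underline h_\mu^{BK}(f,d,\epsilon)-\delta$ $\mu$-a.e., contradicting the definition of its average). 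Every $x\in Y_\delta$ satisfies $\liminf_n-\frac{1}{n}\log\mu(B_n(x,\epsilon))\ge\underline h_\mu^{BK}(f,d,\epsilon)-\delta$, so the entropy-distribution principle of Feng and Huang \cite{fh12} (cover $Y_\delta$ by Bowen $\epsilon$-balls, compare $\mu$-masses with the defining weights $e^{-sn_i}$, up to the customary harmless doubling of the scale when one centres the covering balls on $Y_\delta$) forces the scale-$\epsilon$ Bowen entropy of $Y$ underlying $\overline{mdim}_M(f,\cdot,d)$ to be at least $\underline h_\mu^{BK}(f,d,\epsilon)-\delta$. Letting $\delta\to0$, dividing by $\log\frac{1}{\epsilon}$ and taking $\limsup_{\epsilon\to0}$ yields (i).

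For (ii), the lower bound $\overline{mdim}_M(f,G_\mu,d)\ge\limsup_{\epsilon\to0}\frac{\underline h_\mu^{BK}(f,d,\epsilon)}{\log\frac{1}{\epsilon}}$ follows from (i) applied with $Y=G_\mu$, using that ergodicity together with Birkhoff's ergodic theorem and the separability of $C(X,\mathbb{R})$ gives $\mu(G_\mu)=1$. For the upper bound one imitates Bowen's proof that $h_{top}^{B}(f,G_\mu)\le h_\mu(f)$, keeping track of the scale: fixing $\gamma\in(0,1)$, for all large $n$ a near-minimal family of Bowen $\epsilon$-balls whose union has $\mu$-measure $>1-\gamma$ has cardinality at most $\exp\!\big(n(\overline h^K_\mu(f,d,\epsilon)+\gamma)\big)$, while every $x\in G_\mu$ has, for all large $n$, an orbit segment whose empirical measure is $\gamma$-close to $\mu$ and hence spends all but a $\gamma$-proportion of its time in the good set; decomposing $G_\mu$ according to the atypical times and counting the covering Bowen balls piece by piece, one obtains, after passing through the intermediate quantity $PS(f,d,\mu,\epsilon)$ of Subsection \ref{sub 3.4}, the chain
\[
\overline{mdim}_M(f,G_\mu,d)\le\limsup_{\epsilon\to0}\frac{PS(f,d,\mu,\epsilon)}{\log\frac{1}{\epsilon}}\le\limsup_{\epsilon\to0}\frac{\overline h^K_\mu(f,d,\epsilon)}{\log\frac{1}{\epsilon}}\le\limsup_{\epsilon\to0}\frac{\overline h_\mu^{BK}(f,d,\epsilon)}{\log\frac{1}{\epsilon}},
\]
the last step being the fixed-scale comparison of Katok covering numbers with the upper Brin-Katok local entropy.

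By the hypothesis of (ii) the right-hand side of the chain equals $\limsup_{\epsilon\to0}\frac{\underline h_\mu^{BK}(f,d,\epsilon)}{\log\frac{1}{\epsilon}}$, which by the lower bound is $\le\overline{mdim}_M(f,G_\mu,d)$; hence every term in the chain down through $\underline h_\mu^{BK}$ coincides. The remaining equality with $\overline{rdim}_{L^\infty}(X,f,d,\mu)$ then comes from unwinding the definition of the $L^\infty$ rate-distortion dimension: for ergodic $\mu$ the rate-distortion function at distortion level $\epsilon$ is sandwiched between Katok and Brin-Katok covering exponents at comparable scales, so after normalising by $\log\frac{1}{\epsilon}$ it equals the common value found above. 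This closes all five equalities of (ii).

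The step I expect to be the main obstacle is the upper-bound covering argument, together with the fixed-scale comparisons it relies on. One has to arrange the scale bookkeeping (Bowen balls of radius $\epsilon$ versus $2\epsilon$, and the correction $c(\gamma)\to0$ produced by the atypical coordinates) so that it is absorbed after dividing by $\log\frac{1}{\epsilon}$ and letting $\epsilon\to0$, and one has to verify that the normalised Katok quantity does not depend on the auxiliary parameter $\gamma$. Equivalently, the crux is establishing the fixed-scale inequalities linking $PS(f,d,\mu,\epsilon)$, $\overline h^K_\mu(f,d,\epsilon)$, $\overline h_\mu^{BK}(f,d,\epsilon)$ and the $L^\infty$ rate-distortion function with only benign perturbations of the scale; once these, part (i), and $\mu(G_\mu)=1$ are in hand, the remaining equalities follow by a soft squeeze using the hypothesis.
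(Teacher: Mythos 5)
Your proposal is correct and takes essentially the same route as the paper: part (i) is a fixed-scale mass-distribution argument (the paper realizes it by exhausting $Y$ by compacta and invoking the Frostman-type Proposition 3.19 from Wang--Chen, whereas you apply the Feng--Huang entropy distribution principle directly to a positive-measure level set of the pointwise lower Brin--Katok entropy -- the same idea), and part (ii) is exactly the paper's chain $\overline{mdim}_{M}(f,G_\mu,d,12\epsilon)\le PS(f,d,\mu,12\epsilon)\le R_{L^{\infty}}(d,\mu,2\epsilon)\le \overline h^K_{\mu}(f,d,2\epsilon)\le \overline h_{\mu}^{BK}(f,d,\epsilon)$ closed by a squeeze against part (i) under the stated hypothesis. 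The only cosmetic discrepancy is that your fixed-scale inequality ``$PS\le\overline h^K_{\mu}$'' really goes in the other direction at a single scale and only holds for the normalized limsups after routing through $R_{L^{\infty}}$ with the $\epsilon/6$ scale change, a point you acknowledge in your final paragraph.
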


  The rest of this paper is organized as follows. In  section 2, we  introduce the notion of  induced metric mean dimension with potential in subsection 2.1,  and  we prove Theorem 1.1 and Theorem 1.2 in subsection 2.2.   The section 3 is divided into four parts.  In subsection 3.1, we  recall some basic definitions of  upper metric mean dimension with potential  on subsets and collect some standard facts.  Theorem 1.3 and Theorem 1.4 are proved in subsection 3.2 and subsection 3.3, respectively. We give the  proof of  Theorem 1.5  in subsection 3.4.

	
	\section{The   upper metric mean dimension with potential on the whole phase space} 
	 In section  2, we focus on the  upper metric mean dimension with potential on the whole space.  We introduce induced upper metric mean dimension with potential on the whole phase space in subsection 2.1, and we  major the Bowen's equation for upper metric mean dimension with potential on the whole space in subsection 2.2.
	\subsection{Induced upper metric mean dimension with potential}
     In this subsection,  we  present some useful notions associated with upper  metric mean dimension with potential and then  introduce the notion of induced  upper metric mean dimension with potential.
      
     Let $n\in \mathbb{N}$. For $x,y \in X$, we define the $n$-th Bowen metric $d_n$  on $X$ as 
    $$d_n(x,y)=\max_{0\leq j\leq n-1}d(f^{j}(x),f^j(y)).$$
    For each $\epsilon >0$, the \emph{Bowen open  ball and closed ball} of radius $\epsilon$ and  order $n$ in the metric $d_n$ around $x$ are respectively given by 
    $$B_n(x,\epsilon)=\{y\in X: d_n(x,y)<\epsilon\},$$
    $$\overline B_n(x,\epsilon)=\{y\in X:d_n(x,y)\leq\epsilon\}.$$
    For a non-empty subset  $Z\subset X$, one says that a set $E$ is  \emph{an $(n,\epsilon)$-\emph {spanning set} of $Z$} if  for any $x \in Z$, there  exists  $y\in E$ such that $d_n(x,y)<\epsilon.$ The smallest  cardinality  of $(n,\epsilon)$-spanning set of $Z$ is denoted by $r_n(f,d,\epsilon,Z)$. One says that a set $F\subset Z$ is  \emph{an $(n,\epsilon)$-separated set of $Z$} if $d_n(x,y)\geq\epsilon$ for any $x,y \in F$ with $x\not= y$. The maximal  cardinality of  $(n,\epsilon)$-separated set of $Z$    is denoted by $s_n(f,d,\epsilon,Z)$.

    Let $\varphi,\psi \in C(X,\mathbb{R})$ with $\psi>0$. For all  $n \geq1, x\in X$,  we set $S_n\varphi(x):=\sum_{i=0}^{n-1} \limits \varphi(f^{i}(x))$ and  $m:=\min_{x\in X}\limits\psi(x)>0$. 
   We now recall that the equivalent  definition of upper metric mean dimension with potential  defined by  separated set in \cite{t20}.
    
    Let $0<\epsilon<1$, $d\in \mathscr{D}(X)$, and $\varphi \in C(X,\mathbb{R})$. Set
    $$\#_{sep}(X,d_n, S_n\varphi, \epsilon)=\sup\{\sum_{x\in F_n}(1/\epsilon)^{S_n\varphi(x)}: F_n \mbox{ is an}~ (n,\epsilon)\mbox{-separated set  of} ~X\},$$
    and
    $$P(X,f,d,\varphi,\epsilon)=\limsup_{n \to \infty}\frac{\log \#_{sep}(X,d_n, S_n\varphi, \epsilon)}{n}.$$
     \emph{Upper metric mean dimension with potential  $\varphi$} is given by   
    $$\overline{{mdim}}_M(X,f,d,\varphi)=\limsup_{\epsilon \to 0}\frac{P(X,f,d,\varphi,\epsilon)}{\log \frac{1}{\epsilon}}.$$
    
    Specially, $\overline{mdim}_M(X,f,d)=\overline{{mdim}}_M(X,f,d,0)$ recovers the definition of   the upper metric mean dimension of $X$ given by Lindenstrauss and Weiss in  \cite{lw00}. 

\begin{definition}
	Let $(X,f)$ be a  TDS  with a  metric $d \in \mathscr{D}(X)$ and $\varphi,\psi \in C(X,\mathbb{R})$ with $\psi>0$.  For $T>0$, set\\
	$$	S_T:=\{n\in \mathbb{N}: \exists x \in X ~\mbox{such that } S_n\psi(x)\leq T~ \mbox{and}~ S_{n+1}\psi(x)>T\}.$$
	For  each $n\in S_T$ and   $\epsilon >0$, put
	$$X_n=\{x\in X:  S_n\psi(x)\leq T~ \mbox{and}~ S_{n+1}\psi(x)>T\},$$
	\begin{align*}
		P_{\psi, T}(X,f,d,\varphi,\epsilon)
		= \sup\left\{\sum_{n\in S_T}\sum_{x \in F_n}\limits(1/\epsilon)^ {S_n\varphi(x)}: F_n \mbox{ is an}~ (n,\epsilon)\mbox{-separated set  of} ~X_n, n \in S_T \right\}.
	\end{align*}
	We define the \emph{$\psi$-induced  upper metric mean dimension with potential $\varphi$} as  $$\overline{{mdim}}_{M,\psi}(X,f,d,\varphi)=\limsup_{\epsilon \to 0}\limsup_{T \to \infty}\frac{1}{T\log{\frac{1}{\epsilon}}}\log P_{\psi, T}(X,f,d,\varphi,\epsilon).$$
\end{definition}                                         
\begin{remark}
	\begin {enumerate}[(i)]
	\item  If $S_T\not=\emptyset$, then for each $n\in S_T$,  we have $n\leq [\frac{T}{m}] +1$, where $ [\frac{T}{m}] $ denotes the integer part of $ \frac{T}{m}$ and $m=\min_{x\in X}\limits\psi(x)$. In other words,  $S_T$ is a finite set.
	\item Take $\psi =1$, then the $\psi$-induced  upper metric mean dimension with potential $\varphi$ is reduced to the upper metric  mean dimension with potential $\varphi$, that is, $\overline{{mdim}}_{M,1}(X,f,d,\\
	\varphi)=\overline{{mdim}}_M(X,f,d,\varphi).$
	\item $ \overline{{mdim}}_{M,\psi}(X,f,d,\varphi) >-\infty.$
\end{enumerate}
\end{remark}

In fact, analogous to the definition of  the  classical topological pressure, the  $\psi$-induced  upper metric mean dimension with potential $\varphi$ can be  also  given by spanning set.
\begin{proposition}\label{poro1.4}
	Let $(X,f)$ be a  TDS  with a  metric $d \in \mathscr{D}(X)$ and $\varphi,\psi \in C(X,\mathbb{R})$ with $\psi>0$. 
	Set
	$$Q_{\psi, T}(X,f,d,\varphi,\epsilon)
	= \inf\left\{\sum_{n\in S_T}\sum_{x \in E_n}\limits(1/\epsilon)^ {S_n\varphi(x)}: E_n \mbox{ is an}~ (n,\epsilon)\mbox{-spanning set  of} ~X_n, n \in S_T \right\}.$$
	Then $$\overline{{mdim}}_{M,\psi}(X,f,d,\varphi)=\limsup_{\epsilon \to 0}\limsup_{T \to \infty}\frac{1}{T\log{\frac{1}{\epsilon}}}\log Q_{\psi, T}(X,f,d,\varphi,\epsilon).$$ 
\end{proposition}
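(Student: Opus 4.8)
The plan is to run the classical comparison between $(n,\epsilon)$-separated and $(n,\epsilon)$-spanning sets --- the one behind the equivalence of the separated- and spanning-set definitions of topological pressure --- uniformly over the finitely many blocks $X_n$, $n\in S_T$, while keeping careful track of the $\epsilon$-dependence of the weights $(1/\epsilon)^{S_n\varphi}$. Throughout put $m=\min_X\psi>0$, $\|\varphi\|=\max_X|\varphi|$, and $\gamma(\epsilon)=\sup\{|\varphi(a)-\varphi(b)|:d(a,b)\le\epsilon\}$, so that $\gamma(\epsilon)\to0$ as $\epsilon\to0$ by uniform continuity of $\varphi$ on the compact space $X$. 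Recall from Remark 2.3(i) that $S_T$ is finite and every $n\in S_T$ obeys $n\le T/m+1$; in particular $|S_n\varphi(x)|\le(T/m+1)\|\varphi\|$ for all $x\in X$ and $n\in S_T$, and $X_n\neq\emptyset$ for $n\in S_T$, so all spanning sets below are non-empty.

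For the inequality ``$\ge$'' in the statement, for each $n\in S_T$ I would pick a maximal $(n,\epsilon)$-separated subset $F_n\subset X_n$ (finite, since $X$ is compact); by maximality it is also an $(n,\epsilon)$-spanning set of $X_n$. Using the family $\{F_n\}_{n\in S_T}$ first as a competitor in the infimum defining $Q_{\psi,T}$ and then as a competitor in the supremum defining $P_{\psi,T}$ yields $Q_{\psi,T}(X,f,d,\varphi,\epsilon)\le P_{\psi,T}(X,f,d,\varphi,\epsilon)$; dividing by $T\log(1/\epsilon)>0$ and letting $T\to\infty$ and then $\epsilon\to0$ gives ``$\ge$''.

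For ``$\le$'', fix $\epsilon\in(0,1/2)$ and $T>0$. Given an $(n,2\epsilon)$-separated set $F_n\subset X_n$ and an $(n,\epsilon)$-spanning set $E_n$ of $X_n$, choose for each $x\in F_n$ a point $\pi_n(x)\in E_n$ with $d_n(x,\pi_n(x))<\epsilon$; the map $\pi_n\colon F_n\to E_n$ is injective because $F_n$ is $(n,2\epsilon)$-separated, and $d(f^{j}x,f^{j}\pi_n(x))<\epsilon$ for $0\le j\le n-1$ forces $|S_n\varphi(x)-S_n\varphi(\pi_n(x))|\le n\gamma(\epsilon)\le(T/m+1)\gamma(\epsilon)$. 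Since $\log(1/\epsilon)>0$ and the summands are positive, injectivity gives $\sum_{x\in F_n}(1/\epsilon)^{S_n\varphi(x)}\le(1/\epsilon)^{(T/m+1)\gamma(\epsilon)}\sum_{y\in E_n}(1/\epsilon)^{S_n\varphi(y)}$. Summing over $n\in S_T$, taking the supremum over the families $\{F_n\}$ and the infimum over the families $\{E_n\}$, and comparing weights via $(1/(2\epsilon))^{S_n\varphi(x)}\le 2^{(T/m+1)\|\varphi\|}(1/\epsilon)^{S_n\varphi(x)}$ (valid for $n\in S_T$, which absorbs the change of scale from $2\epsilon$ to $\epsilon$), one gets
$$P_{\psi,T}(X,f,d,\varphi,2\epsilon)\le 2^{(T/m+1)\|\varphi\|}\,(1/\epsilon)^{(T/m+1)\gamma(\epsilon)}\,Q_{\psi,T}(X,f,d,\varphi,\epsilon).$$
Taking logarithms, dividing by $T\log(1/(2\epsilon))>0$, letting $T\to\infty$ and then $\epsilon\to0$: the two error exponents contribute $\frac{\|\varphi\|\log2}{m\log(1/(2\epsilon))}\to0$ and $\frac{\gamma(\epsilon)\log(1/\epsilon)}{m\log(1/(2\epsilon))}\to0$, while $\frac{\log(1/\epsilon)}{\log(1/(2\epsilon))}\to1$, so that after the reparametrization $\delta=2\epsilon$ one obtains precisely $\overline{mdim}_{M,\psi}(X,f,d,\varphi)\le\limsup_{\epsilon\to0}\limsup_{T\to\infty}\frac{1}{T\log(1/\epsilon)}\log Q_{\psi,T}(X,f,d,\varphi,\epsilon)$, which together with the previous paragraph proves the proposition. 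To split the inner and outer limits here one needs $\limsup_{T\to\infty}\frac{1}{T\log(1/\epsilon)}\log Q_{\psi,T}(X,f,d,\varphi,\epsilon)$ to be a finite real number for each fixed $\epsilon\in(0,1)$; this follows from the crude estimates $\epsilon^{(T/m+1)\|\varphi\|}\le Q_{\psi,T}\le P_{\psi,T}\le(T/m+1)\,\bigl(r_1(f,d,\epsilon/2,X)\bigr)^{T/m+1}(1/\epsilon)^{(T/m+1)\|\varphi\|}$, the last one because an $(n,\epsilon)$-separated subset of $X$ has at most $\bigl(r_1(f,d,\epsilon/2,X)\bigr)^{n}$ points.

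Conceptually there is nothing subtle; the one place that needs attention is the bookkeeping just described. Because the weights $(1/\epsilon)^{S_n\varphi}$ depend on the scale, comparing $P_{\psi,T}$ at scale $2\epsilon$ with $Q_{\psi,T}$ at scale $\epsilon$ forces in the factor $2^{(T/m+1)\|\varphi\|}$, which is exponentially large in $T$ for fixed $\epsilon$; the point is that after division by $T\log(1/\epsilon)$ it becomes $O(1/\log(1/\epsilon))$ and hence vanishes only in the outer limit $\epsilon\to0$, never in the inner limit $T\to\infty$ --- and the same is true of $\gamma(\epsilon)$. Everything else (finiteness and non-emptiness of the separated and spanning sets involved, and the legitimacy of interchanging the two limsups) is a routine consequence of the compactness of $X$ and of the bound $n\le T/m+1$ on $S_T$ from Remark 2.3(i).
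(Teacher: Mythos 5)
Your proposal is correct and follows essentially the same route as the paper's proof: the easy inequality comes from the fact that a maximal $(n,\epsilon)$-separated subset of $X_n$ is $(n,\epsilon)$-spanning (giving $Q_{\psi,T}\le P_{\psi,T}$), and the reverse inequality comes from the injective map from a separated set into a spanning set at half the scale, with the error factors $(1/\epsilon)^{(T/m+1)\gamma(\epsilon)}$ and $2^{(T/m+1)\|\varphi\|}$ absorbed only in the outer limit $\epsilon\to0$. Your extra remarks on the finiteness of $\limsup_{T\to\infty}\frac{1}{T\log(1/\epsilon)}\log Q_{\psi,T}$ and on the scale mismatch $\log(1/\epsilon)$ versus $\log(1/(2\epsilon))$ are points the paper passes over silently, but they do not change the argument.
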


\begin{proof}
	Let $0<\epsilon<1$,  $n\in S_T$. Note that a  maximal $(n,\epsilon)$-separated set $F_n$  of $X_n$ is also an $(n,\epsilon)$-spanning set of $X_n$. Then 
	$$Q_{\psi, T}(X,f,d,\varphi,\epsilon)\leq P_{\psi, T}(X,f,d,\varphi,\epsilon).$$
	Therefore,
	$$\limsup_{\epsilon \to 0}\limsup_{T \to \infty}\frac{1}{T\log{\frac{1}{\epsilon}}}\log Q_{\psi, T}(X,f,d,\varphi,\epsilon)\leq \overline{{mdim}}_{M,\psi}(X,f,d,\varphi).$$
	
	On the other hand, let $0<\epsilon<1$ and $\gamma(\epsilon):=\sup\{|\varphi(x)-\varphi(y)|:d(x,y)<\epsilon\}$. For $n\in S_T$, let $E_n$ be an $(n,\frac{\epsilon}{2})$-spanning set of $X_n$ and  $F_n$ be an $(n,\epsilon)$-separated set of $X_n$. Consider a map $\Phi: F_n \rightarrow E_n$ by assigning each $x\in F_n$ to $\Phi(x)\in E_n$ satisfying $d_n(x,\Phi(x))<\frac{\epsilon}{2}$. Then $\Phi$ is injective.
	
	Thus 
	\begin{align*}
		&\sum_{n\in S_T}\sum_{y \in E_n}\limits(2/\epsilon)^ {S_n\varphi(y)}\\
		\geq&\sum_{n\in S_T}\sum_{x \in F_n}\limits(2/\epsilon)^ {S_n\varphi(\Phi(x))}\\
		=&\sum_{n\in S_T}\sum_{x \in F_n}\limits(2/\epsilon)^ {S_n\varphi(\Phi(x))-S_n\varphi(x)+S_n\varphi(x)}\\
		\geq&(2/\epsilon)^{-\gamma(\epsilon)(\frac{T}{m}+1)}\sum_{n\in S_T}\sum_{x \in F_n}\limits(1/\epsilon)^ {S_n\varphi(x)}\cdot2^{S_n \varphi(x)}\\
		\geq&(2/\epsilon)^{-\gamma(\epsilon)(\frac{T}{m}+1)}\cdot2^{-(\frac{T}{m}+1)||\varphi||}\sum_{n\in S_T}\sum_{x \in F_n}\limits(1/\epsilon)^ {S_n\varphi(x)}
	\end{align*}
	It follows that  
	$$\limsup_{T \to \infty}\frac{1}{T}\log Q_{\psi, T}(X,f,d,\varphi,\frac{\epsilon}{2})\geq -\frac{1}{m} \gamma(\epsilon)\log \frac{2}{\epsilon}-\frac{||\varphi||}{m}\log2 +\limsup_{T \to \infty}\frac{1}{T}\log P_{\psi, T}(X,f,d,\varphi,\epsilon) .$$
	Since $\epsilon \to 0$, $\gamma(\epsilon) \to 0$, we finally deduce that 
	$$\limsup_{\epsilon \to 0}\limsup_{T \to \infty}\frac{1}{T\log{\frac{1}{\epsilon}}}\log Q_{\psi, T}(X,f,d,\varphi,\epsilon)\geq \overline{{mdim}}_{M,\psi}(X,f,d,\varphi).$$
	
\end{proof}

	\subsection{Bowen's equation for upper metric mean dimension with potential on the whole phase space}
	In this subsection, we prove Theorem 1.1 and Theorem 1.2. To this end, we  need to   examine the relationship between $\overline{{mdim}}_M(X,f,d,\varphi)$ and  $\overline{{mdim}}_{M,\psi}(X,f,d,\varphi)$, which will be useful  for the  forthcoming proof.
	
	\begin{theorem}\label{thm 2.4}
		 Let $(X,f)$ be a  TDS with a  metric $d \in \mathscr{D}(X)$   and $\varphi,\psi \in C(X,\mathbb{R})$ with $\psi>0$.  For $T>0$, define\\
		$$G_T:=\{n\in \mathbb{N}: \exists x \in X ~\mbox{such that } S_n\psi(x)>T\}.$$
		For  each $n\in G_T$ and   $\epsilon >0$, define
		$$Y_n=\{x\in X: S_n\psi(x)>T\},$$
		\begin{align*}
			R_{\psi, T}(X,f,d,\varphi,\epsilon)
			= \sup\left\{\sum_{n\in G_T}\sum_{x \in F_n^{'}}\limits(1/\epsilon)^ {S_n\varphi(x)}:F_n ^{'}\mbox{ is an}~ (n,\epsilon)\mbox{-separated set  of} ~Y_n, n \in G_T \right\}.
		\end{align*}
	Then
	\begin{align}\label{inequ 2.1}
			\overline{{mdim}}_{M,\psi}(X,f,d,\varphi)=\inf\{\beta \in \mathbb{R}: \limsup_{\epsilon \to 0}\limsup_{T \to \infty}R_{\psi, T}(X,f,d,\varphi-\beta \psi,\epsilon)<\infty\}.
	\end{align}

	We use the convention that $\inf\emptyset=\infty$.
  \end{theorem}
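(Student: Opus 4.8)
Write $\beta_0$ for the right-hand side of \eqref{inequ 2.1} and $s:=\overline{{mdim}}_{M,\psi}(X,f,d,\varphi)$; the plan is to prove $\beta_0\le s$ and $\beta_0\ge s$. For $u>0$ I write $S_u,X_n(u),G_u,Y_n(u),P_{\psi,u},R_{\psi,u}$ for the objects of the preceding Definition and of the present theorem with $T$ replaced by $u$ (the index sets and the sets $X_n(u),Y_n(u)$ depend only on $\psi$). First I record the covariance identity $\overline{{mdim}}_{M,\psi}(X,f,d,\varphi-\beta\psi)=\overline{{mdim}}_{M,\psi}(X,f,d,\varphi)-\beta$ for all $\beta\in\mathbb{R}$ (convention $\infty-\beta=\infty$). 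Indeed, on $X_n$ one has $S_n\psi(x)\in(T-\|\psi\|,T]$, so for every $(n,\epsilon)$-separated $F_n\subset X_n$,
$$(1/\epsilon)^{-\beta T}\epsilon^{|\beta|\|\psi\|}\sum_{x\in F_n}(1/\epsilon)^{S_n\varphi(x)}\le\sum_{x\in F_n}(1/\epsilon)^{S_n(\varphi-\beta\psi)(x)}\le(1/\epsilon)^{-\beta T}\epsilon^{-|\beta|\|\psi\|}\sum_{x\in F_n}(1/\epsilon)^{S_n\varphi(x)};$$
summing over $n\in S_T$ and then taking $\tfrac1{T\log(1/\epsilon)}\log(\cdot)$, $T\to\infty$, $\epsilon\to0$ gives the identity (the error terms $\pm|\beta|\|\psi\|/T$ vanish). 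In particular $\overline{{mdim}}_{M,\psi}(\varphi-\beta\psi)\le0\iff\beta\ge s$, so it suffices to prove the two inequalities between $\beta_0$ and $s$.

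For $\beta_0\le s$ — the heart of the argument — I claim the re-parametrisation bound
$$R_{\psi,T}(X,f,d,\varphi-\beta\psi,\epsilon)\le\frac{C_\beta(\epsilon)}{m}\int_T^{\infty}\epsilon^{\beta u}\,P_{\psi,u}(X,f,d,\varphi,\epsilon)\,du,\qquad C_\beta(\epsilon):=\max\{1,\epsilon^{-\beta\|\psi\|}\},$$
for all $0<\epsilon<1$, $T>0$, $\beta\in\mathbb{R}$. To prove it, fix a competitor family $\{F_n'\}_{n\in G_T}$. For $x\in F_n'\subset Y_n$ one has $x\in X_n(u)$ exactly for $u\in[S_n\psi(x),S_{n+1}\psi(x))\subset(T,\infty)$, an interval of length $\psi(f^nx)\in[m,\|\psi\|]$; estimating $\int_{S_n\psi(x)}^{S_{n+1}\psi(x)}\epsilon^{\beta u}\,du$ from below by the monotonicity of $u\mapsto\epsilon^{\beta u}$ and this length bound yields $\epsilon^{\beta S_n\psi(x)}\le\tfrac{C_\beta(\epsilon)}{m}\int_T^{\infty}\mathbf{1}_{[x\in X_n(u)]}\epsilon^{\beta u}\,du$. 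Multiplying by $(1/\epsilon)^{S_n\varphi(x)}$, summing over $x\in F_n'$ and $n\in G_T$, and interchanging sum and integral (Tonelli), the inner sum $\sum_n\sum_{x\in F_n'\cap X_n(u)}(1/\epsilon)^{S_n\varphi(x)}$ is $\le P_{\psi,u}(\varphi,\epsilon)$, since each $F_n'\cap X_n(u)$ is an $(n,\epsilon)$-separated subset of $X_n(u)$, the finitely many $n$ with nonempty intersection lie in $S_u$, and the family extends by $\emptyset$ to a competitor for $P_{\psi,u}$; taking the supremum over $\{F_n'\}$ proves the bound. Now fix $\beta>s$ (if $s=\infty$ there is nothing to prove) and put $\delta:=(\beta-s)/2>0$. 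By definition of $s$, for all small $\epsilon$ there is $U_0(\epsilon)$ with $P_{\psi,u}(\varphi,\epsilon)\le(1/\epsilon)^{(s+\delta)u}$ for $u\ge U_0(\epsilon)$, whereas on $[0,U_0(\epsilon)]$ the quantity $P_{\psi,u}(\varphi,\epsilon)$ is bounded (there only finitely many $n$ lie in $S_u$ and each $\#_{sep}(X,d_n,S_n\varphi,\epsilon)<\infty$). Since the tail integrand is $\le\epsilon^{((\beta-s)/2)u}$, the integral $\int_0^{\infty}\epsilon^{\beta u}P_{\psi,u}(\varphi,\epsilon)\,du$ converges for all small $\epsilon$, hence its tail $\int_T^\infty$ tends to $0$ as $T\to\infty$; therefore $\limsup_{T\to\infty}R_{\psi,T}(\varphi-\beta\psi,\epsilon)=0$ for all small $\epsilon$, so $\limsup_{\epsilon\to0}\limsup_{T\to\infty}R_{\psi,T}(\varphi-\beta\psi,\epsilon)=0<\infty$. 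Thus every $\beta>s$ lies in the set defining $\beta_0$, i.e. $\beta_0\le s$.

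For $\beta_0\ge s$: using $X_n\subset Y_{n+1}$ and $d_{n+1}\ge d_n$, any $(n,\epsilon)$-separated $F_n\subset X_n$ is an $(n+1,\epsilon)$-separated subset of $Y_{n+1}$, and $n\mapsto n+1$ embeds $S_T$ into $G_T$; combined with $|S_{n+1}(\varphi-\beta\psi)(x)-S_n(\varphi-\beta\psi)(x)|\le\|\varphi-\beta\psi\|=:M_\beta$ this gives $P_{\psi,T}(\varphi-\beta\psi,\epsilon)\le\epsilon^{-M_\beta}R_{\psi,T}(\varphi-\beta\psi,\epsilon)$. Hence, if $\beta$ lies in the set defining $\beta_0$, then $\limsup_TR_{\psi,T}(\varphi-\beta\psi,\epsilon)<\infty$ for small $\epsilon$, so $\limsup_T\tfrac1{T\log(1/\epsilon)}\log P_{\psi,T}(\varphi-\beta\psi,\epsilon)\le0$ for small $\epsilon$, i.e. $\overline{{mdim}}_{M,\psi}(\varphi-\beta\psi)\le0$, i.e. $\beta\ge s$ by the covariance identity; when $s=\infty$ this set is empty and $\beta_0=\infty=s$ by $\inf\emptyset=\infty$. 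Thus $\beta_0\ge s$, and together with the previous paragraph $\beta_0=s$, as claimed.

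The delicate step is the second paragraph: converting $R_{\psi,T}$ — built from $(n,\epsilon)$-separated sets of the super-level sets $Y_n=\{S_n\psi>T\}$, summed over the infinite index set $G_T$ — into the scale of $P_{\psi,u}$. A naive discretisation of the parameter $T$ does not work, because $\psi$ may be uniformly smaller than $1$ (an interval $[S_n\psi(x),S_{n+1}\psi(x))$ need contain no integer), so one must integrate in the continuous variable $u=S_n\psi(x)$; the summability of $\int^{\infty}\epsilon^{\beta u}P_{\psi,u}(\varphi,\epsilon)\,du$ for $\beta>s$ then follows directly from the definition of $s$, together with the elementary observation that $\sum_n\sum_{x\in F_n'\cap X_n(u)}(1/\epsilon)^{S_n\varphi(x)}\le P_{\psi,u}(\varphi,\epsilon)$. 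The covariance identity and the comparison in the third paragraph are routine bookkeeping with Bowen sums (and minor measurability checks for the integral).
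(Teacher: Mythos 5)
Your proof is correct, but it takes a genuinely different route from the paper's in both directions. For $\beta_0\ge s$ the paper argues by contraposition: for $\beta<s$ it extracts a sparse sequence $T_j$ along which $P_{\psi,T_j}(X,f,d,\varphi,\epsilon_k)$ is exponentially large, verifies (its Claims 1 and 2) that the index sets $S_{T_j}$ are pairwise disjoint and that each $X_n$ at level $T_j$ sits inside $Y_n$ at level $T$, and assembles all these separated sets into a single competitor that forces the auxiliary quantity $R^{(1)}_{\psi,T}$ to be infinite. You instead prove the one-line comparison $P_{\psi,T}(\varphi-\beta\psi,\epsilon)\le\epsilon^{-\|\varphi-\beta\psi\|}R_{\psi,T}(\varphi-\beta\psi,\epsilon)$ via the shift $n\mapsto n+1$ (using $X_n\subset Y_{n+1}$ and $d_{n+1}\ge d_n$) and then invoke your covariance identity $\overline{mdim}_{M,\psi}(X,f,d,\varphi-\beta\psi)=s-\beta$, which the paper never isolates (it works instead with the weights $\beta\|\psi\|m_n(x)\pm|\beta|\|\psi\|$ and the sandwich $A\le B\le C$); this is shorter and dispenses with the sparsification. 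For $\beta_0\le s$ the paper partitions each $Y_n$ according to the discrete grid $\{lm\}_{l\ge l_0}$, observing that every $x\in Y_n$ lies in $X_n(lm)$ for a suitable $l$, and sums a geometric series; your continuous integral $\int_T^\infty\epsilon^{\beta u}P_{\psi,u}\,du$ performs the same decomposition in integrated form (your closing remark that a discretisation of $T$ fails is slightly unfair to the paper: its grid has spacing $m=\min\psi$ rather than $1$, which is exactly why it works). Net effect: your argument avoids the auxiliary quantities $R^{(1)},R^{(2)}$ and the disjointness claims, and makes the covariance structure explicit, at the price of a Tonelli interchange; the paper's stays entirely discrete. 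Both are complete.
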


\begin{proof}
	For $n\in \mathbb{N}, x\in X$,  we define $m_n(x)$ as the unique positive integer  satisfying that 
	\begin{align}\label{inequ 2.2}
		(m_n(x)-1)||\psi||<S_n\psi(x)\leq m_n(x)||\psi||.
	\end{align}
	   For any $x\in X$, we have
	\begin{align}\label{inequ 2.3}
		(1/\epsilon)^{-\beta ||\psi||m_n(x)}(1/\epsilon)^{-|\beta|||\psi||}\leq (1/\epsilon)^{-\beta S_n\psi(x)}\leq (1/\epsilon)^{-\beta ||\psi||m_n(x)}(1/\epsilon)^{|\beta|||\psi||}
	\end{align}
	for all $\beta \in\mathbb{R}$.
	
  Fix $0<\epsilon<1$. 	Define 
  \begin{align*}
	&R_{\psi, T}^{(1)}(X,f,d,\varphi,\{\beta||\psi||m_n+|\beta|||\psi||\}_{n\in G_T},\epsilon)
	=\\
	 &\sup\left\{\sum_{n\in G_T}\sum_{x \in F_n^{'}}\limits(1/\epsilon)^ {S_n\varphi(x)-\beta||\psi||m_n(x)-|\beta|||\psi||}: F_n ^{'}\mbox{ is an}~ (n,\epsilon)\mbox{-separated set  of} ~Y_n, n \in G_T \right\},
  \end{align*}	
\begin{align*}
	&R_{\psi, T}^{(2)}(X,f,d,\varphi,\{\beta||\psi||m_n-|\beta|||\psi||\}_{n\in G_T},\epsilon)
	=\\
	&\sup\left\{\sum_{n\in G_T}\sum_{x \in F_n^{'}}\limits(1/\epsilon)^ {S_n\varphi(x)-\beta||\psi||m_n(x)+|\beta|||\psi||}:F_n ^{'}\mbox{ is an}~ (n,\epsilon)\mbox{-separated set  of} ~Y_n, n \in G_T \right\}.
\end{align*}
 Set
 \begin{align*}
 	A&=\inf\{\beta \in \mathbb{R}:\limsup_{\epsilon \to 0}\limsup_{T \to \infty}R_{\psi, T}^{(1)}(X,f,d,\varphi,\{\beta||\psi||m_n+|\beta|||\psi||\}_{n\in G_T},\epsilon)<\infty\},\\
 	B&=\inf\{\beta \in \mathbb{R}: \limsup_{\epsilon \to 0}\limsup_{T \to \infty}R_{\psi, T}(X,f,d,\varphi-\beta \psi,\epsilon)<\infty\},\\
 	C&=\inf\{\beta \in \mathbb{R}:\limsup_{\epsilon \to 0}\limsup_{T \to \infty}R_{\psi, T}^{(2)}(X,f,d,\varphi,\{\beta||\psi||m_n-|\beta|||\psi||\}_{n\in G_T},\epsilon)<\infty\}.
 	\end{align*}
	By  (\ref{inequ 2.3}), we have 
	$A\leq B\leq C$. To  get (\ref{inequ 2.1}), it suffices to show
	$$\overline{{mdim}}_{M,\psi}(X,f,d,\varphi)\leq A,  ~\mbox{and} ~~C\leq \overline{{mdim}}_{M,\psi}(X,f,d,\varphi).$$
	
	 Firstly, we  show that $\overline{{mdim}}_{M,\psi}(X,f,d,\varphi)\leq A$. Let  $\beta<\overline{{mdim}}_{M,\psi}(X,f,d,\varphi)$,  we can choose a positive number $\delta>0$ and  a sequence of positive number $0<\epsilon_k<1$ such that  $$\beta+\delta<\overline{{mdim}}_{M,\psi}(X,f,d,\varphi),$$
	 and 
	 $$\overline{{mdim}}_{M,\psi}(X,f,d,\varphi)=\lim\limits_{k \to \infty}\limsup_{T \to \infty}\frac{1}{\log(1/\epsilon_k)T}\log P_{\psi,T}(X,f,d,\varphi,\epsilon_k).$$
	 
	Hence, there exists $K_0\in \mathbb{N}$ such that for any $k>K_0$, we can choose a subsequence $\{T_j\}_{j\in \mathbb{N}}$  convergences to  $\infty$ as $j \to \infty$ satisfying  that  
	$$(1/\epsilon_k)^{T_j(\beta+\frac{\delta}{2})}<P_{\psi,T_j}(X,f,d,\varphi,\epsilon_k).$$
	
	For every $j\in \mathbb{N}$ and $n\in S_{T_j}$ there is  an $(n,\epsilon)$-separated set $F_n$ of $X_n$  so that
	\begin{align}\label{inequ 2.4}
		(1/\epsilon_k)^{T_j(\beta+\frac{\delta}{2})}<\sum_{n\in S_{T_j}}\sum_{x\in F_n}(1/\epsilon_k)^{S_n\varphi(x)}.
	\end{align}

	 Claim 1: Let $T>0$. If $S_T\not=\emptyset$, then for each $n\in S_T$, we have  $$\frac{T}{||\psi||}-1<n\leq [\frac{T}{m}]+1,$$
	 where $m=\min_{x\in X}\psi(x)>0$.
	 
	 Proof of the Claim 1: Let $n\in S_T$, then there exists a point $x\in X$ such that $S_n\psi(x)\leq T$ and $S_{n+1}\psi(x)>T$. It follows that $T-||\psi||<S_n\psi(x)\leq T$, which implies  the desired claim.

	 Taking $T_{j_1}$ arbitrarily,  note that $T_j \to \infty$, then we  can choose  $T_{j_2}$ such that 
	 $$[\frac{T_{j_1}}{m}]+1<\frac{T_{j_2}}{||\psi||}-1.$$
	 
	Repeating this process, we can choose a subsequence $T_{j_k}$ of $T_j$ that   convergences to $\infty$ as $k\to \infty$. Without loss of generality, we still denote  the subsequence $T_{j_k}$ by $T_j$. 
	
	Claim 2: $S_{T_i}\cap S_{T_j}=\emptyset$ with $i\not= j$.
	
	Proof of the Claim 2:  If $S_{T_i}\cap S_{T_j}\not=\emptyset$, we assume that $i<j$ and
	let $n\in  S_{T_i}\cap S_{T_j}$,
	then there exists $x_1,x_2\in X$ such that $S_n\psi(x_1)\leq T_i$ and $S_{n+1}\psi(x_1)>T_i$, 
	$S_n\psi(x_2)\leq T_j$ and $S_{n+1}\psi(x_2)>T_j$.
	By Claim 1, we have $$n\leq [\frac{T_i}{m}]+1 <\frac{T_j}{||\psi||}-1<n,$$
	 so we get a  contradiction.

	Note that for each $j\in \mathbb{N}$ and $n\in S_{T_j}$, if $x\in F_n$, then we have $T_j-||\psi||<S_n\psi(x)\leq T_j$. Together with inequality (\ref{inequ 2.2}), we get 
	  \begin{align}\label{inequ 2.5}
	  	|||\psi||m_n(x)-T_j|<2||\psi||.
	  \end{align}
  Observed that $-\beta ||\psi||m_n(x)\geq -\beta T_j -2|\beta|||\psi||.$ 
  This gives us 
	  \begin{align*}
	  	&R_{\psi, T}^{(1)}(X,f,d,\varphi,\{\beta||\psi||m_n+|\beta|||\psi||\}_{n\in G_T},\epsilon_k)\\
	  	&\geq \sum_{j\in \mathbb{N},T_j-||\psi||>T}\sum_{n\in S_{T_j}}\sum_{x\in F_n}(1/\epsilon_k)^{S_n\varphi(x)-\beta||\psi||m_n(x)-|\beta|||\psi||}\\  
	  	&\geq (1/\epsilon_k)^{-3|\beta|||\psi||}\sum_{j\in \mathbb{N},T_j-||\psi||>T}\sum_{n\in S_{T_j}}\sum_{x\in F_n}(1/\epsilon_k)^{S_n\varphi(x)-\beta T_j}\\	
	  	&\geq (1/\epsilon_k)^{-3|\beta|||\psi||}\sum_{j\in \mathbb{N},T_j-||\psi||>T}(1/\epsilon_k)^{\frac{\delta}{2} T_j}  ~~~~~~ \mbox{by} ~(\ref{inequ 2.4})\\
	  	&=\infty.
	  \end{align*}
   It follows that  for any $\beta<\overline{{mdim}}_{M,\psi}(X,f,d,\varphi)$, we have 
   \begin{align}\label{inequ 2.6}
   	\limsup_{\epsilon \to 0}\limsup_{T \to \infty}R_{\psi, T}^{(1)}(X,f,d,\varphi,\{\beta||\psi||m_n+|\beta|||\psi||\}_{n\in G_T},\epsilon)=\infty.
   \end{align}
 Therefore,  we obtain $\overline{{mdim}}_{M,\psi}(X,f,d,\varphi)\leq A.$
 
  If $\overline{{mdim}}_{M,\psi}(X,f,d,\varphi)=\infty$, let $P<\overline{{mdim}}_{M,\psi}(X,f,d,\varphi)$, by   slightly modifying the above proof,
   one can show  for any $\beta <P$,
   \begin{align*}
   	\limsup_{\epsilon \to 0}\limsup_{T \to \infty}R_{\psi, T}^{(1)}(X,f,d,\varphi,\{\beta||\psi||m_n+|\beta|||\psi||\}_{n\in G_T},\epsilon)=\infty.
   \end{align*}
Since $P$ is arbitrary, and using the convention, we know that  $A=\inf \emptyset=\infty.$  Hence  $$\overline{{mdim}}_{M,\psi}(X,f,d,\varphi)=A=\infty.$$

   Now, we turn to  show $C\leq \overline{{mdim}}_{M,\psi}(X,f,d,\varphi).$   We assume that $\overline{{mdim}}_{M,\psi}(X,f,d,\varphi)<\infty$, otherwise  there is nothing to prove.  Let $\delta >0$, using the definition of $\overline{{mdim}}_{M,\psi}(X,f,d,\varphi)$, there   is a $0<\epsilon_0<1$ so that for  every $0<\epsilon<\epsilon_0$,  
   $$\limsup_{T \to \infty}\frac{1}{\log(1/\epsilon)T}\log P_{\psi,T}(X,f,d,\varphi,\epsilon)<\overline{{mdim}}_{M,\psi}(X,f,d,\varphi)+\frac{\delta}{2}.$$
   Hence,  we can choose an $l_0\in \mathbb{N}$ such that   for  any $l\geq l_0$, 
   \begin{align}\label{inequ 2.7}
   	&P_{\psi,lm}(X,f,d,\varphi,\epsilon)<(1/\epsilon)^{lm(\overline{{mdim}}_{M,\psi}(X,f,d,\varphi)+\frac{2\delta}{3})},
   \end{align}
   \begin{align} \label{inequ 2.8}
   	\frac{\delta}{3}l_0m-\Delta-1>0,
   \end{align}
   where $\Delta=3||\psi||(|\overline{{mdim}}_{M,\psi}(X,f,d,\varphi)+\delta|)$ and  $m=\min_{x\in X}\limits \psi(x).$ \\
   For  $n\in S_{lm}$, let $F_n$  be an $(n,\epsilon)$-separated set of $X_n$. Then for each $x\in F_n$, we have
   $$	|||\psi||m_n(x)-lm|<2||\psi||.$$
 Therefore,  we  obtain that
   \begin{align} \label{inequ 2.9}
   	&-(\overline{{mdim}}_{M,\psi}(X,f,d,\varphi)+\delta)||\psi||m_n(x)  \nonumber \\ 
   	&\leq -lm(\overline{{mdim}}_{M,\psi}(X,f,d,\varphi)+\delta)+2||\psi||(|\overline{{mdim}}_{M,\psi}(X,f,d,\varphi)+\delta|). 
   \end{align}

   For sufficiently large $T$,  $n\in G_T$, let $F_n^{'}$  be an $(n,\epsilon)$-separated set of $Y_n$. Then for each $x\in F_n^{'}$, there exists  a unique $l\geq l_0$ such that $(l-1)m<S_n\psi(x)\leq lm$. So $S_{n+1}\psi(x)=S_n\psi(x)+\psi(f^nx)>lm$. It follows that 
   \begin{align*}
   	&R_{\psi, T}^{(2)}(X,f,d,\varphi,\{(\overline{{mdim}}_{M,\psi}(X,f,d,\varphi)+\delta)||\psi||m_n-|(\overline{{mdim}}_{M,\psi}(X,f,d,\varphi)+\delta)|||\psi||\}_{n\in G_T},\epsilon)\\
   	&\leq \sum_{l\geq l_0}\sup\left\{\sum_{n\in S_{lm}}\sum_{x\in F_n}(1/\epsilon)^{S_n\varphi(x)-(\overline{{mdim}}_{M,\psi}(X,f,d,\varphi)+\delta)||\psi||m_n(x)+|(\overline{{mdim}}_{M,\psi}(X,f,d,\varphi)+\delta)|||\psi||},\right. \\
   	&~~~~~~~~~~~~~~~~~~~~~~~~~~~~~~~~~~~~~~\left. F_n ~\mbox{is an}~ (n,\epsilon)\mbox{-separated set  of} ~X_n, n \in S_{lm} \right\}\\
   	&\leq(1/\epsilon)^{3|(\overline{{mdim}}_{M,\psi}(X,f,d,\varphi)+\delta)|||\psi||}\sum_{l\geq l_0}\sup\left\{\sum_{n\in S_{lm}}\sum_{x\in F_n}(1/\epsilon)^{S_n\varphi(x)-lm(\overline{{mdim}}_{M,\psi}(X,f,d,\varphi)+\delta)} ,\right. \\
   	& \left.~~~~~~~~~~~~~~~~~~~~~~~~~~~~~~~~~~~~~F_n ~\mbox{is an}~ (n,\epsilon)\mbox{-separated set  of} ~X_n, n \in S_{lm} \right\}~~~ \mbox{by} ~~(\ref{inequ 2.9})\\
   	&\leq (1/\epsilon)^{\Delta}\sum_{l\geq l_0}(1/\epsilon)^{-\frac{\delta}{3}lm}  ~~~ \mbox{by}~~ (\ref{inequ 2.7})\\
   	&\leq\frac{{\epsilon}^{\frac{\delta}{3}l_0m-\Delta} }{1-{\epsilon}^{\frac{\delta}{3}m}}<\frac{\epsilon} {1-{\epsilon}^{\frac{\delta}{3}m}}< \frac{1} {1-{\epsilon}^{\frac{\delta}{3}m}}~~~ \mbox{by} ~~(\ref{inequ 2.8}).
   \end{align*}
   
  Therefore, we get 
   \begin{align*}
   	&\limsup_{\epsilon \to 0}\limsup_{T \to \infty}\log R_{\psi, T}^{(2)}(X,f,d,\varphi,\\ &\{(\overline{{mdim}}_{M,\psi}(X,f,d,\varphi)+\delta)||\psi||m_n-|(\overline{{mdim}}_{M,\psi}(X,f,d,\varphi)+\delta)|||\psi||\}_{n\in G_T},\epsilon)\leq 1.
   \end{align*}
   That is to say, $C\leq \overline{mdim}_{M,\psi}(X,f,d,\varphi)+\delta $, 
   and hence we  obtain $C\leq \overline{mdim}_{M,\psi}(X,f,d,\varphi)$ by letting $\delta \to 0$.

\end{proof}

\begin{corollary}\label{coro 2.5}
	Let $(X,f)$ be a  TDS  with a metric $d \in \mathscr{D}(X)$ and $\varphi,\psi \in C(X,\mathbb{R})$ with $\psi>0$. Then
	$$	\overline{mdim}_{M,\psi}(X,f,d,\varphi)\geq\inf\{\beta\in \mathbb{R}:\overline{mdim}_{M}(X,f,d,\varphi-\beta\psi)\leq0\}.$$	
\end{corollary}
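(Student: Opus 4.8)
The strategy is to obtain Corollary \ref{coro 2.5} directly from the infimum formula in Theorem \ref{thm 2.4}. Write $\beta_0:=\inf\{\beta\in\mathbb{R}:\overline{mdim}_{M}(X,f,d,\varphi-\beta\psi)\leq 0\}$ (with $\inf\emptyset=\infty$, as in Theorem \ref{thm 2.4}); we must show $\overline{mdim}_{M,\psi}(X,f,d,\varphi)\geq\beta_0$. Since $\psi>0$ and $1/\epsilon>1$, a pointwise comparison of the weights $(1/\epsilon)^{S_n(\varphi-\beta\psi)(x)}$ shows that both $\beta\mapsto\overline{mdim}_{M}(X,f,d,\varphi-\beta\psi)$ and $\beta\mapsto R_{\psi,T}(X,f,d,\varphi-\beta\psi,\epsilon)$ are non-increasing; in particular $\{\beta:\limsup_{\epsilon\to 0}\limsup_{T\to\infty}R_{\psi,T}(X,f,d,\varphi-\beta\psi,\epsilon)<\infty\}$ is upward closed, so by Theorem \ref{thm 2.4} it is enough to show that $\limsup_{\epsilon\to 0}\limsup_{T\to\infty}R_{\psi,T}(X,f,d,\varphi-\beta\psi,\epsilon)=\infty$ for every $\beta<\beta_0$; taking the supremum over such $\beta$ then yields $\beta_0\leq\overline{mdim}_{M,\psi}(X,f,d,\varphi)$ (the degenerate cases $\beta_0=\pm\infty$ are covered by the same statement, using $\overline{mdim}_{M,\psi}(X,f,d,\varphi)>-\infty$ from the Remark when $\beta_0=-\infty$).

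So fix $\beta<\beta_0$ and abbreviate $\phi:=\varphi-\beta\psi\in C(X,\mathbb{R})$. By the very definition of $\beta_0$ we have $\overline{mdim}_{M}(X,f,d,\phi)>0$, hence there is a sequence $\epsilon_k\downarrow 0$ with $P(X,f,d,\phi,\epsilon_k)>0$ for every $k$; and for each fixed $k$, since $P(X,f,d,\phi,\epsilon_k)=\limsup_{n\to\infty}\frac{1}{n}\log\#_{sep}(X,d_n,S_n\phi,\epsilon_k)>0$, there is a sequence $n_j\to\infty$ along which $\#_{sep}(X,d_{n_j},S_{n_j}\phi,\epsilon_k)\to\infty$.

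The decisive elementary point is that, for any threshold $T$, the quantity $R_{\psi,T}$ already accounts for all sufficiently large Bowen scales. Indeed, with $m=\min_{x\in X}\psi(x)>0$, if $n>T/m$ then $S_n\psi(x)\geq nm>T$ for every $x\in X$, so $n\in G_T$ and $Y_n=X$; keeping in the defining supremum of $R_{\psi,T}(X,f,d,\phi,\epsilon)$ only the contribution of this single index $n$ (with the empty separated set at every other index of $G_T$) gives $R_{\psi,T}(X,f,d,\phi,\epsilon)\geq\#_{sep}(X,d_n,S_n\phi,\epsilon)$ for all $n>T/m$. Fixing $k$ as above, for each $T$ we may let $n$ run through the values $n_j$ with $n_j>T/m$; since $\#_{sep}(X,d_{n_j},S_{n_j}\phi,\epsilon_k)\to\infty$, this forces $R_{\psi,T}(X,f,d,\phi,\epsilon_k)=\infty$ for every $T$, hence $\limsup_{T\to\infty}R_{\psi,T}(X,f,d,\phi,\epsilon_k)=\infty$ for every such $k$, and therefore $\limsup_{\epsilon\to 0}\limsup_{T\to\infty}R_{\psi,T}(X,f,d,\varphi-\beta\psi,\epsilon)=\infty$, which is exactly the reduction from the first paragraph.

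I do not expect a genuine obstacle here, since the heart of the matter --- that $R_{\psi,T}$ for every $T$ sees arbitrarily large $n$ with $Y_n=X$, so that positivity of $\overline{mdim}_{M}(X,f,d,\varphi-\beta\psi)$ makes $R_{\psi,T}$ infinite --- is immediate. The only mild care needed is in the two monotonicity remarks (used to turn ``$R$ is infinite at $\beta$'' into ``$\beta\leq\overline{mdim}_{M,\psi}$'' via the infimum in Theorem \ref{thm 2.4}) and in the degenerate cases $\beta_0\in\{\pm\infty\}$, where the convention $\inf\emptyset=\infty$ together with running the argument for all real $\beta$ forces $\overline{mdim}_{M,\psi}(X,f,d,\varphi)=\infty$.
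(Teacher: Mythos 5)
Your proof is correct and is essentially the paper's own argument read in contrapositive form: both rest on the key observation that for $n>T/m$ one has $S_n\psi>T$ everywhere, so $Y_n=X$ and $R_{\psi,T}(X,f,d,\varphi-\beta\psi,\epsilon)$ dominates the full separated-set sums $\sum_{x\in F_n}(1/\epsilon)^{S_n(\varphi-\beta\psi)(x)}$ at all sufficiently large scales, which ties finiteness of $\limsup_{\epsilon\to0}\limsup_{T\to\infty}R_{\psi,T}$ to $\overline{mdim}_{M}(X,f,d,\varphi-\beta\psi)\leq0$ via Theorem \ref{thm 2.4}.
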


\begin{proof}
	Let $\beta \in \{\beta \in \mathbb{R}: \limsup_{\epsilon \to 0}\limits \limsup_{T \to \infty}\limits R_{\psi, T}(X,f,d,\varphi-\beta \psi,\epsilon)<\infty\},$ and let $M:=\limsup_{\epsilon \to 0}\limits \limsup_{T \to \infty}\limits R_{\psi, T}(X,f,d,\varphi-\beta \psi,\epsilon)$. Then we can  find $0<\epsilon_0<1$ such that for any $0<\epsilon<\epsilon_0$,  there is a $T_0\in \mathbb{N}$  so that for all $T\geq T_0$, we have 
	$$R_{\psi, T}(X,f,d,\varphi-\beta \psi,\epsilon)<M+1.$$
	There exists a subsequence $n_j$   that convergences to  $\infty$ as $j \to \infty$ such that
	\begin{align*}
		P(X,f,d,\varphi-\beta\psi,\epsilon)&=\limsup_{n \to \infty}\frac{\log \#_{sep}(X,d_n, S_n(\varphi-\beta\psi), \epsilon)}{n}\\
		&=\lim_{j \to \infty}\frac{\log \#_{sep}(X,d_{n_j}, S_{n_j}(\varphi-\beta\psi), \epsilon)}{n_j}.
	\end{align*} 
	Therefore,  for each  $T\geq T_0$, there exists  sufficiently   large positive number $n_j>T$ such that 
	$S_{n_j}\psi(x)>T$ for all $x\in X$.  Hence,  $n_j \in G_T$. Let $F_{n_j}$ be an  $(n_j,\epsilon)$-separated set  of $X$. Then
	$$\sum_{x\in F_{n_j}}\limits (1/\epsilon)^{S_{n_j}(\varphi(x)-\beta\psi(x))}<M+1,$$
	which shows that $P(X,f,d,\varphi-\beta\psi,\epsilon)\leq0$.
	This yields that $\overline{mdim}_{M}(X,f,d,\varphi-\beta\psi)\leq0$.
	By Theorem \ref{thm 2.4}, we deduce that 
	
	$$\overline{{mdim}}_{M,\psi}(X,f,d,\varphi)
	\geq\inf\{\beta\in \mathbb{R}:\overline{mdim}_{M}(X,f,d,\varphi-\beta\psi)\leq0\}.$$	
\end{proof}

The following proposition describes  some  properties of the function  $\overline{{mdim}}_M(X,f,d,\varphi-\beta\psi)$ with respect to $\beta$,
which is useful for establishing the Bowen's equation for upper metric mean dimension with potential on the whole phase space. 

 \begin{proposition}\label{prop 2.6}
	Let $(X,f)$ be a  TDS with a metric $d \in \mathscr{D}(X)$  and $\varphi,\psi \in C(X,\mathbb{R})$ with $\psi>0$. Consider the map  $\beta \in \mathbb{R} \longmapsto \overline{mdim}_{M}(X,f,d,\varphi-\beta\psi)$. The following statements hold. 
	\begin{enumerate}[(i)]
		\item If  $\overline{mdim}_{M}(X,f,d,\varphi-\beta_0\psi)=\infty$ for some $ \beta_0 \in \mathbb{R}$, then the map $\overline{mdim}_{M}(X,f,d,\varphi-\cdot\psi)$ is infinite.
		\item If  $\overline{mdim}_{M}(X,f,d,\varphi-\beta_0\psi)<\infty$ for some $ \beta_0 \in \mathbb{R}$, then the map $\overline{mdim}_{M}(X,f,d,\varphi-\cdot\psi)$ is finite,  strictly  decreasing and  continuous on $\mathbb{R}$. Moreover, the equation  $\overline{mdim}_{M}(X,f,d,\varphi-\beta\psi)=0$ has unique (finite) root. 
	\end{enumerate}	
\end{proposition}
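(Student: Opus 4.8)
The plan is to distill a single two-sided estimate for the function $g(\beta):=\overline{mdim}_M(X,f,d,\varphi-\beta\psi)$ and then read off all the assertions from it. Write $m=\min_{x\in X}\psi(x)>0$, so that $nm\le S_n\psi(x)\le n\|\psi\|$ for every $x\in X$ and every $n$. Fix $0<\epsilon<1$ and real numbers $\beta_1<\beta_2$. From $S_n(\varphi-\beta_2\psi)(x)=S_n(\varphi-\beta_1\psi)(x)-(\beta_2-\beta_1)S_n\psi(x)$ and $\log\frac1\epsilon>0$ we get, for every $(n,\epsilon)$-separated set $F$ of $X$,
\begin{align*}
(1/\epsilon)^{-(\beta_2-\beta_1)n\|\psi\|}\sum_{x\in F}(1/\epsilon)^{S_n(\varphi-\beta_1\psi)(x)}&\le\sum_{x\in F}(1/\epsilon)^{S_n(\varphi-\beta_2\psi)(x)}\\
&\le(1/\epsilon)^{-(\beta_2-\beta_1)nm}\sum_{x\in F}(1/\epsilon)^{S_n(\varphi-\beta_1\psi)(x)}.
\end{align*}
Taking the supremum over such $F$ gives the same chain of inequalities for $\#_{sep}(X,d_n,S_n(\varphi-\beta_i\psi),\epsilon)$; applying $\frac1n\log(\cdot)$ and $\limsup_{n\to\infty}$, then dividing by $\log\frac1\epsilon$ and letting $\epsilon\to0$, yields
\begin{equation*}
g(\beta_1)-(\beta_2-\beta_1)\|\psi\|\ \le\ g(\beta_2)\ \le\ g(\beta_1)-(\beta_2-\beta_1)m\qquad(\beta_1<\beta_2),\tag{$\star$}
\end{equation*}
valid in $[-\infty,+\infty]$. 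I would also record that $g(\beta)\ge-\|\varphi-\beta\psi\|>-\infty$ for every $\beta$ (take $F$ a single point), so $g$ never equals $-\infty$.

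Part (i) then follows from $(\star)$: if $g(\beta_0)=\infty$, then for $\beta<\beta_0$ the right-hand inequality of $(\star)$ with $(\beta_1,\beta_2)=(\beta,\beta_0)$ forces $g(\beta)=\infty$, and for $\beta>\beta_0$ the left-hand inequality with $(\beta_1,\beta_2)=(\beta_0,\beta)$ forces $g(\beta)=\infty$; hence $g\equiv\infty$. For part (ii), if $g(\beta_0)<\infty$ then $g$ cannot equal $\infty$ anywhere, since by (i) that would force $g(\beta_0)=\infty$; so $g$ is finite on $\mathbb{R}$. With $g$ finite, $(\star)$ gives $g(\beta_2)\le g(\beta_1)-(\beta_2-\beta_1)m<g(\beta_1)$ for $\beta_1<\beta_2$ (strict decrease, since $m>0$), and also $|g(\beta_2)-g(\beta_1)|\le\|\psi\|\,|\beta_2-\beta_1|$, so $g$ is Lipschitz, hence continuous, on $\mathbb{R}$.

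For the root of $g=0$: uniqueness is immediate from strict monotonicity. For existence, fix any $\beta^\ast$; letting $\beta_2\to+\infty$ in the right-hand inequality of $(\star)$ with $\beta_1=\beta^\ast$ gives $g(\beta)\to-\infty$, while rewriting that inequality as $g(\beta_1)\ge g(\beta_2)+(\beta_2-\beta_1)m$ and letting $\beta_1\to-\infty$ with $\beta_2=\beta^\ast$ gives $g(\beta)\to+\infty$. Since $g$ is continuous, the intermediate value theorem produces a $\beta$ with $g(\beta)=0$, unique by monotonicity; this is the ``Moreover'' clause.

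The proof is essentially bookkeeping. The only genuine point of care is that the two $\limsup$ operations (over $n$ and over $\epsilon$) respect the additive correction terms appearing in the chain of inequalities — which works precisely because, once one divides by $\log\frac1\epsilon$, the corrections $(\beta_2-\beta_1)m$ and $(\beta_2-\beta_1)\|\psi\|$ are constants independent of $n$ and $\epsilon$ — together with keeping the extended-real arithmetic in $(\star)$ legitimate, which is why one notes that $g>-\infty$ always. I do not expect any substantial obstacle beyond this.
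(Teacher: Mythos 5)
Your proposal is correct and follows essentially the same route as the paper: the two-sided estimate $(\star)$ is exactly the combination of the paper's inequalities (2.10) (Lipschitz bound with constant $\|\psi\|$, giving continuity and the finiteness dichotomy) and (2.11) (strict decrease at rate at least $m$), and both arguments conclude existence of the root via the intermediate value theorem. The only cosmetic difference is that the paper plugs in specific values of $\beta$ to locate the root in an explicit interval such as $\bigl[0,\overline{mdim}_{M}(X,f,d,\varphi)/m\bigr]$, whereas you let $\beta\to\pm\infty$; this changes nothing of substance.
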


\begin{proof}
	Given  $0<\epsilon<1$. For $\beta_1,\beta_2\in \mathbb{R}$ and each $n\in \mathbb{N}$,  
	\begin{align*}
		&\sum_{x\in E} (1/\epsilon)^{S_n \varphi(x)-\beta_2 S_n\psi(x)-n|\beta_1-\beta_2|\cdot||\psi||}\\
		\leq&\sum_{x\in E} (1/\epsilon)^{S_n\varphi(x)-\beta_1 S_n\psi(x)}\\
		\leq&\sum_{x\in E} (1/\epsilon)^{S_n\varphi(x)-\beta_2 S_n\psi(x)+n|\beta_1-\beta_2|\cdot||\psi||},
	\end{align*}
	where $E$  is an $(n,\epsilon)$-separated set of $X$.
	
	Therefore,  
	\begin{align} \label{inequ 2.10}
		\overline{{mdim}}_{M}(X,f,d,\varphi-\beta_2\psi)-|\beta_1-\beta_2|||\psi||&\leq \overline{{mdim}}_{M}(X,f,d,\varphi-\beta_1\psi)\nonumber\\
		&\leq \overline{{mdim}}_{M}(X,f,d,\varphi-\beta_2\psi)+|\beta_1-\beta_2|||\psi||.
	\end{align}
	This yields that $	\overline{{mdim}}_{M}(X,f,d,\varphi-\beta_1\psi)<\infty$ if and only if  $	\overline{{mdim}}_{M}(X,f,d,\varphi-\beta_2\psi)<\infty$, which confirms our corresponding statements.
	
	Under the assumption of (ii), we prove the remaining statements.
	
	It follows from the inequality (\ref{inequ 2.10}) that 
	$$|\overline{mdim}_{M}(X,f,d,\varphi-\beta_1\psi)-\overline{mdim}_{M}(X,f,d,\varphi-\beta_2\psi)|\leq||\psi|||\beta_1-\beta_2|.$$
	This tells us the map $\overline{mdim}_{M}(X,f,d,\varphi-\cdot\psi)$ is continuous on  $\mathbb{R}$.
	
	Let  $\beta_1, \beta_2 \in \mathbb{R}$  with $\beta_1<\beta_2$,  and fix $0<\epsilon<1$. Let $F_n$ be an $(n,\epsilon)$-separated set of $X$, we have
	\begin{align*}
		&\sum_{x\in F_n}(1/\epsilon)^{S_n \varphi(x)-\beta_2S_n \psi(x)}\\
		=&\sum_{x\in F_n}(1/\epsilon)^{S_n \varphi(x)-\beta_1S_n \psi(x)+(\beta_1-\beta_2)S_n\psi(x)}	\\
		\leq& \sum_{x\in F_n}(1/\epsilon)^{S_n \varphi(x)-\beta_1S_n 
			\psi(x)+(\beta_1-\beta_2)nm},
	\end{align*}
	where  $m=\min_{x\in X}\psi(x)>0$.
	
	Then we obtain that 
	\begin{align}\label{inequ 2.11}
		\overline{mdim}_{M}(X,f,d,\varphi-\beta_2\psi)\leq\overline{mdim}_{M}(X ,f,d,\varphi-\beta_1\psi)-(\beta_2-\beta_1)m,
	\end{align}
	
	which implies that  the map $\overline{mdim}_{M}(X,f,d,\varphi-\cdot\psi)$ is strictly  decreasing.

	If $\overline{mdim}_{M}(X,f,d,\varphi)=0$, then $0$ is the unique root of the equation  $\overline{mdim}_{M}(X,f,d,\varphi-\beta\psi)=0$. 
	
	If $\overline{mdim}_{M}(X,f,d,\varphi)\not=0$, we assume  that $\overline{mdim}_{M}(X,f,d,\varphi)>0$,  taking $\beta_1=0$  and $\beta_2=h>0$ in $(\ref{inequ 2.11})$,
	then 
	$$\overline{mdim}_{M}(X,f,d,\varphi-h\psi)\leq\overline{mdim}_{M}(X ,f,d,\varphi)-hm.$$
	
	Hence, the unique root  $\beta$ of the equation $\overline{mdim}_{M}(X,f,d,\varphi-\beta\psi)=0$  satisfies  $0<\beta \leq \frac{\overline{mdim}_{M}(X, f, d, \varphi)}{m}.$
	
	For the case $\overline{mdim}_{M}(X,f,d,\varphi)<0$,  taking $\beta_1=h<0$ and $\beta_2=0$ in $(\ref{inequ 2.11})$ again, 
	$$\overline{mdim}_{M}(X,f,d,\varphi)-hm\leq \overline{mdim}_{M}(X,f,d,\varphi-h\psi).$$
	
	Then  the unique root  $\beta$ of the equation   $\overline{mdim}_{M}(X,f,d,\varphi-\beta\psi)=0$ satisfies  $\frac{\overline{mdim}_{M}(X ,f,d,\varphi)}{m}\leq \beta <0$. 	
	
\end{proof}

\begin{corollary}\label{coro 2.7}
	Let $(X,f)$ be a  TDS   with a metric $d \in \mathscr{D}(X)$   and $\varphi,\psi \in C(X,\mathbb{R})$ with $\psi>0$.  Then
	\begin{align*}
		\overline{mdim}_{M,\psi}(X,f,d,\varphi)&=\inf\{\beta\in \mathbb{R}:\overline{mdim}_{M}(X,f,d,\varphi-\beta\psi)\leq0\}\\
		&=\sup\{\beta\in \mathbb{R}:\overline{mdim}_{M}(X,f,d,\varphi-\beta\psi)\geq0\}.
	\end{align*}	
\end{corollary}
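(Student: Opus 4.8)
The plan is to derive the two displayed equalities from three facts already established: Corollary \ref{coro 2.5} (which gives ``$\geq$'' in the first equality), Theorem \ref{thm 2.4} (which rewrites $\overline{mdim}_{M,\psi}(X,f,d,\varphi)$ in terms of the quantities $R_{\psi,T}$), and Proposition \ref{prop 2.6} (which controls the function $g(\beta):=\overline{mdim}_{M}(X,f,d,\varphi-\beta\psi)$). Keep the convention $\inf\emptyset=+\infty$ from Theorem \ref{thm 2.4}. By Proposition \ref{prop 2.6}(i), exactly one of two cases occurs. In the \emph{infinite case} $g\equiv+\infty$: then $\{\beta:g(\beta)\le0\}=\emptyset$ and $\{\beta:g(\beta)\ge0\}=\mathbb{R}$, so both right-hand sides equal $+\infty$; meanwhile Corollary \ref{coro 2.5} gives $\overline{mdim}_{M,\psi}(X,f,d,\varphi)\ge\inf\emptyset=+\infty$, so all three quantities are $+\infty$ and there is nothing more to prove.

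So assume from now on that $g$ is finite on all of $\mathbb{R}$. By Proposition \ref{prop 2.6}(ii), $g$ is strictly decreasing and the equation $g(\beta)=0$ has a unique root $\beta^{*}\in\mathbb{R}$. Strict monotonicity alone then forces $\{\beta:g(\beta)\le0\}=[\beta^{*},\infty)$ and $\{\beta:g(\beta)\ge0\}=(-\infty,\beta^{*}]$, hence
$$\inf\{\beta\in\mathbb{R}:g(\beta)\le0\}=\beta^{*}=\sup\{\beta\in\mathbb{R}:g(\beta)\ge0\}.$$
It therefore remains only to prove $\overline{mdim}_{M,\psi}(X,f,d,\varphi)=\beta^{*}$. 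The lower bound $\overline{mdim}_{M,\psi}(X,f,d,\varphi)\ge\beta^{*}$ is precisely Corollary \ref{coro 2.5}.

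For the upper bound $\overline{mdim}_{M,\psi}(X,f,d,\varphi)\le\beta^{*}$ I would invoke Theorem \ref{thm 2.4}: it suffices to show that every $\beta>\beta^{*}$ lies in the set $\{\beta:\limsup_{\epsilon\to0}\limsup_{T\to\infty}R_{\psi,T}(X,f,d,\varphi-\beta\psi,\epsilon)<\infty\}$, and then let $\beta\downarrow\beta^{*}$. Fix such a $\beta$; since $g$ is strictly decreasing with root $\beta^{*}$, we have $g(\beta)<0$. Two elementary observations drive the estimate. First, any $(n,\epsilon)$-separated subset of $Y_n$ is an $(n,\epsilon)$-separated subset of $X$, so
$$R_{\psi,T}(X,f,d,\varphi-\beta\psi,\epsilon)\le\sum_{n\in G_T}\#_{sep}\bigl(X,d_n,S_n(\varphi-\beta\psi),\epsilon\bigr).$$
Second, if $n\in G_T$ then $S_n\psi(x)>T$ for some $x$, and since $S_n\psi(x)\le n||\psi||$ this forces $n>T/||\psi||$; thus the right-hand side above is the tail, from index $>T/||\psi||$ onwards, of the single series $\sum_{n\ge1}\#_{sep}(X,d_n,S_n(\varphi-\beta\psi),\epsilon)$. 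Finally, $g(\beta)<0$ yields some $\epsilon_0>0$ such that $P(X,f,d,\varphi-\beta\psi,\epsilon)<0$ for all $0<\epsilon<\epsilon_0$, and $P(\cdot\,,\epsilon)<0$ forces $\#_{sep}(X,d_n,S_n(\varphi-\beta\psi),\epsilon)$ to decay exponentially in $n$, so that series converges and its tail tends to $0$ as $T\to\infty$. Hence $\limsup_{T\to\infty}R_{\psi,T}(X,f,d,\varphi-\beta\psi,\epsilon)=0$ for every $0<\epsilon<\epsilon_0$, so the double limsup equals $0<\infty$, and Theorem \ref{thm 2.4} gives $\overline{mdim}_{M,\psi}(X,f,d,\varphi)\le\beta$ for all $\beta>\beta^{*}$, i.e. $\overline{mdim}_{M,\psi}(X,f,d,\varphi)\le\beta^{*}$.

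The step I expect to demand the most care is this last estimate: one must push ``$g(\beta)<0$'' correctly through the two nested limsups (first over $\epsilon$, then over $n$) that define $\overline{mdim}_{M}$ and $P$, so as to obtain, for each sufficiently small $\epsilon$, genuine exponential decay of $\#_{sep}(X,d_n,S_n(\varphi-\beta\psi),\epsilon)$ in $n$, and one must verify that membership in $G_T$ really does drive the summation index to $+\infty$ with $T$ (using $m\le\psi(x)\le||\psi||$). Everything else is routine bookkeeping on top of Proposition \ref{prop 2.6} and Corollary \ref{coro 2.5}.
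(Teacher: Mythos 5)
Your proposal is correct and follows essentially the same route as the paper: the infinite case is dispatched via Proposition \ref{prop 2.6}(i) and Corollary \ref{coro 2.5}, the three thresholds are identified with the unique root via the monotonicity and continuity in Proposition \ref{prop 2.6}(ii), the lower bound is Corollary \ref{coro 2.5}, and the upper bound is obtained exactly as in the paper by showing that $g(\beta)<0$ makes $R_{\psi,T}(X,f,d,\varphi-\beta\psi,\epsilon)$ a tail of a convergent geometric series and then invoking Theorem \ref{thm 2.4}. Your explicit remark that $n\in G_T$ forces $n>T/\|\psi\|$ just makes precise the paper's ``for sufficiently large $T$'' step; otherwise the arguments coincide.
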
	

\begin{proof}

If there exists $\beta_0\in \mathbb{R}$ such that $\overline{mdim}_{M}(X,f,d,\varphi-\beta_0\psi)=\infty$, then by Proposition \ref{prop 2.6}, $\overline{mdim}_{M}(X,f,d,
\varphi-\beta\psi)=\infty$ for all $\beta \in \mathbb{R}$.  Using  Corollary \ref{coro 2.5}, we obtain that 
	\begin{align*}
	\overline{mdim}_{M,\psi}(X,f,d,\varphi)
	&=\sup\{\beta\in \mathbb{R}:\overline{mdim}_{M}(X,f,d,\varphi-\beta\psi)\geq0\}\\
	&=\inf\{\beta\in \mathbb{R}:\overline{mdim}_{M}(X,f,d,\varphi-\beta\psi)\leq0\}
	=\inf \emptyset =\infty	.
\end{align*}

	Now, we can  assume that $\overline{mdim}_{M}(X,f,d,\varphi-\beta\psi)\in\mathbb{R}$ for any $\beta \in \mathbb{R}$. 
	
	Next, we show that
	\begin{align}\label{inequ 2.12}
		\overline{mdim}_{M,\psi}(X,f,d,\varphi)\leq\inf\{\beta\in \mathbb{R}:\overline{mdim}_{M}(X,f,d,\varphi-\beta\psi)<0\}.
	\end{align} 	
   Let $\beta \in \mathbb{R}$ with $\overline{mdim}_{M}(X,f,d,\varphi-\beta \psi)=2a<0.$
  Then there exists $0<\epsilon_0<1$ such that for any $0<\epsilon<\epsilon_0$, we can choose   $N_0$ such that for $n\geq N_0$, one has 
  $$\sup\left\{\sum_{x\in F_n}\limits (1/\epsilon)^{S_n(\varphi(x)-\beta\psi(x))}: F_n ~\mbox{is an } (n,\epsilon)\mbox{-separated set  of }X\right\}<(1/\epsilon)^{an}.$$
This  implies that  for sufficiently  large $T$, we have 
 \begin{align*}
	R_{\psi, T}(X,f,d,\varphi-\beta \psi,\epsilon)&\leq\sum_{n\geq N_0}\sup_{ F_n}\sum_{x\in F_n}\limits (1/\epsilon)^{S_n(\varphi(x)-\beta\psi(x))}\\
	&\leq \sum_{n\geq N_0}(1/\epsilon)^{an}\\
	&<\frac{1}{1-\epsilon^{-a}}.
\end{align*}
  We finally obtain that $\limsup_{\epsilon \to 0}\limits \limsup_{T \to \infty}\limits R_{\psi, T}(X,f,d,\varphi-\beta \psi,\epsilon)\leq 1$. It follows from Theorem \ref{thm 2.4} that 
    \begin{align*}
	&\inf\{\beta\in \mathbb{R}:\overline{mdim}_{M}(X,f,d,\varphi-\beta \psi)<0\}\\
	\geq &\inf\{\beta\in \mathbb{R}:\limsup_{\epsilon \to 0}\limits \limsup_{T \to \infty}\limits R_{\psi, T}(X,f,d,\varphi-\beta \psi,\epsilon)< \infty\}\\
	=&\overline{mdim}_{M,\psi}(X,f,d,\varphi).
\end{align*}

By virtue of  Proposition \ref{prop 2.6}, we know that 
\begin{align}\label{inequ 2.13}
	&\inf\{\beta\in \mathbb{R}:\overline{mdim}_{M}(X,f,d,\varphi-\beta \psi)<0\}\nonumber\\
	=&\inf\{\beta\in \mathbb{R}:\overline{mdim}_{M}(X,f,d,\varphi-\beta \psi)\leq0\}\nonumber\\
	=&\sup\{\beta\in \mathbb{R}:\overline{mdim}_{M}(X,f,d,\varphi-\beta \psi)\geq0\}.
\end{align}
	
Combining the facts (\ref{inequ 2.12}), (\ref{inequ 2.13}) and Corollary \ref{coro 2.5}, we finish the proof.

\end{proof}
Now, we  are ready to prove the Theorem \ref{thm 1.1} and Theorem \ref{thm 1.2}. 
\begin{proof}[Proof of Theorem \ref{thm 1.1}]
	By Proposition \ref{prop 2.6},  the equation $\overline{{mdim}}_{M}(X,f,d,\varphi -\beta \psi)=0$ has unique root $\beta$. By Corollary \ref{coro 2.7}, we know the root $\beta$ is exactly equal to  $\overline{{mdim}}_{M,\psi}(X,f,d,\varphi)$.
\end{proof}

\begin{proof}[Proof of  Theorem \ref{thm 1.2}]
	It suffices to show  $$\overline{{mdim}}_{M,\psi}(X,f,d,\varphi)=\sup_{\mu\in M(X,f)}\limits \left\{\frac{\overline{rdim}(X,f,d,\mu)}{\int\psi d\mu}+\frac{\int \varphi d\mu}{\int\psi d\mu}\right\},$$
	and the remaining equality can be obtained similarly.
	
	Firstly, we show \emph{LHS} $\geq$ \emph {RHS}.
	Let $\beta >\overline{{mdim}}_{M,\psi}(X,f,d,\varphi)$. By Corollary \ref{coro 2.7}, we have
	\begin{align*}
		0&\geq \overline{mdim}_{M}(X,f,d,\varphi-\beta\psi)\\
		&=\sup_{\mu\in M(X,f)}\left\{\overline {r
			dim}(X,f,d,\mu)+\int \varphi d\mu -\beta  \int \psi d\mu\right\}~~\mbox{using Theorem A }\\
		&=\sup_{\mu\in M(X,f)}\left\{\int \psi d\mu \left(\frac{\overline {rdim}(X,f,d,\mu)}{\int \psi d\mu}+\frac{\int \varphi d\mu}{\int \psi d\mu}-\beta\right) \right\},	
	\end{align*}
    which implies that  $\frac{\overline {r
		dim}(X,f,d,\mu)}{\int \psi d\mu}+\frac{\int \varphi d\mu}{\int \psi d\mu}\leq\beta$ for all $\mu \in M(X,f)$. This shows the inequality \emph{LHS} $\geq$ \emph {RHS}.

Next, we prove the converse inequality   \emph{LHS} $\leq$ \emph {RHS} by using the same method. Let $\beta <\overline{{mdim}}_{M,\psi}(X,f,d,\varphi)$. By Corollary \ref{coro 2.7}, we have
\begin{align*}
	&\overline{mdim}_{M}(X,f,d,\varphi-\beta\psi)\\
	=&\sup_{\mu\in M(X,f)}\left\{\overline {rdim}(X,f,d,\mu)+\int \varphi d\mu -\beta  \int \psi d\mu\right\}~~\mbox{using  Theorem A}\\
	=&\sup_{\mu\in M(X,f)}\left\{\int \psi d\mu \left(\frac{\overline {rdim}(X,f,d,\mu)}{\int \psi d\mu}+\frac{\int \varphi d\mu}{\int \psi d\mu}-\beta\right) \right\}	\geq 0,
\end{align*}
which yields that $\frac{\overline {rdim}(X, f, d, \mu)}{\int \psi d\mu}+\frac{\int \varphi d\mu}{\int \psi d\mu}\geq\beta$ for some  $\mu \in M(X,f)$. This shows  the inequality \emph{LHS} $\leq$ \emph {RHS}.
This completes the proof.
	
\end{proof}

\section{The metric mean dimension  with  potential on subsets}
   The section 3 is divided into four parts.  In subsection 3.1, we recall some basic definitions of upper metric mean dimension with potential  on subsets and collect some standard facts.   The subsection 3.2 is devoted to  establishing the Bowen's equations for upper metric mean dimension with potential on subsets. The subsection 3.3  is designed to obtain  variational  principles for BS  metric mean dimension and  Packing BS  metric mean dimension, and  in subsection 3.4 we  focus on the  upper metric mean dimension of generic points of ergodic measures. 

\subsection{Several types of upper metric mean dimension with potential }

We  first  recall  the definitions of the upper metric mean dimension of arbitrary subset of $X$ defined by Carath\rm$\bar {e}$odory  structures using covering method introduced by  Wang \cite{w21} and Cheng et al.  \cite{cls21}. Besides,  we  also  apply  the Packing method used in fractal geometry  to define the Packing  upper metric mean dimension with potential on subsets. Furthermore, some basic properties related by these quantities  are derived. 
\begin{definition}
	Let $0<\epsilon<1$ and $\lambda \in \mathbb{R}$. For $Z\subset X$,  $\varphi \in C(X,\mathbb{R})$ and $d\in \mathscr{D}(X)$,  we define
	$$M(f,d,Z,\varphi,\lambda,N,\epsilon)=\inf\left\{\sum_{i\in I}\limits  e^{-n_i \lambda+\log \frac{1}{\epsilon}\cdot\sup_{y\in B_{n_i}(x_i,\epsilon)}S_{n_i} \varphi(y)}\right\},$$
	where the infimum  is taken over all  finite or countable covers $\{B_{n_i}(x_i,\epsilon)\}_{i\in I}$ of $Z$ with $n_i \geq N.$
	
	$$\overline m(f,d,Z,\varphi,\lambda,N,\epsilon)=\inf\left\{\sum_{i\in I}\limits  e^{-N\lambda+\log \frac{1}{\epsilon}\cdot\sup_{y\in B_N(x_i,\epsilon)}S_N\varphi(y)}\right\},$$
	where the infimum  is taken over all  finite or countable  covers $\{B_{n_i}(x_i,\epsilon)\}_{i\in I}$ of $Z$ with $n_i = N.$

	Let
	\begin{align*}
		M(f,d,Z,\varphi,\lambda,\epsilon)&=\lim\limits_{N\to \infty}M(f,d,Z,\varphi,\lambda,N,\epsilon),\\
		\overline{m}(f,d,Z,\varphi,\lambda,\epsilon)&=\limsup_{N\to \infty} \limits \overline {m}(f,d,Z,\varphi,\lambda,N,\epsilon).	
	\end{align*}
  It is readily  to check that  $M(f,d,Z,\varphi,\lambda,\epsilon),  \overline{m}(f,d,Z,\varphi,\lambda,\epsilon)$  have a critical value  of parameter $\lambda$  jumping from $\infty$ to $0$. We  respectively denote their critical values as
	\begin{align*}
		&\overline{{mdim}}_{M,Z,f}(\varphi,d,\epsilon):=\inf\{\lambda:M(f,d,Z,\varphi,\lambda,\epsilon)=0\}=\sup\{\lambda:M(f,d,Z,\varphi,\lambda,\epsilon)=\infty\},\\
		&\overline{{upmdim}}_{M,Z,f}(\varphi,d,\epsilon):=\inf\{\lambda:\overline{m}(f,d,Z,\varphi,\lambda,\epsilon)=0\}=\sup\{\lambda:\overline{m}(f,d,Z,\varphi,\lambda,\epsilon)=\infty\}.	
	\end{align*}
  Put
  \begin{align*}
	\overline{{mdim}}_{M,Z,f}(\varphi,d)&=\limsup_{\epsilon \to 0}\frac{\overline{{mdim}}_{M,Z,f}(\varphi,d,\epsilon)}{\log \frac{1}{\epsilon}},\\
	\overline{{upmdim}}_{M,Z,f}(\varphi,d)&=\limsup_{\epsilon \to 0}\frac{\overline{{upmdim}}_{M,Z,f}(\varphi,d,\epsilon)}{\log \frac{1}{\epsilon}}.
 \end{align*}

The quantities $\overline{{mdim}}_{M,Z,f}(\varphi,d),\overline{{upmdim}}_{M,Z,f}(\varphi,d)$ are called \emph{Bowen upper metric mean dimension with potential $\varphi$, u-upper metric  mean dimension with potential $\varphi$ on the set $Z$}, respectively.  Furthermore, we  sometimes omit  $d$  in these quantities when $d$ is clear. Specially, $\overline{{mdim}}_{M}(f,Z,d):=\overline{mdim}_{M,f,Z}(0,d)$  is called the \emph{Bowen upper metric mean dimension on $Z$}.
		
\end{definition}

\begin{remark}\label{ref 3.2}
	Let $Z\subset X$. Define 
	$$\overline{mdim}_{M}(Z,f,d,\varphi):=\limsup_{\epsilon \to 0}\limsup_{n \to \infty}\frac{1}{n\log{\frac{1}{\epsilon}}}\log\inf_{E_n}\left\{\sum_{x \in E_n}\limits e^ {\log \frac{1}{\epsilon
		}\cdot S_n\varphi(x)} \right\},$$ 
	where  the infimum  $E_n$ ranges over all $ (n,\epsilon)$-spanning sets of $Z$.
	
	By a standard method, one can check 
	\begin{align}
		\overline{mdim}_{M}(Z,f,d,\varphi)&=\limsup_{\epsilon \to 0}\limsup_{n \to \infty}\frac{1}{n\log{\frac{1}{\epsilon}}}\log\sup_{F_n}\left\{\sum_{x \in F_n}\limits e^ {\log \frac{1}{\epsilon
			}\cdot S_n\varphi(x)} \right\}\\
		&=\overline{{upmdim}}_{M,Z,f}(\varphi,d),
	\end{align}
	where  the supremum  $F_n$ ranges over all $ (n,\epsilon)$-separated sets of $Z$.
	
	Using the  fact \cite[Proposition 2.2]{cls21} that if $Z$ is a $f$-invariant compact subset, then $\overline{{mdim}}_{M,Z,f}(\varphi)=  \overline{{upmdim}}_{M,Z,f}(\varphi)$. Hence $$\overline{mdim}_{M}(X,f,d,\varphi)=\overline{mdim}_{M,X,f}(\varphi, d).$$
\end{remark} 

\begin{definition}
	Let $0<\epsilon<1$ and $\lambda \in \mathbb{R}$. For $Z\subset X$, $\varphi \in C(X,\mathbb{R})$ and $d\in \mathscr{D}(X)$,  we define
	$$P(f,d,Z,\varphi,\lambda,N,\epsilon)=\sup\left\{\sum_{i\in I}\limits  e^{-n_i \lambda+\log \frac{1}{\epsilon}\cdot\sup_{y\in \overline B_{n_i}(x_i,\epsilon)}S_{n_i} \varphi(y)}\right\},$$
	where the supremum is taken over all  finite or countable  pairwise disjoint  closed  families $\{\overline B_{n_i}(x_i,\epsilon)\}_{i\in I}$ of $Z$ with $n_i \geq N, x_i\in Z.$
	
	The quantity $P(f,d,Z,\varphi,\lambda,N,\epsilon)$ is non-increasing as $N$ increases, so we define
	$$P(f,d,Z,\varphi,\lambda,\epsilon)=\lim_{N \to \infty }\limits P(f,d,Z,\varphi,\lambda,N,\epsilon).$$
	Set
	$$\mathcal P(f,d,Z,\varphi,\lambda,\epsilon)=\inf\left\{\sum_{i=1}^{\infty}P(f,d,Z_i,\varphi,\lambda,\epsilon): \cup_{i\geq 1}Z_i \supseteq Z \right\}.$$
  It is readily  to check that the quantity $\mathcal P(f,d,Z,\varphi,\lambda,\epsilon)$  has a critical value  of parameter $\lambda$  jumping from  $\infty$ to $0$. We  define the critical value as
	\begin{align*}
			&\overline{{Pmdim}}_{M,Z,f}(\varphi,d,\epsilon):=\inf\{\lambda: \mathcal P(f,d,Z,\varphi,\lambda,\epsilon)=0\}=\sup\{\lambda: \mathcal P(f,d,Z,\varphi,\lambda,\epsilon)=\infty\}.
	\end{align*}

Let  $\overline{{Pmdim}}_{M,Z,f}(\varphi,d)=\limsup_{\epsilon \to 0}\limits\frac{\overline{{Pmdim}}_{M,Z,f}(\varphi,d,\epsilon)}{\log \frac{1}{\epsilon}}.$ We call  the quantities  $\overline{{Pmdim}}_{M,Z,f}(\varphi,d),\\
 \overline{{Pmdim}}_{M}(f,Z,d):=\overline{{Pmdim}}_{M,Z,f}(0,d)$  \emph{Packing upper metric mean dimension with potential  $\varphi$ on the set $Z$,  Packing upper metric mean dimension on the set $Z$}, respectively. We  sometimes omit the metric $d$ in above quantities  when $d$ is clear.
\end{definition}

 The following proposition presents some basic properties related by these  quantities. 
\begin{proposition}
	Let  $(X,f)$ be a TDS  with a metric $d\in \mathscr{D}(X)$ and $\varphi \in C(X,\mathbb{R})$. 
	\begin {enumerate}[(i)]
	\item  	If $Z_1\subset Z_2\subset X$, then $\overline{{mdim}}_{M,Z_1,f}(\varphi)\leq \overline{{mdim}}_{M,Z_2,f}(\varphi), \overline{{upmdim}}_{M,Z_1,f}(\varphi)\leq \overline{{upmdim}}_{M,Z_2,f}(\varphi), \\
	\overline{{Pmdim}}_{M,Z_1,f}(\varphi,d)\leq \overline{{Pmdim}}_{M,Z_2,f}(\varphi,d),$

	 \item  If   $Z$ is a finite union of some $Z_i$, that is, Z=$\cup_{i=1}^{N}Z_i$, then  $\overline{{mdim}}_{M,Z,f}(\varphi)=\\ \max_{1\leq i\leq N }\limits\overline{{mdim}}_{M,Z_i,f}(\varphi),
	 \overline{{upmdim}}_{M,Z,f}(\varphi)= \max_{1\leq i\leq N }\limits\overline{{upmdim}}_{M,Z_i,f}(\varphi),
	 \overline{{Pmdim}}_{M,Z,f}(\varphi)\\= \max_{1\leq i\leq N }\limits\overline{{Pmdim}}_{M,Z_i,f}(\varphi).$
	 
	\item For  any non-empty subset $Z\subset X$,
	$$\overline{{mdim}}_{M,Z,f}(\varphi)\leq  \overline{{Pmdim}}_{M,Z,f}(\varphi)\leq \overline{{upmdim}}_{M,Z,f}(\varphi).$$
	 Further, if $Z$ is compact and $f$-invariant, then 
	 $$\overline{{mdim}}_{M,Z,f}(\varphi)=\overline{{Pmdim}}_{M,Z,f}(\varphi)=\overline{{upmdim}}_{M,Z,f}(\varphi).$$
\end{enumerate}
\end{proposition}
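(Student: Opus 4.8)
The plan is to reduce the whole proposition to monotonicity and (finite or countable) subadditivity of the Carath\'{e}odory-type set functions $M(f,d,\cdot,\varphi,\lambda,\epsilon)$, $\overline m(f,d,\cdot,\varphi,\lambda,\epsilon)$, $\mathcal P(f,d,\cdot,\varphi,\lambda,\epsilon)$ and $P(f,d,\cdot,\varphi,\lambda,N,\epsilon)$, together with the elementary geometry of Bowen balls, separated sets and packings \emph{at a fixed scale}. For (i): if $Z_1\subset Z_2$, then every cover admissible for $M$ or $\overline m$ on $Z_2$ also covers $Z_1$, and every pairwise disjoint closed family with centres in $Z_1$ is one with centres in $Z_2$; hence $M$, $\overline m$ and $P$ (so also $\mathcal P$) are monotone in the set, and this passes to the critical values $\overline{{mdim}}_{M,Z,f}(\varphi,d,\epsilon)$, etc., and then to $\limsup_{\epsilon\to0}$. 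For (ii) the inequality $\ge$ is just (i); for $\le$ one uses that $M$, $\overline m$, $\mathcal P$ are subadditive over a finite union in the set variable (glue covers/refinements), so that if $\lambda>\max_i\overline{{mdim}}_{M,Z_i,f}(\varphi,d,\epsilon)$ then $M(f,d,Z,\varphi,\lambda,\epsilon)\le\sum_{i=1}^N M(f,d,Z_i,\varphi,\lambda,\epsilon)=0$; taking critical values, dividing by $\log\frac1\epsilon$ and using that $\limsup_{\epsilon\to0}$ commutes with a finite maximum gives the equalities, and identically for $\overline{{upmdim}}$ and $\overline{{Pmdim}}$.

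For the chain in (iii) I would first treat $\overline{{Pmdim}}_{M,Z,f}(\varphi)\le\overline{{upmdim}}_{M,Z,f}(\varphi)$. Fix $0<\epsilon<1$ and put $\gamma(\epsilon)=\sup\{|\varphi(x)-\varphi(y)|:d(x,y)\le\epsilon\}$, so $\gamma(\epsilon)\to0$. Given a pairwise disjoint closed family $\{\overline B_{n_i}(x_i,\epsilon)\}_{i\in I}$ of $Z$ with $n_i\ge N$, group its balls by the common order $n_i=n$; disjointness forces $d_n(x_i,x_j)\ge\epsilon$ for distinct centres of the same order, so $\{x_i:n_i=n\}$ is an $(n,\epsilon)$-separated set of $Z$. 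Combining this with $\sup_{y\in\overline B_n(x_i,\epsilon)}S_n\varphi(y)\le S_n\varphi(x_i)+n\gamma(\epsilon)$, the total weight of the order-$n$ balls is at most $e^{-n\lambda}\,e^{n\gamma(\epsilon)\log\frac1\epsilon}$ times the separated-set sum $\sup_{F_n}\sum_{x\in F_n}(1/\epsilon)^{S_n\varphi(x)}$ computing $\overline{{upmdim}}_{M,Z,f}(\varphi,d)$ via Remark \ref{ref 3.2}. Writing $\overline s(\epsilon)$ for the upper exponential growth rate $\limsup_n\frac1{n\log\frac1\epsilon}\log\sup_{F_n}\sum_{x\in F_n}(1/\epsilon)^{S_n\varphi(x)}$, this quantity is geometrically small in $n$ as soon as $\lambda>\log\frac1\epsilon(\gamma(\epsilon)+\overline s(\epsilon))$; summing over $n\ge N$ and letting $N\to\infty$ gives $P(f,d,Z,\varphi,\lambda,\epsilon)=0$, hence $\mathcal P(f,d,Z,\varphi,\lambda,\epsilon)=0$. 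Thus $\overline{{Pmdim}}_{M,Z,f}(\varphi,d,\epsilon)\le\log\frac1\epsilon(\gamma(\epsilon)+\overline s(\epsilon))$; dividing by $\log\frac1\epsilon$, letting $\epsilon\to0$ and using $\limsup_{\epsilon\to0}\overline s(\epsilon)=\overline{{upmdim}}_{M,Z,f}(\varphi,d)$ (Remark \ref{ref 3.2}) finishes this half.

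The heart of the matter is $\overline{{mdim}}_{M,Z,f}(\varphi)\le\overline{{Pmdim}}_{M,Z,f}(\varphi)$. The key geometric observation is that a maximal $(N,\epsilon)$-separated subset $E$ of a set $W$ (finite, by compactness of $X$) is \emph{simultaneously} a cover and a packing: by maximality $\{B_N(x,\epsilon)\}_{x\in E}$ covers $W$, so it is admissible for $M(f,d,W,\varphi,\cdot,N,\epsilon)$, while $d_N(x,y)\ge\epsilon>2\cdot\frac\epsilon3$ for distinct $x,y\in E$ makes $\{\overline B_N(x,\frac\epsilon3)\}_{x\in E}$ a pairwise disjoint closed family with centres in $W$, admissible for $P(f,d,W,\varphi,\lambda,N,\frac\epsilon3)$. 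Comparing cover-weights and packing-weights term by term, the former exceeds the latter by a factor at most $e^{N(2\gamma(\epsilon)\log\frac1\epsilon+\|\varphi\|\log 3)}$, the $\log 3$ being the price of the scale change $\epsilon\mapsto\frac\epsilon3$; hence for $\mu=2\gamma(\epsilon)\log\frac1\epsilon+\|\varphi\|\log3+\sigma$ with any $\sigma>0$ one gets $M(f,d,W,\varphi,\lambda+\mu,N,\epsilon)\le e^{-N\sigma}P(f,d,W,\varphi,\lambda,N,\frac\epsilon3)$, so $P(f,d,W,\varphi,\lambda,\frac\epsilon3)<\infty$ implies $M(f,d,W,\varphi,\lambda+\mu,\epsilon)=0$ on letting $N\to\infty$. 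Now if $\lambda>\overline{{Pmdim}}_{M,Z,f}(\varphi,d,\frac\epsilon3)$, then $\mathcal P(f,d,Z,\varphi,\lambda,\frac\epsilon3)=0$, so there is a countable cover $Z=\bigcup_i Z_i$ with each $P(f,d,Z_i,\varphi,\lambda,\frac\epsilon3)<\infty$; the previous step gives $M(f,d,Z_i,\varphi,\lambda+\mu,\epsilon)=0$ for all $i$, and countable subadditivity of $M$ gives $M(f,d,Z,\varphi,\lambda+\mu,\epsilon)=0$. Hence $\overline{{mdim}}_{M,Z,f}(\varphi,d,\epsilon)\le\overline{{Pmdim}}_{M,Z,f}(\varphi,d,\frac\epsilon3)+2\gamma(\epsilon)\log\frac1\epsilon+\|\varphi\|\log3$; dividing by $\log\frac1\epsilon$ and letting $\epsilon\to0$ (using that $\log\frac3\epsilon$ and $\log\frac1\epsilon$ are asymptotically equal, so the $\frac\epsilon3$ versus $\epsilon$ mismatch washes out) yields $\overline{{mdim}}_{M,Z,f}(\varphi)\le\overline{{Pmdim}}_{M,Z,f}(\varphi)$. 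For the ``Moreover'', the chain just proved combined with $\overline{{mdim}}_{M,Z,f}(\varphi)=\overline{{upmdim}}_{M,Z,f}(\varphi)$ for compact $f$-invariant $Z$ (Remark \ref{ref 3.2}, i.e.\ \cite[Proposition 2.2]{cls21}) forces all three quantities to coincide.

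The step I expect to be the main obstacle is precisely this last inequality $\overline{{mdim}}\le\overline{{Pmdim}}$: a maximal separated set yields a cover and a packing only at \emph{different} scales ($\epsilon$ versus $\frac\epsilon3$), and this scale change inserts a factor $e^{cN}$ into the potential-weighted sums. One must absorb it by a small upward shift $\mu$ of the parameter $\lambda$, then verify that $\mu$ (carrying the term $\|\varphi\|\log3$) becomes negligible after dividing by $\log\frac1\epsilon$ and sending $\epsilon\to0$; and one must run the argument through the countable decomposition $Z=\bigcup Z_i$ built into $\mathcal P$ rather than through the pre-measure $P$ directly, reassembling the pieces by subadditivity of $M$. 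The remaining ingredients --- monotonicity, finite stability, and $\overline{{Pmdim}}\le\overline{{upmdim}}$ --- are routine.
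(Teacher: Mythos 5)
Your proposal is correct, and for the central inequality $\overline{{mdim}}_{M,Z,f}(\varphi)\leq\overline{{Pmdim}}_{M,Z,f}(\varphi)$ it is essentially the paper's argument in dual form: the paper takes a \emph{maximal pairwise disjoint family} $\{\overline B_n(x_i,\epsilon)\}$ with centres in $A$ and observes that the tripled balls $B_n(x_i,3\epsilon)$ cover $A$, whereas you take a \emph{maximal $(N,\epsilon)$-separated set} and read off simultaneously a cover at scale $\epsilon$ and a packing at scale $\epsilon/3$; in both cases the factor-$3$ scale change produces the same error terms ($\|\varphi\|\log 3$ plus a $\gamma$-modulus term), which are absorbed after dividing by $\log\frac1\epsilon$, and both arguments are routed through the countable decomposition built into $\mathcal P$ exactly as you describe. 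Where you genuinely diverge is the half $\overline{{Pmdim}}_{M,Z,f}(\varphi)\leq\overline{{upmdim}}_{M,Z,f}(\varphi)$: the paper argues from below, taking $t<s<\overline{{Pmdim}}$, extracting from a packing of total weight $>1$ a \emph{single} order $l_N$ carrying enough weight via a geometric-series splitting, and concluding that the separated-set growth rate is at least $t-\gamma(\epsilon_k)$; you instead argue from above, grouping a packing by order, noting that same-order disjoint closed balls have $(n,\epsilon)$-separated centres, and showing the pre-measure $P$ vanishes outright once $\lambda$ exceeds $(\gamma(\epsilon)+\overline s(\epsilon))\log\frac1\epsilon$. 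Your direct version is shorter and avoids the subsequence $\epsilon_k$ and the $t<s$ bookkeeping, at the cost of invoking the eventual bound $\sup_{F_n}\sum_{x\in F_n}(1/\epsilon)^{S_n\varphi(x)}\leq e^{n(\overline s(\epsilon)+\delta)\log\frac1\epsilon}$ for large $n$; both are valid, and your treatment of (i) and (ii) by monotonicity and finite subadditivity matches what the paper leaves to the reader.
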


\begin{proof}
	(i) and (ii) follow  directly from the Definitions 3.1 and 3.3.
	
	(iii)
	Let $0<\epsilon<1$ and $\gamma(4\epsilon)=\sup\{|\varphi(x)-\varphi(y)|: d(x,y)\leq 4\epsilon\}$, and let 
	$n\in \mathbb{N}$ and $A\subset X$. Let $R$ be the largest cardinality such that there exists a pairwise disjoint family $\{\overline  B_n(x_{i},\epsilon)\}_{i=1}^{R}$ with $x_i\in A$. Then 
	$$\cup_{i=1}^{R}  B_n(x_{i},3\epsilon)\supseteq A.$$
	Let $\lambda \in \mathbb{R}$, then
	\begin{align*}
	M(f,d,A,\varphi,\lambda,n,3\epsilon)&\leq \sum_{i=1}^{R} e^{-n\lambda+\log \frac{1}{3\epsilon}\cdot\sup_{y\in B_{n}(x_i,3\epsilon)} S_{n}\varphi(y)}\\
	&\leq \sum_{i=1}^{R} e^{-n\lambda+\log \frac{1}{3\epsilon}\cdot\sup_{y\in \overline B_{n}(x_i,\epsilon)} S_{n}\varphi(y)+\log \frac{1}{3\epsilon}\cdot n\gamma(4\epsilon)}\\
	&\leq \sum_{i=1}^{R} e^{-n\lambda+\log \frac{1}{\epsilon}\cdot \sup_{y\in \overline B_{n}(x_i,\epsilon)} S_{n}\varphi(y)-\log \frac{1}{3}\cdot n||\varphi||+\log \frac{1}{3\epsilon}\cdot n\gamma(4\epsilon)}\\
	&\leq P(f,d,A,\varphi,\lambda-\log 3||\varphi||-\log \frac{1}{3\epsilon}\cdot\gamma(4\epsilon),n,\epsilon).
	\end{align*}
	Hence  for any $\cup_{i\geq1}Z_i\supseteq Z$, we have 
	\begin{align*}
 M(f,d,Z,\varphi,\lambda,3\epsilon)&\leq\sum_{i\geq 1} M(f,d,Z_i,\varphi,\lambda,3\epsilon)\\
 &\leq\sum_{i\geq 1} P(f,d,Z_i,\varphi,\lambda-\log 3||\varphi||-\log \frac{1}{3\epsilon}\cdot \gamma(4\epsilon),\epsilon).
	\end{align*}
	This implies that 
	$$\overline{mdim}_{M,Z,f}(\varphi,d,3\epsilon)\leq \overline{{Pmdim}}_{M,Z,f}(\varphi, d,\epsilon)+\log 3||\varphi||+\gamma(4\epsilon)\log \frac{1}{3\epsilon}.$$
Therefore,  we  finally obtain  $\overline{{mdim}}_{M,Z,f}(\varphi)\leq \overline{{Pmdim}}_{M,Z,f}(\varphi)$.

We  continue to verify that $\overline{{Pmdim}}_{M,Z,f}(\varphi) \leq\overline{{upmdim}}_{M,Z,f}(\varphi)$. We may assume that $\overline{{Pmdim}}_{M,Z,f}(\varphi)>-\infty$, otherwise there  is nothing left  to prove. Let $-\infty<t<s<\overline{{Pmdim}}_{M,Z,f}(\varphi)$. Then we can choose a subsequence $0<\epsilon_k<1$ that convergences to 0 as $k \to \infty$ such that 
$$\overline{{Pmdim}}_{M,Z,f}(\varphi,d)=\lim_{k \to \infty}\frac{\overline{{Pmdim}}_{M,Z,f}(\varphi,d,\epsilon_k)}{\log \frac{1}{\epsilon_k}}>s.$$

Therefore,  there is $K_0 \in \mathbb{N}$ satisfying  for any $k>K_0$,
$$\overline{{Pmdim}}_{M,Z,f}(\varphi,d,\epsilon_k)>s\log \frac{1}{\epsilon_k}.$$
This means that  $P(f,d,Z,\varphi,s\log \frac{1}{\epsilon_k},\epsilon_k)\geq \mathcal{P} (f,d,Z,\varphi,s\log \frac{1}{\epsilon_k},\epsilon_k)=\infty$.

 Fix such a  $k>K_0$. For any $N \in \mathbb{N}$,  we can find a countable pairwise disjoint  family $\{\overline B_{n_i}(x_i,\epsilon_k)\}_{i \in I}$  with $x_i \in Z$ and $n_i\geq N$ such that 

$$\sum_{i\in I}  e^{-n_i \cdot s\log \frac{1}{\epsilon_k}+\log \frac{1}{\epsilon_k}\cdot\sup_{y\in \overline B_{n_i}(x_i,\epsilon_k)}S_{n_i} \varphi(y)}>1.$$

For any  $ l\geq N$, we set  $E_l=\{x_{n_i}: n_i=l, i\in I\}$. So
\begin{align*}
	&\sum_{l\geq N} \sum_{x\in E_l}  e^{-l \cdot s\log \frac{1}{\epsilon_k}+\log \frac{1}{\epsilon_k}(S_l\varphi(x)+l\gamma(\epsilon_k))}\\
	\geq &\sum_{l\geq N}\sum_{x\in E_l}  e^{-l \cdot s\log \frac{1}{\epsilon_k}+\log \frac{1}{\epsilon_k}\cdot\sup_{y\in \overline B_{l}(x,\epsilon_k)}S_{l} \varphi(y)}>1,
\end{align*}
where $\gamma(\epsilon):=\sup\{|\varphi(x)-\varphi(y)|:d(x,y)\leq\epsilon\}.$

There must exist an $l_N\geq N$ such that 
$$\sum_{x\in E_{l_N}} e^{-l_N (s-\gamma(\epsilon_k)) \log \frac{1}{\epsilon_k}+\log \frac{1}{\epsilon_k}S_{l_N}\varphi(x)}>(1-e^{(t-s)\log \frac{1}{\epsilon_k}})e^{(t-s)l_N\log \frac{1}{\epsilon_k}}.$$
Namely, we get $\sum_{x\in E_{l_N}}   (1/\epsilon_k)^{S_{l_N}\varphi(x)}>(1-e^{(t-s)\log \frac{1}{\epsilon_k}})(1/\epsilon_k)^{(t-\gamma(\epsilon_k))l_N},$
where $E_{l_N}$ is an $(l_N,\epsilon_k)$-separated set of $Z$. 
This gives us that 
$$\limsup_{N \to \infty }\frac{1}{N\log \frac{1}{\epsilon_k}}\log \sup_{E_N}\left\{\sum_{x\in E_N}(1/\epsilon_k)^{S_N\varphi(x)}\right\}\geq t-\gamma(\epsilon_k),$$
where the supremum ranges over all  $(N,\epsilon_k)$-separated sets of $Z$.
 
 Note that $\gamma(\epsilon_k) \to 0$  as $k \to \infty$, combining the fact mentioned in  remark \ref{ref 3.2}, we finally deduce that $\overline{{upmdim}}_{M,Z,f}(\varphi,d)\geq t$. Letting $t \to \overline{{Pmdim}}_{M,Z,f}(\varphi,d)$, we  get  the desired result.

The  last  statement follows   from (iii) and 
 the  fact  \cite[Proposition 2.2]{cls21} stating that if $Z$ is a $f$-invariant compact subset, then $\overline{{mdim}}_{M,Z,f}(\varphi)=  \overline{{upmdim}}_{M,Z,f}(\varphi)$.
	
\end{proof}

\subsection{Bowen's equation for upper metric mean dimension with potential on subsets}

 We begin   this subsection  with studying some basic properties  of the functions defined by the  Bowen upper metric mean dimension with potential  and Packing upper metric mean dimension with potential  on a subset of $X$. Then  we define BS  metric mean dimension and Packing BS metric mean dimension and show that  they are exactly the  unique root of  the corresponding Bowen's equations.

Given a non-empty subset $Z\subset X$ that does not need to be invariant or compact,  and  let   $\varphi \in C(X,\mathbb{R})$. Consider the following  functions 
\begin{align*}
	\phi(t)&=\overline{{mdim}}_{M,Z,f}(t\varphi,d), \\ 
	\Phi(t)&=\overline{{Pmdim}}_{M,Z,f}(t\varphi, d). 
	\end{align*}

\begin{proposition}
	Let $(X, f)$ be a TDS and   $Z\subset X$ be a  non-empty subset.  Suppose that $\varphi \in C(X,\mathbb{R})$ with $\varphi <0$. Then  for all $t \in \mathbb{R}$, one has  $\overline{{mdim}}_{M,Z,f}(t\varphi)>-\infty$,  and  $\overline{{mdim}}_{M,Z,f}(t\varphi)<\infty$ if and only if    $\overline{{mdim}}_{M}(f,Z)<\infty$.
\end{proposition}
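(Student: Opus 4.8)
The plan is to compare, at each fixed scale $\epsilon\in(0,1)$, the Carath\'eodory quantity $M(f,d,Z,t\varphi,\lambda,\epsilon)$ with the one for the zero potential, and thereby control the critical parameter $\overline{mdim}_{M,Z,f}(t\varphi,d,\epsilon)$ in terms of $\overline{mdim}_{M,Z,f}(0,d,\epsilon)$; dividing by $\log\frac1\epsilon$ and taking $\limsup_{\epsilon\to0}$ then gives both assertions simultaneously. Concretely, fix $0<\epsilon<1$, $\lambda\in\mathbb{R}$, $N\in\mathbb{N}$. For any Bowen ball $B_n(x,\epsilon)$ with $n\ge N$ and any point $y$ one has $|S_n(t\varphi)(y)|=|t|\,|S_n\varphi(y)|\le n|t|\,\|\varphi\|$, whence $\bigl|\sup_{y\in B_n(x,\epsilon)}S_n(t\varphi)(y)\bigr|\le n|t|\,\|\varphi\|$. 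Setting $c(\epsilon):=|t|\,\|\varphi\|\log\frac1\epsilon\ge0$, this yields the termwise bounds $e^{-n(\lambda+c(\epsilon))}\le e^{-n\lambda+\log\frac1\epsilon\cdot\sup_{y\in B_n(x,\epsilon)}S_n(t\varphi)(y)}\le e^{-n(\lambda-c(\epsilon))}$; summing over an arbitrary cover of $Z$ by Bowen balls of order $\ge N$, taking infima, and letting $N\to\infty$ gives
\[
M(f,d,Z,0,\lambda+c(\epsilon),\epsilon)\ \le\ M(f,d,Z,t\varphi,\lambda,\epsilon)\ \le\ M(f,d,Z,0,\lambda-c(\epsilon),\epsilon).
\]

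Since each $\lambda\mapsto M(f,d,Z,\cdot,\epsilon)$ is non-increasing and jumps from $\infty$ to $0$ at its critical value, the left inequality forces $\overline{mdim}_{M,Z,f}(t\varphi,d,\epsilon)\ge\overline{mdim}_{M,Z,f}(0,d,\epsilon)-c(\epsilon)$ and the right one forces $\overline{mdim}_{M,Z,f}(t\varphi,d,\epsilon)\le\overline{mdim}_{M,Z,f}(0,d,\epsilon)+c(\epsilon)$, i.e.
\[
\bigl|\overline{mdim}_{M,Z,f}(t\varphi,d,\epsilon)-\overline{mdim}_{M,Z,f}(0,d,\epsilon)\bigr|\ \le\ |t|\,\|\varphi\|\log\tfrac1\epsilon .
\]
I would also record the elementary bound $\overline{mdim}_{M,Z,f}(0,d,\epsilon)\ge0$: for $\lambda<0$, every cover of the nonempty set $Z$ by Bowen balls of order $\ge N$ has at least one ball contributing $e^{-n_i\lambda}\ge e^{N|\lambda|}$, so $M(f,d,Z,0,\lambda,\epsilon)=\infty$ and the critical value cannot be negative.

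Dividing the displayed inequality by $\log\frac1\epsilon$ and taking $\limsup_{\epsilon\to0}$ (using monotonicity of $\limsup$ and its commuting with adding a constant), and recalling $\overline{mdim}_{M}(f,Z,d)=\overline{mdim}_{M,Z,f}(0,d)=\limsup_{\epsilon\to0}\overline{mdim}_{M,Z,f}(0,d,\epsilon)/\log\frac1\epsilon\in[0,+\infty]$, we obtain
\[
\overline{mdim}_{M}(f,Z,d)-|t|\,\|\varphi\|\ \le\ \overline{mdim}_{M,Z,f}(t\varphi,d)\ \le\ \overline{mdim}_{M}(f,Z,d)+|t|\,\|\varphi\| .
\]
The lower bound together with $\overline{mdim}_{M}(f,Z,d)\ge0$ gives $\overline{mdim}_{M,Z,f}(t\varphi)\ge-|t|\,\|\varphi\|>-\infty$, and the two bounds together show that $\overline{mdim}_{M,Z,f}(t\varphi)<\infty$ if and only if $\overline{mdim}_{M}(f,Z)<\infty$. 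I do not anticipate a genuine obstacle here; the only points needing care are the transfer from the inequality between the $M$-quantities to the inequality between their critical values (keeping the two one-sided estimates oriented correctly) and the legitimacy of the $\limsup$ manipulation when $\overline{mdim}_{M}(f,Z)=+\infty$. (The hypothesis $\varphi<0$ is in fact not needed for this finiteness statement; it serves later only to refine the crude constant $|t|\,\|\varphi\|$ into the pair $|t|\min(-\varphi)>0$ and $|t|\max(-\varphi)$, which is what yields strict monotonicity of $t\mapsto\overline{mdim}_{M,Z,f}(t\varphi)$.)
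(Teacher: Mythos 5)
Your proof is correct and follows essentially the same route as the paper's: the two-sided comparison $e^{-n_i\lambda-|t|n_i\|\varphi\|\log\frac{1}{\epsilon}}\le e^{-n_i\lambda+t\log\frac{1}{\epsilon}\sup_{y\in B_{n_i}(x_i,\epsilon)}S_{n_i}\varphi(y)}\le e^{-n_i\lambda+|t|n_i\|\varphi\|\log\frac{1}{\epsilon}}$ is exactly the paper's argument for the finiteness equivalence. The only (harmless) difference is in the first claim: the paper shows directly that $M(f,d,Z,t\varphi,tm\log\frac{1}{\epsilon},N,\epsilon)\ge 1$ for $t\ge 0$ (with $m=\min\varphi$) and handles $t<0$ by monotonicity in the potential, whereas you deduce $\overline{mdim}_{M,Z,f}(t\varphi)>-\infty$ from the Lipschitz-type estimate together with $\overline{mdim}_{M}(f,Z)\ge 0$; both are valid, and your parenthetical observation that $\varphi<0$ is not actually needed for the finiteness statement is also correct.
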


\begin{proof}
	Set $m=\min_{x\in X}\varphi(x)$. Let  $0<\epsilon<1$ and $t\geq0$.  Then for each $N$,  we have 
	\begin{align*}
		M(f,d,Z,t\varphi,tm \log \frac{1}{\epsilon},N,\epsilon)&=\inf\left\{\sum_{i\in I}\limits  e^{-n_i tm \log \frac{1}{\epsilon}+t\log \frac{1}{\epsilon}\cdot\sup_{y\in B_{n_i}(x_i,\epsilon)}S_{n_i} \varphi(y)}\right\}\\
		&\geq \inf\left\{\sum_{i\in I}\limits  e^{-n_i tm \log\frac{1}{\epsilon}+n_i t m\log \frac{1}{\epsilon} }\right\}\geq1,
	\end{align*}
where the infimum  ranges over  all  finite or countable covers $\{B_{n_i}(x_i,\epsilon)\}_{i\in I}$ of $Z$ with $n_i \geq N$.

Therefore,  $\overline{{mdim}}_{M,Z,f}(t\varphi)\geq tm>-\infty.$ Now, fix a  $t_0>0$. Then for all $t<0$, we have $\overline{{mdim}}_{M,Z,f}(t\varphi)\geq \overline{{mdim}}_{M,Z,f}(t_0\varphi)>-\infty$ by the monotonicity of  $M(f,d,Z,\varphi,\lambda,N,\epsilon)$ with respect to  $\varphi$.	

For  the second statement,  
\begin{align*}
	\inf\left\{\sum_{i\in I}\limits  e^{-n_i \lambda-|t|n_i ||\varphi||\log \frac{1}{\epsilon}}\right\}
	&\leq
	\inf\left\{\sum_{i\in I}\limits  e^{-n_i \lambda+t\log \frac{1}{\epsilon}\cdot\sup_{y\in B_{n_i}(x_i,\epsilon)}S_{n_i} \varphi(y)}\right\}\\
	&\leq\inf\left\{\sum_{i\in I}\limits  e^{-n_i \lambda+|t|n_i ||\varphi||\log \frac{1}{\epsilon}}\right\},	
\end{align*}
where the infimum ranges over  all  finite or countable covers $\{B_{n_i}(x_i,\epsilon)\}_{i\in I}$ of $Z$ with $n_i \geq N$.

Note that $\overline{{mdim}}_{M}(f,Z)=\overline{{mdim}}_{M,Z,f}(0)$, this implies that 
$$ \overline{{mdim}}_{M}(f,Z)-|t| ||\varphi||\leq \overline{{mdim}}_{M,Z,f}(t\varphi)\leq  \overline{{mdim}}_{M}(f,Z)+|t| ||\varphi||.$$
Hence for all $t\in \mathbb{R}$, $\overline{{mdim}}_{M,Z,f}(t\varphi)<\infty$ if and only if    $\overline{{mdim}}_{M}(f,Z)<\infty$.

\end{proof}

\begin{proposition}\label{prop 3.6}
	Let $\varphi$ be a negative and continuous function on $X$. Suppose that\\
	$\overline{mdim}_{M}(f,X)<\infty$.
  Then the function $\phi(t)$ is strictly decreasing and Lipschitz, 
 the equation    $\phi(t)=0$  has unique (finite) root $s$ and  $-\frac{1}{m}  \overline{{mdim}}_{M}(f,Z) \leq s \leq -\frac{1}{M}\overline{{mdim}}_{M}(f,Z)$, where 	$m=\min_{x\in X}\varphi(x)$ and $M=\max_{x\in X}\varphi(x)$. 
\end{proposition}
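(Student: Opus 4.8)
The plan is to adapt the argument of Proposition~\ref{prop 2.6} to the subset setting, using the one-sided bounds $nm\le S_n\varphi(y)\le nM$ that are available precisely because $\varphi<0$; here, as in the statement, $m=\min_{x\in X}\varphi(x)$ and $M=\max_{x\in X}\varphi(x)$, so $m\le M<0$. The one computation needed is the following. Fix $t_1<t_2$ and $0<\epsilon<1$; for any finite or countable cover $\{B_{n_i}(x_i,\epsilon)\}_{i\in I}$ of $Z$ with $n_i\ge N$, write $S_{n_i}(t_2\varphi)(y)=S_{n_i}(t_1\varphi)(y)+(t_2-t_1)S_{n_i}\varphi(y)$ and bound $(t_2-t_1)S_{n_i}\varphi(y)$ between $(t_2-t_1)n_im$ and $(t_2-t_1)n_iM$. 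Taking suprema over $y\in B_{n_i}(x_i,\epsilon)$, multiplying by $\log\frac{1}{\epsilon}>0$, exponentiating, and taking the infimum over all such covers, one sees that $M(f,d,Z,t_2\varphi,\lambda,N,\epsilon)$ is bounded below by $M(f,d,Z,t_1\varphi,\lambda-(t_2-t_1)m\log\frac{1}{\epsilon},N,\epsilon)$ and above by $M(f,d,Z,t_1\varphi,\lambda-(t_2-t_1)M\log\frac{1}{\epsilon},N,\epsilon)$. Letting $N\to\infty$ and comparing critical values gives
\[
(t_2-t_1)m\log\tfrac{1}{\epsilon}\ \le\ \overline{mdim}_{M,Z,f}(t_2\varphi,d,\epsilon)-\overline{mdim}_{M,Z,f}(t_1\varphi,d,\epsilon)\ \le\ (t_2-t_1)M\log\tfrac{1}{\epsilon},
\]
and dividing by $\log\frac{1}{\epsilon}$ and taking $\limsup_{\epsilon\to0}$ (using $\limsup_\epsilon(a_\epsilon+c)=\limsup_\epsilon a_\epsilon+c$ for $\epsilon$-independent $c$) yields the key estimate
\[
(t_2-t_1)m\ \le\ \phi(t_2)-\phi(t_1)\ \le\ (t_2-t_1)M\qquad(t_1<t_2).
\]

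From the key estimate, read for arbitrary $t_1<t_2$, the structural claims are immediate: $\phi$ is strictly decreasing because $(t_2-t_1)M<0$, and $\phi$ is Lipschitz with constant $||\varphi||=-m$ because both $(t_2-t_1)m$ and $(t_2-t_1)M$ are negative with the former the smaller, so $|\phi(t_2)-\phi(t_1)|\le-(t_2-t_1)m=||\varphi||\,|t_2-t_1|$. For finiteness, $Z\subseteq X$ gives $\overline{mdim}_M(f,Z)\le\overline{mdim}_M(f,X)<\infty$ by monotonicity of the Bowen upper metric mean dimension in the subset, so by the preceding proposition $\phi(t)=\overline{mdim}_{M,Z,f}(t\varphi)\in\mathbb{R}$ for every $t$. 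Finally $\phi(0)=\overline{mdim}_{M,Z,f}(0,d)=\overline{mdim}_M(f,Z)=:D\ge0$: for $\lambda<0$ every Bowen-ball cover of the non-empty set $Z$ is non-empty and each summand satisfies $e^{-n_i\lambda}\ge e^{N|\lambda|}$, so $M(f,d,Z,0,\lambda,\epsilon)=\infty$, whence the critical value at level $0$ is nonnegative and $D\ge0$.

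It remains to locate the root. If $D=0$, then by strict monotonicity $s=0$ is the unique root and the claimed bounds collapse to $0\le0\le0$. If $D>0$, apply the key estimate with $t_1=0$, $t_2=t>0$ to get $D+tm\le\phi(t)\le D+tM$ for $t>0$; evaluating at $t=-D/m>0$ and $t=-D/M>0$ gives $\phi(-D/m)\ge0$ and $\phi(-D/M)\le0$, and since $m\le M<0$ we have $-D/m\le-D/M$, so continuity of $\phi$ on $[-D/m,-D/M]$ and the intermediate value theorem produce a root $s$ in this interval, which is the unique root by strict monotonicity. Hence $-\frac{1}{m}\overline{mdim}_M(f,Z)\le s\le-\frac{1}{M}\overline{mdim}_M(f,Z)$, as claimed.

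I expect the only genuine obstacle to be the sign bookkeeping in the first paragraph: since $0<\epsilon<1$ forces $\log\frac{1}{\epsilon}>0$ while $m,M<0$, the direction of every inequality must be tracked carefully when turning the cover-sum comparison into a critical-value comparison and then pushing it through $\limsup_{\epsilon\to0}$, and the additive constants $(t_2-t_1)m$, $(t_2-t_1)M$ survive the limit only because they do not depend on $\epsilon$. Everything else closely parallels the proof of Proposition~\ref{prop 2.6}.
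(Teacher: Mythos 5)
Your proposal is correct and follows essentially the same route as the paper: the same cover-sum comparison using $n_im\le S_{n_i}\varphi(y)\le n_iM$ yields the two-sided estimate $(t_2-t_1)m\le\phi(t_2)-\phi(t_1)\le(t_2-t_1)M$ (the paper's inequalities (3.3)--(3.4) with the roles of $t_1,t_2$ swapped), from which strict monotonicity, the Lipschitz bound with constant $-m$, and the intermediate-value localization of the root in $[-\overline{mdim}_M(f,Z)/m,\,-\overline{mdim}_M(f,Z)/M]$ all follow exactly as in the paper. Your extra care with the $\epsilon$-level critical values, the nonnegativity of $\phi(0)$, and the degenerate case $D=0$ only makes explicit what the paper leaves implicit.
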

\begin{proof}
	Let $t_1, t_2\in \mathbb{R}$ with $t_1>t_2$. Let $0<\epsilon<1$ and $N\in \mathbb{N}$.  Given   a  cover  $\{B_{n_i}(x_i,\epsilon)\}_{i\in I} $  of $Z$ with $n_i\geq N$, then  we have 
	\begin{align*}
		&\sum_{i\in I}\limits  e^{-n_i \lambda+t_1\log \frac{1}{\epsilon}\sup_{y\in B_{n_i}(x_i,\epsilon)}S_{n_i} \varphi(y)}\\
		\leq&\sum_{i\in I}\limits  e^{-n_i \lambda+t_2\log \frac{1}{\epsilon}\sup_{y\in B_{n_i}(x_i,\epsilon)}S_{n_i} \varphi(y)+(t_1-t_2)n_iM\log \frac{1}{\epsilon}}.	
	\end{align*}
	From this relation,  we deduce that 
	\begin{align}\label{inequ 3.3}
		\overline{{mdim}}_{M,Z,f}(t_1\varphi)\leq \overline{{mdim}}_{M,Z,f}(t_2\varphi)+(t_1-t_2)M,
	\end{align}
		which implies that $\phi(t)$ is strictly decreasing with respect to $t$ on $\mathbb{R}$.
	
	Similarly, 
	\begin{align}\label{inequ 3.4}
		\overline{{mdim}}_{M,Z,f}(t_2\varphi)+(t_1-t_2)m \leq \overline{{mdim}}_{M,Z,f}(t_1\varphi).
	\end{align}	
	Taking Lipschitz constant  $L:=-m$, we see that 
	
	$$|\overline{{mdim}}_{M,Z,f}(t_1\varphi)- \overline{{mdim}}_{M,Z,f}(t_2\varphi)| \leq L|t_1-t_2|.$$
	Letting $t_1=h>0, t_2=0$ in (\ref{inequ  3.3}),  then
	$$\overline{{mdim}}_{M,Z,f}(h\varphi)\leq \overline{{mdim}}_{M}(f,Z)-h(-M).$$
	
	Therefore, $\overline{{mdim}}_{M,Z,f}((\frac{1}{-M}\overline{{mdim}}_{M}(f,Z))\cdot\varphi)\leq0.$
	Again, letting $t_1=h>0, t_2=0$ in (\ref{inequ 3.4}), then
	$$\overline{{mdim}}_{M,Z,f}(h\varphi)\geq \overline{{mdim}}_{M}(f,Z)-h(-m).$$
	This gives us that $\overline{{mdim}}_{M,Z,f}((\frac{1}{-m}\overline{{mdim}}_{M}(f,Z))\cdot\varphi)\geq0.$
	
	Using the intermediate value theorem of continuous function, we know that the equation $\phi(t)=0$ has unique non-negative root $s$  and 
	$$-\frac{\overline{{mdim}}_{M}(f,Z)}{m}\leq s\leq -\frac{\overline{{mdim}}_{M}(f,Z)}{M}<\infty.$$

\end{proof}
By  slightly  modifying  the  method used in Proposition  3.6,  we have the following 
\begin{proposition}
	Let $\varphi$ be a negative and continuous function on $X$. Suppose that\\ $ \overline{Pmdim}_{M}(f,X) 
	<\infty$. Then the function $\Phi(t)$  is   strictly decreasing and Lipschitz. Moreover,  the equation  $\Phi(t)=0$   has unique root. 
\end{proposition}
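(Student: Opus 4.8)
The plan is to follow the proof of Proposition~\ref{prop 3.6} almost verbatim, the only genuinely new point being that the packing quantity $\overline{Pmdim}_{M,Z,f}$ is built from the set function $P(f,d,Z,\cdot,\lambda,N,\epsilon)$ by \emph{two} further operations (the limit $N\to\infty$ and the Carath\'eodory-type infimum over countable covers $\bigcup_i Z_i\supseteq Z$ defining $\mathcal P$), so I must make sure the perturbation estimate survives both. Throughout write $m=\min_{x\in X}\varphi(x)$ and $M=\max_{x\in X}\varphi(x)$, so that $m\le\varphi\le M<0$ and hence $nm\le S_n\varphi(y)\le nM$ for all $n$ and $y$.

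First I would prove the basic comparison estimate. Fix $0<\epsilon<1$, $N\in\mathbb N$, $\lambda\in\mathbb R$, and $t_1>t_2$. For any pairwise disjoint closed family $\{\overline B_{n_i}(x_i,\epsilon)\}_{i\in I}$ with $x_i\in Z$ and $n_i\ge N$, splitting $t_1 S_{n_i}\varphi=t_2 S_{n_i}\varphi+(t_1-t_2)S_{n_i}\varphi$ and bounding the last summand by $(t_1-t_2)n_i M$ (resp.\ by $(t_1-t_2)n_i m$) gives
\begin{align*}
\sum_{i\in I} e^{-n_i\lambda+\log\frac{1}{\epsilon}\cdot t_1\sup_{y\in\overline B_{n_i}(x_i,\epsilon)}S_{n_i}\varphi(y)}\ \le\ \sum_{i\in I} e^{-n_i(\lambda-(t_1-t_2)M\log\frac{1}{\epsilon})+\log\frac{1}{\epsilon}\cdot t_2\sup_{y\in\overline B_{n_i}(x_i,\epsilon)}S_{n_i}\varphi(y)},
\end{align*}
and the reverse inequality with $M$ replaced by $m$. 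Since the energy shift $\lambda\mapsto\lambda-(t_1-t_2)M\log\frac{1}{\epsilon}$ is uniform (it depends neither on $i$, nor on the family, nor on the cover), it commutes with taking the supremum over families, with the limit $N\to\infty$, with summing over a cover, and with the infimum over covers; hence
$$\mathcal P(f,d,Z,t_1\varphi,\lambda,\epsilon)\ \le\ \mathcal P\big(f,d,Z,t_2\varphi,\lambda-(t_1-t_2)M\log\tfrac{1}{\epsilon},\epsilon\big),$$
together with the analogue in which $M$ is replaced by $m$ and the inequality reversed. Comparing the critical values of $\lambda$, dividing by $\log\frac{1}{\epsilon}$ and taking $\limsup_{\epsilon\to0}$, I obtain $\Phi(t_2)+(t_1-t_2)m\ \le\ \Phi(t_1)\ \le\ \Phi(t_2)+(t_1-t_2)M$.

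From here the conclusions follow exactly as in Proposition~\ref{prop 3.6}. Since $M<0$, the right-hand inequality shows $\Phi$ is strictly decreasing, and the two inequalities together give $|\Phi(t_1)-\Phi(t_2)|\le(-m)\,|t_1-t_2|$, so $\Phi$ is Lipschitz with constant $-m$. For finiteness I would note that $\Phi(0)=\overline{Pmdim}_{M,Z,f}(0,d)\le\overline{Pmdim}_M(f,X)<\infty$ by the monotonicity in the set variable established earlier (since $Z\subseteq X$), while $\Phi(0)\ge0$ because, for any nonempty piece $Z_i$ of a cover and any $\lambda<0$, the single-ball family $\{\overline B_N(x,\epsilon)\}$ with $x\in Z_i$ forces $P(f,d,Z_i,0,\lambda,N,\epsilon)\ge e^{-N\lambda}\to\infty$, whence $\mathcal P(f,d,Z,0,\lambda,\epsilon)=\infty$ and the critical value is $\ge0$; thus $\Phi(0)\in[0,\infty)$, and by the Lipschitz bound $\Phi(t)\in\mathbb R$ for every $t$. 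Finally, if $\Phi(0)=0$ then $0$ is the root; if $\Phi(0)>0$, putting $t_2=0$, $t_1=h>0$ in the displayed inequality yields $\Phi(-\Phi(0)/M)\le 0\le\Phi(-\Phi(0)/m)$, so by continuity (Lipschitz) and strict monotonicity the equation $\Phi(t)=0$ has a unique root $s$, which moreover satisfies $-\overline{Pmdim}_M(f,Z)/m\le s\le-\overline{Pmdim}_M(f,Z)/M$. The only delicate step — the ``hard part'' — is exactly the bookkeeping showing that the uniform energy shift really does pass through both the $N\to\infty$ limit and the infimum over covers in the definition of $\mathcal P$; everything else is a transcription of the argument for $\phi(t)$ in Proposition~\ref{prop 3.6}.
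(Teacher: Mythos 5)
Your proposal is correct and follows exactly the route the paper intends: the paper gives no separate argument for this proposition, stating only that it follows "by slightly modifying the method used in Proposition 3.6," and your write-up supplies precisely those modifications (the uniform shift $\lambda\mapsto\lambda-(t_1-t_2)M\log\frac{1}{\epsilon}$ passing through the supremum over disjoint families, the limit $N\to\infty$, and the infimum over covers defining $\mathcal P$). The resulting two-sided estimate $\Phi(t_2)+(t_1-t_2)m\le\Phi(t_1)\le\Phi(t_2)+(t_1-t_2)M$ and the intermediate-value argument are the same as in the paper's treatment of $\phi(t)$.
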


Analogous to the setting of BS dimension \cite{bs00} and Packing BS dimension  \cite{wc12} on arbitrary subset defined by Carath\rm$\bar {e}$odory  structures,  we  define  two new notions  called  BS metric mean dimension and  Packing  BS metric mean dimension on  subsets.

\begin{definition}
	 For $0<\epsilon<1,N\in \mathbb{N}, \lambda \in \mathbb{R}$, $Z\subset X$ and $\varphi\in C(X,\mathbb{R})$ with $\varphi>0, d \in \mathscr{D}(X)$, we define
	$$R(f,d,\varphi,\lambda, Z,N,\epsilon)=\inf\left\{\sum_{i\in I}e^{-\lambda\sup_{y\in B_{n_i}(x_i,\epsilon)} S_{n_i}\varphi(y)}\right\},$$
where the infimum  is taken over all  finite or countable covers $\{B_{n_i}(x_i,\epsilon)\}_{i\in I}$ of $Z$ with $n_i \geq N.$

Since	$R(f,d,\varphi,\lambda, Z,N,\epsilon)$ is non-decreasing  as $N$ increases,  we define 
$$R(f,d,\varphi,\lambda, Z,\epsilon)=\lim_{N\to \infty}R(f,d,\varphi,\lambda, Z,N,\epsilon).$$
There is a critical value of the parameter $\lambda$ that jumps from $\infty$ to 0.  We define such  critical value  $R(X,f,d,\varphi, Z,\epsilon)$ as 
\begin{align*}
	R(f,d,\varphi, Z,\epsilon)&=\inf\{\lambda: R(f,d,\varphi,\lambda, Z,\epsilon)=0\},\\
	&=\sup\{\lambda:R(f,d,\varphi,\lambda, Z,\epsilon)=\infty\}.
\end{align*}
Let $$\overline{BSmdim}_{M,Z,f}(\varphi,d)=\limsup_{\epsilon \to 0}\limits \frac{R(f,d,\varphi, Z,\epsilon)}{\log\frac{1}{\epsilon}}.$$

 The quantity \emph{$\overline{BSmdim}_{M,Z,f}(\varphi,d)$ is said to be BS  metric  mean  dimension on the set $Z$ with respect to $\varphi$ (or simply BS metric mean dimension).} We sometimes  omit $d$ and write $\overline{BSmdim}_{M,Z,f}(\varphi)$  instead of $\overline{BSmdim}_{M,Z,f}(\varphi,d)$ when  $d$ is clear.

\end{definition}

\begin{definition}
         For $0<\epsilon<1,N\in \mathbb{N}, \lambda \in \mathbb{R}$, $Z\subset X$ and $\varphi\in C(X,\mathbb{R})$ with $\varphi>0, d \in \mathscr{D}(X)$, we define
		$$P_p(f,d,\varphi,Z,\lambda,N,\epsilon)=\sup\left\{\sum_{i\in I}\limits  e^{-\lambda\sup_{y\in B_{n_i}(x_i,\epsilon)}S_{n_i} \varphi(y)}\right\},$$
		where the supremum is taken over all  finite or countable  pairwise disjoint  closed  families $\{\overline B_{n_i}(x_i,\epsilon)\}_{i\in I}$ of $Z$ with $n_i \geq N, x_i\in Z$.
		
		The quantity $P_p(f,d,\varphi,Z,\lambda,N,\epsilon)$ is non-increasing as $N$ increases, so we define
		$$P_p(f,d,\varphi,Z,\lambda,\epsilon)=\lim_{N \to \infty }\limits P_p(f,d,\varphi,Z,\lambda,N,\epsilon).$$
		Define 
		$$\mathcal P_p(f,d,\varphi,Z,\lambda,\epsilon)=\inf\{\sum_{i=1}^{\infty}P_p(f,d,\varphi,Z_i,\lambda,\epsilon): \cup_{i\geq 1}Z_i \supseteq Z \}.$$
	The quantity $\mathcal P_p(f,d,\varphi,Z,\lambda,\epsilon)$  has a critical value  of parameter $\lambda$  jumping from $\infty$ to $0$. We  define such  critical value as
		\begin{align*}
			\overline{{BSPmdim}}_{M,Z,f}(\varphi,d,\epsilon):&=\inf\{\lambda: \mathcal P_p(f,d,\varphi,Z,\lambda,\epsilon)=0\}\\
			&=\sup\{\lambda:\mathcal P_p(f,d,\varphi,Z,\lambda,\epsilon)=+\infty\}.
		\end{align*}
		Let 
		\begin{align*}
			\overline{{BSPmdim}}_{M,Z,f}(\varphi,d)&=\limsup_{\epsilon \to 0}\frac{\overline{{BSPmdim}}_{M,Z,f}(\varphi,d,\epsilon)}{\log \frac{1}{\epsilon}}.
		\end{align*}
	
	 \emph{$\overline{{BSPmdim}}_{M,Z,f}(\varphi,d)$ is called Packing   BS metric mean dimension  on the set $Z$ with respect to $\varphi$ (or simply Packing BS metric mean dimension)}, and we sometimes  omit $d$ and write $\overline{BSPmdim}_{M,Z,f}(\varphi)$  instead of $\overline{BSPmdim}_{M,Z,f}(\varphi,d)$ when  $d$ is clear.	
	
\end{definition}
\begin{remark}
	\begin {enumerate}[(i)]
	\item For any $Z\subset X$, $0\leq \overline{{BSmdim}}_{M,Z,f}(\varphi)\leq \overline{{BSPmdim}}_{M,Z,f}(\varphi).$
	\item 
	$\overline{BSmdim}_{M,Z,f}(1)=\overline{mdim}_{M}(f,Z),~
	\overline{BSPmdim}_{M,Z,f}(1)=\overline{Pmdim}_{M}(f,Z).$
	
\end{enumerate}
	
\end{remark}

 We now are ready to verify the Theorem 1.3.
\begin{proof}[Proof of   Theorem \ref{thm 1.3}]
	 Let $0<\epsilon <1$. Note that  for each $N$, $$M(f,d,Z,- \frac{\lambda\varphi}{\log\frac{1}{\epsilon}},0,N,\epsilon)=R(f,d,\varphi, Z,\lambda,N,\epsilon).$$
	Let $s>\overline{BSmdim}_{M,Z,f}(\varphi)$. Then  
	$R(f,d,\varphi, Z,s\log\frac{1}{\epsilon},\epsilon)<1$ for sufficiently small $\epsilon>0$. Hence  $M(f,d,Z,- s\varphi, 0,\epsilon)<1$, which implies that $\overline{{mdim}}_{M,Z,f}(-s\varphi)
	\leq0$. Using  the continuity  of $\phi$ obtained  in Proposition \ref{prop 3.6},   we obtain  $$\overline{{mdim}}_{M,Z,f}(-\overline{BSmdim}_{M,Z,f}(\varphi)\cdot\varphi)\leq0$$
	after letting $s\to\overline{BSmdim}_{M,Z,f}(\varphi)$.
	
	Let $s<\overline{BSmdim}_{M,Z,f}(\varphi)$. There exists a subsequence $0<\epsilon_k<1$   that convergences to 0  as $k \to \infty$ such that 
	$$\overline{{BSmdim}}_{M,Z,f}(\varphi,d)=\lim_{k \to \infty }\frac{\overline{{BSmdim}}_{M,Z,f}(\varphi,d,\epsilon_k)}{\log \frac{1}{\epsilon_k}}.$$ 
	It follows that  $$R(f,d,\varphi, Z,s\log\frac{1}{\epsilon_k},\epsilon_k)>1$$
	for  all sufficiently  large $k$. 
	This shows $M(f,d,Z,- s\varphi, 0,\epsilon_k)>1$. Similarly, we can deduce  that 
	
	$$\overline{{mdim}}_{M,Z,f}(-\overline{BSmdim}_{M,Z,f}(\varphi)\cdot\varphi)\geq0.$$
	Hence, by Proposition \ref{prop 3.6},    $\overline{BSmdim}_{M,Z,f}(\varphi)$ is the unique root of  the equation $\overline{{mdim}}_{M,Z,f}(-t\varphi)=0$.
	
	Using the relation  $\mathcal{P}(f,d,Z,-\frac{\lambda}{\log \frac{1}{\epsilon}}\varphi,0,\epsilon)=\mathcal{P}_p(f,d,\varphi,Z,\lambda,\epsilon)$, one can similarly deduce that  $\overline{BSPmdim}_{M,Z,f}(\varphi)$ is the unique root of  the equation 
	$\overline{{Pmdim}}_{M,Z,f}(-t\varphi)=0$.

\end{proof}

The  following corollary shows that  the   BS metric mean dimension  is a special case of $\psi$-induced upper metric mean dimension with potential  $0$.

\begin{corollary}
	Let $(X,f)$ be a TDS with a metric $d\in \mathscr{D}(X)$ and $\psi\in C(X,\mathbb{R})$ with $\psi>0$.  Then  $$\overline{{mdim}}_{M,\psi}(X,f,d,0)=\overline{BSmdim}_{M,X,f}(\psi).$$
\end{corollary}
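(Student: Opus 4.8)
The plan is to identify both sides with the same ``critical exponent'' object, namely the root of a Bowen equation, and then invoke the uniqueness of that root. On the left, Theorem \ref{thm 1.1} applies with $\varphi=0$ and with $\psi$ in the role of the weight function: since $\overline{mdim}_M(X,f,d,0)=\overline{mdim}_M(X,f,d)<\infty$ is assumed (this is the standing hypothesis needed for Theorem 1.1; if it fails, both sides are $+\infty$ by Remark~2.3(iii) together with Proposition~3.6-type reasoning, and the identity is trivial), we get that $\overline{mdim}_{M,\psi}(X,f,d,0)$ is the unique root $\beta$ of the equation $\overline{mdim}_M(X,f,d,-\beta\psi)=0$. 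On the right, Theorem \ref{thm 1.3}(i) applied with $Z=X$ and potential $\psi>0$ tells us that $\overline{BSmdim}_{M,X,f}(\psi)$ is the unique root $t$ of the equation $\overline{mdim}_{M,X,f}(-t\psi)=0$.

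The key bridging fact is Remark \ref{ref 3.2}: for a compact $f$-invariant set (in particular $Z=X$), the Bowen upper metric mean dimension with potential on a subset coincides with the ordinary upper metric mean dimension with potential, i.e. $\overline{mdim}_{M,X,f}(-t\psi)=\overline{mdim}_M(X,f,d,-t\psi)$. Hence the two Bowen equations above are \emph{literally the same equation} in the variable. By Proposition \ref{prop 2.6}(ii) (applied with $\varphi=0$, so that $\overline{mdim}_M(X,f,d,-\beta\psi)=0$ has a unique finite root) the root is unique, and therefore $\overline{mdim}_{M,\psi}(X,f,d,0)$ and $\overline{BSmdim}_{M,X,f}(\psi)$ must agree.

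More explicitly, I would write: by Theorem \ref{thm 1.1}, $\overline{mdim}_{M,\psi}(X,f,d,0)$ is the unique root of $\overline{mdim}_M(X,f,d,-\beta\psi)=0$; by Remark \ref{ref 3.2} this equation coincides with $\overline{mdim}_{M,X,f}(-\beta\psi)=0$; by Theorem \ref{thm 1.3}(i) with $Z=X$ and potential $\psi$, the unique root of the latter is $\overline{BSmdim}_{M,X,f}(\psi)$; uniqueness of the root (Proposition \ref{prop 2.6} or Proposition \ref{prop 3.6}) then forces the two quantities to be equal.

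The only genuine point to check is the applicability of the cited theorems: Theorem \ref{thm 1.1} requires $\overline{mdim}_M(X,f,d,\varphi)<\infty$ with $\varphi=0$, i.e.\ $\overline{mdim}_M(X,f,d)<\infty$, and Theorem \ref{thm 1.3}(i) requires $\overline{mdim}_M(f,X,d)<\infty$ — these are the \emph{same} hypothesis, so the argument is internally consistent; and Theorem \ref{thm 1.3}(i) needs the potential to be strictly positive, which holds since $\psi>0$ by assumption. So the main (minor) obstacle is just handling the degenerate case $\overline{mdim}_M(X,f,d)=\infty$ separately, where one shows both sides equal $+\infty$ directly from the definitions — $\overline{BSmdim}_{M,X,f}(\psi)=\infty$ because the defining covering sums diverge for every $\lambda$ once the $\lambda=0$ sums already grow, and $\overline{mdim}_{M,\psi}(X,f,d,0)=\infty$ by Remark~2.3 combined with Corollary~\ref{coro 2.7}. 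Aside from that, the proof is a one-line composition of results already established in the paper.
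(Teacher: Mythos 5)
Your proposal is correct and follows essentially the same route as the paper: identify $\overline{mdim}_{M,\psi}(X,f,d,0)$ via Theorem \ref{thm 1.1} and $\overline{BSmdim}_{M,X,f}(\psi)$ via Theorem \ref{thm 1.3}(i) as roots of the same Bowen equation (using Remark \ref{ref 3.2} to identify $\overline{mdim}_{M,X,f}(-t\psi)$ with $\overline{mdim}_M(X,f,d,-t\psi)$), conclude by uniqueness of the root, and treat the case $\overline{mdim}_M(f,X,d)=\infty$ separately by showing both sides are $\infty$ (the paper makes your sketch of that step precise via the pointwise bound $e^{-\lambda\sup_{y}S_{n_i}\psi(y)}\geq e^{-\lambda Mn_i}$ with $M=\max\psi$, giving $\overline{BSmdim}_{M,X,f}(\psi)\geq \overline{mdim}_M(f,X,d)/M$).
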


\begin{proof}
	If $\overline{{mdim}}_M(f,X,d)=\infty$, by Remark \ref{ref 3.2}, then $$\overline{{mdim}}_M(f,X,d)=\overline{{mdim}}_{M,X,f}(0,d)=\overline{{mdim}}_{M,X,f}(0\cdot (-\psi),d)=\overline{{mdim}}_{M}(X,f,d,0\cdot(-\psi),d)=\infty.$$
Taking $\varphi=0$ in Proposition \ref{prop 2.6},  we  get
$$\overline{{mdim}}_{M}(X,f,d,-\beta \psi,d)=\infty$$
for all $\beta \in \mathbb{R}$.  By  Corollary \ref{coro 2.7}, we have $$\overline{{mdim}}_{M,\psi}(X,f,d,0)=\inf\{\beta\in \mathbb{R}:\overline{mdim}_{M}(X,f,d,-\beta\psi)\leq0\}=\inf\emptyset =\infty.$$

Set $M:=\max_{x\in X}\psi (x)>0$ and $\lambda \geq 0$.  For each $0<\epsilon<1$ and $N\in \mathbb{N}$, 
\begin{align*}
	R(f,d,\psi,\lambda,X,N,\epsilon)&=\inf\left\{\sum_{i\in I}e^{-\lambda\sup_{y \in B_{n_i}(x_i,\epsilon)} S_{n_i}\psi(y)}\right\}\\\
	&\geq \inf \left\{\sum_{i\in I} e^{-\lambda M n_i }\right\}=M(f,d,X,0,M\lambda,N,\epsilon),
\end{align*}
where the infimum  is taken over all  finite or countable covers $\{B_{n_i}(x_i,\epsilon)\}_{i\in I}$ of $X$ with $n_i \geq N.$

From this relation, we finally get  that 
$$\infty=\frac{\overline{{mdim}}_{M}(f,X,d)}{M}\leq \overline{{BSmdim}}_{M,X,f}(\psi,d).$$
 Therefore, 
 $$\overline{{mdim}}_{M,\psi}(X,f,d,0)=\overline{BSmdim}_{M,X,f}(\psi,d)=\infty.$$
 
For the case $\overline{{mdim}}_M(f,X,d)<\infty$, by remark \ref{ref 3.2}, we have 
$\overline{{mdim}}_M(f,X,d)=\overline{{mdim}}_M(X,f,d)\\
<\infty$.  By Theorem \ref{thm 1.1},  we have  $$\overline{{mdim}}_{M,X,f}(-\overline{{mdim}}_{M,\psi}(X,f,d,0)\cdot\psi,d )=\overline{{mdim}}_{M}(X,f,d,-\overline{{mdim}}_{M,\psi}(X,f,d,0)\cdot\psi,d )=0.$$ 
Combing with Theorem \ref{thm 1.3},  we obtain 
 $$\overline{{mdim}}_{M,\psi}(X,f,d,0)=\overline{BSmdim}_{M,X,f}(\psi,d)$$
 by  the uniqueness of the root of  the equation.
\end{proof}

	As a direct consequence of Theorem \ref{thm 1.2} and Corollary 3.11, we  have established  a variational principle for BS  metric mean dimension as follows. 

\begin{corollary}
Let $(X,f)$ be a  TDS  admitting marker property and $\psi \in C(X,\mathbb{R})$ with $\psi>0$. Then  for all $d\in \mathscr{D}^{'}(X)$, one has
	\begin{align*}
		\overline{BSmdim}_{M,X,f}(\psi,d)&=\sup_{\mu\in M(X,f)}\limits \left\{\frac{\underline{rdim}(X,f,d,\mu)}{\int\psi d\mu}\right\}\\
		&=\sup_{\mu\in M(X,f)}\limits \left\{\frac{\overline{rdim}(X,f,d,\mu)}{\int\psi d\mu}\right\}.
	\end{align*}
\end{corollary}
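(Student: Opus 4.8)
The plan is to derive this corollary directly from two earlier results: the variational principle in Theorem~\ref{thm 1.2} (applied with a \emph{zero} potential $\varphi \equiv 0$) and the identification $\overline{{mdim}}_{M,\psi}(X,f,d,0)=\overline{{BSmdim}}_{M,X,f}(\psi,d)$ established in Corollary~3.11. First I would set $\varphi = 0$ in Theorem~\ref{thm 1.2}. Since $(X,f)$ admits the marker property and $\psi \in C(X,\mathbb{R})$ with $\psi > 0$, the hypotheses of Theorem~\ref{thm 1.2} are met, and for every $d \in \mathscr{D}^{'}(X)$ it gives
\begin{align*}
\overline{{mdim}}_{M,\psi}(X,f,d,0)
&=\sup_{\mu\in M(X,f)} \left\{\frac{\underline{rdim}(X,f,d,\mu)}{\int\psi\, d\mu}+\frac{\int 0\, d\mu}{\int\psi\, d\mu}\right\}\\
&=\sup_{\mu\in M(X,f)} \left\{\frac{\overline{rdim}(X,f,d,\mu)}{\int\psi\, d\mu}+\frac{\int 0\, d\mu}{\int\psi\, d\mu}\right\}.
\end{align*}
The potential terms $\int 0\, d\mu = 0$ vanish, so the right-hand sides simplify to $\sup_{\mu} \{\underline{rdim}(X,f,d,\mu)/\int\psi\, d\mu\}$ and $\sup_{\mu} \{\overline{rdim}(X,f,d,\mu)/\int\psi\, d\mu\}$ respectively.

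Next I would invoke Corollary~3.11, which asserts $\overline{{mdim}}_{M,\psi}(X,f,d,0)=\overline{{BSmdim}}_{M,X,f}(\psi,d)$ for any TDS with metric $d$ and any $\psi \in C(X,\mathbb{R})$ with $\psi > 0$ (this holds with no marker hypothesis and in particular for every $d \in \mathscr{D}^{'}(X)$). Substituting this identity into the left-hand side of the display above immediately yields the two claimed equalities. The proof is therefore essentially a one-line chaining of the two cited results, and I would present it in two or three sentences: apply Theorem~\ref{thm 1.2} with $\varphi = 0$, observe that the integral-of-potential terms drop out, and then rewrite the left side using Corollary~3.11.

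I do not expect any genuine obstacle here — this is a clean specialization. The only point requiring a moment's care is checking that the hypotheses of Theorem~\ref{thm 1.2} are genuinely inherited: the marker property is assumed in the statement of this corollary, $\psi > 0$ is assumed, and $d \in \mathscr{D}^{'}(X)$ is the quantifier we are proving the statement under, so Theorem~\ref{thm 1.2} applies verbatim. One might also note for completeness that when $\overline{{mdim}}_M(X,f,d)=\infty$ both sides are $+\infty$ (as recorded inside the proof of Corollary~3.11), so the equality is vacuously consistent; but this is already subsumed by the cited results and needs no separate argument. Hence the corollary follows.
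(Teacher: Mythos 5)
Your proposal is correct and matches the paper's own derivation exactly: the paper states this corollary as "a direct consequence of Theorem 1.2 and Corollary 3.11," i.e.\ specialize Theorem 1.2 to $\varphi=0$ and identify $\overline{mdim}_{M,\psi}(X,f,d,0)$ with $\overline{BSmdim}_{M,X,f}(\psi,d)$ via Corollary 3.11. No gaps.
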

\subsection{Variational principles for BS and    Packing  BS  metric mean dimension on subsets}
  In Corollary 3.12, we  have established a variational principle for BS  metric mean dimension on the whole phase space in terms of rate distortion dimensions over invariant measures. In this subsection, we proceed to establish the variational principles for BS  metric mean dimension and  Packing  BS metric  mean dimension on subsets.  The following abundant critical ingredients are due to \cite{wc12}.
\begin{definition}\label{def 3.13}\cite[Definition 3.8]{wc12}
	Let  $\mu \in M(X)$,  $\varphi \in C(X,\mathbb{R})$ with $\varphi >0$, we define 
	
	\begin{align*}
		\underline{h}_{\varphi,\mu}(f,\epsilon)&=\int \liminf_{n \to \infty}-\frac{\log \mu(B_n(x,\epsilon))}{S_n\varphi(x) }d\mu,\\
		\overline{h}_{\varphi,\mu}(f,\epsilon)&=\int \limsup_{n \to \infty}-\frac{\log \mu(B_n(x,\epsilon))}{S_n\varphi(x) }d\mu.
	\end{align*} 

Let $\underline{h}_{\varphi,\mu}(f)=\lim_{\epsilon \to 0} \limits \underline{h}_{\varphi,\mu}(f,\epsilon)$,  $\overline{h}_{\varphi,\mu}(f)=\lim_{\epsilon \to 0} \limits \overline{h}_{\varphi,\mu}(f,\epsilon)$. We call the quantities $\underline{h}_{\varphi,\mu}(f),  \overline{h}_{\varphi,\mu}(f)$ \emph{the measure-theoretical  lower and upper BS  entropies  of $\mu$}, respectively.
\end{definition}

\begin{remark}
	If $\mu \in  E (X,f)$, by Birkhoff  ergodic theorem and Brin-Katok formula, then  $\underline{h}_{\varphi,\mu}(f)=\overline{h}_{\varphi,\mu}(f)=\frac{h_{\mu}(f)}{\int \varphi d\mu}$. When $\varphi =1$, the  measure-theoretical  lower and upper BS  entropies  of $\mu$  is reduced to the classical Brin-Katok  entropy formula \cite{bk83}. 
	
\end{remark}

\begin{lemma}\label{lem 3.15}\rm {\cite[Theorem 2.1]{m95}}
	Let $(X,d)$ be a compact metric space. Suppose that $\mathcal{B}=\{B(x_i, r_i)\}_{i\in I}$ is a family of open (or closed) balls in $X$. Then  there exists a finite or countable subfamily $\mathcal{B}^{'}=\{B(x_i,r_i)\}_{i\in I^{'}}$ of  pairwise disjoint  balls in $\mathcal{B}$ such that 
	
	$$\cup_{B\in \mathcal{B}}B \subseteq \cup_{i\in I^{'}}B(x_i,5r_i).$$
\end{lemma}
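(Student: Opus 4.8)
The plan is to prove this by the classical dyadic greedy selection that underlies every ``$5r$-covering'' argument. First I would dispose of a degenerate case: since $X$ is compact we have $D:=\operatorname{diam}(X)<\infty$, so if some $r_{i_0}>D$ then $B(x_{i_0},r_{i_0})=X$ and $\mathcal B'=\{B(x_{i_0},r_{i_0})\}$ already works; hence I may assume $R:=\sup_{i\in I}r_i<\infty$. Next I would stratify $\mathcal B$ by size, setting, for each $n\ge1$,
$$\mathcal A_n=\bigl\{B(x_i,r_i):i\in I,\ R2^{-n}<r_i\le R2^{-n+1}\bigr\},$$
so that $\mathcal B=\bigcup_{n\ge1}\mathcal A_n$.

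Then I would construct $\mathcal B'$ one generation at a time. Let $\mathcal B'_1$ be a maximal (under inclusion) pairwise disjoint subfamily of $\mathcal A_1$; having chosen $\mathcal B'_1,\dots,\mathcal B'_{n-1}$, let $\mathcal B'_n$ be a maximal pairwise disjoint subfamily of those balls in $\mathcal A_n$ that are disjoint from every member of $\mathcal B'_1\cup\cdots\cup\mathcal B'_{n-1}$. Such maximal families exist by Zorn's lemma, since a union of a chain of pairwise disjoint subfamilies is again pairwise disjoint. Put $\mathcal B'=\bigcup_{n\ge1}\mathcal B'_n$; it is pairwise disjoint by construction, and it is finite or countable because $X$ is separable while each selected ball (being of positive radius) has nonempty interior, so distinct members of $\mathcal B'$ must contain distinct points of a fixed countable dense subset of $X$.

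For the covering inclusion, take any $B(x_i,r_i)\in\mathcal B$, say $B(x_i,r_i)\in\mathcal A_n$. Maximality of $\mathcal B'_n$ forces $B(x_i,r_i)$ to intersect some $B(x_j,r_j)$ lying in $\mathcal B'_1\cup\cdots\cup\mathcal B'_n$: if it were disjoint from every member of $\mathcal B'_1\cup\cdots\cup\mathcal B'_{n-1}$ it would be eligible at stage $n$, hence must meet $\mathcal B'_n$, and if it already met an earlier generation there is nothing more to check. The generation of $B(x_j,r_j)$ is at most $n$, so $r_j>R2^{-n}$, while $r_i\le R2^{-n+1}=2\cdot R2^{-n}$; hence $r_i<2r_j$. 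Choosing $z$ in the intersection gives $d(x_i,x_j)\le d(x_i,z)+d(z,x_j)\le r_i+r_j$, so for every $y\in B(x_i,r_i)$,
$$d(y,x_j)\le d(y,x_i)+d(x_i,x_j)\le 2r_i+r_j<5r_j,$$
i.e. $B(x_i,r_i)\subseteq B(x_j,5r_j)$. Taking the union over all $B\in\mathcal B$ yields $\bigcup_{B\in\mathcal B}B\subseteq\bigcup_{i\in I'}B(x_i,5r_i)$, which is the assertion.

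There is no deep obstacle here, but the one point that must be gotten right --- and the reason a single maximal pairwise disjoint subfamily of $\mathcal B$ will \emph{not} suffice --- is the control on the ratio of radii: without the dyadic grouping a ball could meet a selected ball vastly smaller than itself and the enlargement factor would blow up. The stratification guarantees that any ball meeting an already-selected ball of the same or an earlier generation meets one at least half its size, and that is exactly what pins the constant at $5$. The remaining ingredients (Zorn's lemma for the maximal families, separability for countability, the triangle inequality, and the uniform treatment of open and closed balls via the non-strict inequalities above) are routine.
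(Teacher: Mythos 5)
Your proof is correct, and since the paper simply cites this as \cite[Theorem 2.1]{m95} without reproducing an argument, your dyadic stratification plus greedy maximal selection is exactly the standard proof of that cited result; the radius comparison $r_i<2r_j$ and the triangle-inequality computation pinning the constant at $5$ are all in order, as are the appeals to Zorn's lemma and separability. No gaps.
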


\begin{definition}
	Let $\varphi \in C(X,\mathbb{R})$ with $\varphi >0$ and $\psi$ be a non-negative bound function on $X$, and let $\lambda \in \mathbb{R}$ and $N\in \mathbb{N}, \epsilon >0$. Define 
	$$W(f,d,\varphi, \psi,\lambda,N,\epsilon)=\inf\{\sum_{i\in I}c_i e^{-\lambda \sup_{y\in B_{n_i}(x_i,\epsilon)}S_{n_i}\varphi (x)}\},$$
	where the infimum ranges over all finite or countable families $\{(B_{n_i}(x_i,\epsilon),c_i)\}_{i \in I}$ satisfying  $0<c_i<\infty$, $x_i \in X$, $n_i \geq N$, and 
	$$\sum_{i\in I}\limits c_i \chi_{B_{n_i}(x_i,\epsilon)}\geq \psi,$$
	where $\chi_{E}$ denotes the characteristic function of $E$.
	
	For $Z\subset X$, set $W(f,d,\varphi,Z,\lambda,N,\epsilon):=W(f,d,\varphi,\chi_Z,\lambda,N,\epsilon)$. Since the quantity \\$W(f,d,\varphi,Z,\lambda,N,\epsilon)$ is non-decreasing as $N$ increases,  so we define 

$$W(f,d,\varphi,Z,\lambda,\epsilon)=\lim_{N \to \infty } W(f,d,\varphi,Z,\lambda,N,\epsilon).$$
There is a critical value of $\lambda$ so that  $W(f,d,\varphi,Z,\lambda,\epsilon)$ jumps from $\infty$ to $0$. We define such  critical value as 
\begin{align*}
	\overline {Wmdim}_{M,Z,f}(\varphi,d,\epsilon):&=\inf\{\lambda : W(f,d,\varphi,Z,\lambda,\epsilon)=0\},\\
	&=\sup\{\lambda : W(f,d,\varphi,Z,\lambda,\epsilon)=\infty\}.
\end{align*}

Let  $	\overline {Wmdim}_{M,Z,f}(\varphi,d)=\limsup_{\epsilon \to 0} \frac{	\overline {Wmdim}_{M,Z,f}(\varphi,d,\epsilon)}{\log \frac{1}{\epsilon}}$,   and we call  the quantity  $\overline {Wmdim}_{M,Z,f}(\varphi,d)$ \emph{the weighted BS  metric mean dimension on the set $Z$ with respect  to $\varphi$}.

\end{definition}
Wang and  Chen \cite[Lemma 5.1]{wc12} proved the following proposition.
\begin{proposition}
		Let $(X,f)$ be a TDS with a metric $d\in \mathscr{D}(X)$ and $\varphi \in C(X,\mathbb{R})$ with $\varphi>0$, and let  $0<\epsilon <1$ and $Z\subset X$. Then 
	
	$$R(f,d,\varphi,Z,\lambda+\delta, N,6\epsilon)\leq W(f,d,\varphi,Z,\lambda,N,\epsilon)\leq R(f,d,\varphi,Z,\lambda,N,\epsilon)$$
	 holds for  all $\lambda >0$, $\delta >0.$
	 Consequently,  $\overline{BSmdim}_{M,Z,f}(\varphi,d)=\overline {Wmdim}_{M,Z,f}(\varphi,d).$
\end{proposition}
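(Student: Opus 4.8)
The plan is to establish the two-sided estimate at each fixed scale $\epsilon$ and each fixed level $N$, and then to let $N\to\infty$, divide by $\log\frac1\epsilon$ and pass $\epsilon\to0$ and $\delta\to0$ to recover the equality of the two mean dimensions. The right-hand inequality $W(f,d,\varphi,Z,\lambda,N,\epsilon)\le R(f,d,\varphi,Z,\lambda,N,\epsilon)$ is immediate: given any finite or countable cover $\{B_{n_i}(x_i,\epsilon)\}_{i\in I}$ of $Z$ with $n_i\ge N$, the family $\{(B_{n_i}(x_i,\epsilon),1)\}_{i\in I}$ satisfies $\sum_{i\in I}\chi_{B_{n_i}(x_i,\epsilon)}\ge\chi_Z$, hence is admissible in the definition of $W$, and its cost $\sum_{i\in I}e^{-\lambda\sup_{y\in B_{n_i}(x_i,\epsilon)}S_{n_i}\varphi(y)}$ agrees with the corresponding term for $R$; taking the infimum over covers gives the claim.

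For the left-hand inequality I would follow \cite[Lemma 5.1]{wc12} (itself modelled on \cite{fh12}), turning a weighted family into an honest cover by means of the $5r$-covering lemma, Lemma~\ref{lem 3.15}. Fix an admissible family $\{(B_{n_i}(x_i,\epsilon),c_i)\}_{i\in I}$ for $W(f,d,\varphi,Z,\lambda,N,\epsilon)$; we may assume $I$ is countable and, replacing each $c_i$ by $\min\{c_i,1\}$ (which preserves $\sum_i c_i\chi_{B_{n_i}(x_i,\epsilon)}\ge\chi_Z$ and only lowers the cost), that $0<c_i\le1$. Stratify: for $n\ge N$ and $k\ge1$ put $I_{n,k}=\{i\in I:n_i=n,\ 2^{-k}<c_i\le2^{-k+1}\}$ and fix positive numbers $t_{n,k}$ with $\sum_{n\ge N,\,k\ge1}t_{n,k}<1$. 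Since $\sum_{i\in I}c_i\chi_{B_{n_i}(x_i,\epsilon)}(x)\ge1$ for every $x\in Z$, each such $x$ lies in $Z_{n,k}:=\{x\in Z:\sum_{i\in I_{n,k}}c_i\chi_{B_{n_i}(x_i,\epsilon)}(x)>t_{n,k}\}$ for at least one pair $(n,k)$, so $Z=\bigcup_{n,k}Z_{n,k}$, and inside a fixed stratum all the relevant balls $B_n(x_i,\epsilon)$ are genuine $\epsilon$-balls of the metric $d_n$ and cover $Z_{n,k}$.

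Applying Lemma~\ref{lem 3.15} inside $(X,d_n)$ to $\{B_n(x_i,\epsilon):i\in I_{n,k}\}$ produces a pairwise disjoint subfamily indexed by $J_{n,k}\subseteq I_{n,k}$ whose $5$-dilations cover $\bigcup_{i\in I_{n,k}}B_n(x_i,\epsilon)\supseteq Z_{n,k}$, so $\{B_n(x_i,5\epsilon):i\in J_{n,k},\ n\ge N,\ k\ge1\}$ is a cover of $Z$ by Bowen balls of order $\ge N$; since $\sup_{B_n(x_i,6\epsilon)}S_n\varphi\ge\sup_{B_n(x_i,5\epsilon)}S_n\varphi\ge\sup_{B_n(x_i,\epsilon)}S_n\varphi$, this cover a fortiori bounds $R(f,d,\varphi,Z,\lambda+\delta,N,6\epsilon)$. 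To control its cost I would write $e^{-(\lambda+\delta)\sup_{B_n(x_i,5\epsilon)}S_n\varphi}\le e^{-\delta nm}\,e^{-\lambda\sup_{B_n(x_i,\epsilon)}S_n\varphi}$ with $m=\min_X\varphi>0$, and then use the lower bound $c_i>2^{-k}$ on $J_{n,k}$ together with the disjointness of $\{B_n(x_i,\epsilon)\}_{i\in J_{n,k}}$ to compare $\sum_{n,k}\sum_{i\in J_{n,k}}e^{-(\lambda+\delta)\sup_{B_n(x_i,5\epsilon)}S_n\varphi}$ with $\sum_{i\in I}c_i\,e^{-\lambda\sup_{B_{n_i}(x_i,\epsilon)}S_{n_i}\varphi}$, the factor $e^{-\delta nm}$ summing the series over $n$. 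I expect this bookkeeping — making sure the gain from the disjointness of the retained balls and the summability constants $t_{n,k}$ exactly absorb the loss incurred by passing to the $5$-dilated cover and by the dyadic stratification of the weights — to be the one genuinely delicate point, and the place where the hypotheses $\delta>0$ and $6\epsilon$ (rather than $\epsilon$) are used; here I would transcribe \cite[Lemma 5.1]{wc12} essentially verbatim. Taking the infimum over admissible families yields $R(f,d,\varphi,Z,\lambda+\delta,N,6\epsilon)\le W(f,d,\varphi,Z,\lambda,N,\epsilon)$.

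Finally, letting $N\to\infty$ in the double inequality (both $R(\cdots,N,\cdot)$ and $W(\cdots,N,\cdot)$ being monotone in $N$) gives $R(f,d,\varphi,\lambda+\delta,Z,6\epsilon)\le W(f,d,\varphi,Z,\lambda,\epsilon)\le R(f,d,\varphi,\lambda,Z,\epsilon)$ for all $\lambda>0,\ \delta>0$. Comparing the critical values of $\lambda$ at which each quantity jumps from $\infty$ to $0$, and letting $\delta\to0$, yields $R(f,d,\varphi,Z,6\epsilon)\le\overline{Wmdim}_{M,Z,f}(\varphi,d,\epsilon)\le R(f,d,\varphi,Z,\epsilon)$. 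Dividing by $\log\frac1\epsilon$, using $\log\frac1{6\epsilon}/\log\frac1\epsilon\to1$ as $\epsilon\to0$ together with the cofinality of $\{6\epsilon\}$ as $\epsilon\to0$, and taking $\limsup_{\epsilon\to0}$ then gives $\overline{BSmdim}_{M,Z,f}(\varphi,d)\le\overline{Wmdim}_{M,Z,f}(\varphi,d)$ from the left estimate and $\overline{Wmdim}_{M,Z,f}(\varphi,d)\le\overline{BSmdim}_{M,Z,f}(\varphi,d)$ from the right one, hence equality.
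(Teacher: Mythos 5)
Your right-hand inequality, the monotone passage $N\to\infty$, the comparison of critical values, and the final $\limsup_{\epsilon\to 0}$ argument (using that $\epsilon\mapsto 6\epsilon$ is cofinal and $\log\frac{1}{6\epsilon}/\log\frac{1}{\epsilon}\to 1$) are all correct, and the overall architecture matches the source: the paper itself gives no proof of this proposition and simply cites \cite[Lemma 5.1]{wc12}, whose argument in turn follows Feng--Huang \cite{fh12}.

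However, the sketch you give of the hard inequality is not the argument of \cite{wc12} and, as written, does not close. After the dyadic stratification $I_{n,k}$ and a \emph{single} application of Lemma \ref{lem 3.15} to $\{B_n(x_i,\epsilon):i\in I_{n,k}\}$, the selected disjoint subfamily $J_{n,k}$ comes with no bound on its size or cost: the only estimate available from $c_j>2^{-k}$ is $\sum_{j\in J_{n,k}}e^{-\lambda\sup_{B_j}S_n\varphi}\le 2^{k}\sum_{i\in I_{n,k}}c_i e^{-\lambda\sup_{B_i}S_n\varphi}$, and the factor $2^{k}$ is unbounded in $k$ while the only gain you have, $e^{-\delta n m}$, is independent of $k$; the thresholds $t_{n,k}$ appear in the definition of $Z_{n,k}$ but never enter the estimate, because a one-shot Vitali selection retains no memory of the covering multiplicity (a retained ball need not even meet $Z_{n,k}$). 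The actual mechanism in \cite{fh12,wc12} stratifies only in $n$, with threshold $t_n=\frac{1}{n(n+1)}$, and proves the quantitative covering claim: if every point of $Z_n$ is covered with total weight $>t_n$ by balls of order $n$, then after reducing to (rational, hence equal) weights one \emph{iterates} Lemma \ref{lem 3.15} roughly $1/t_n$ times, producing that many mutually disjoint subfamilies whose $5\epsilon$-dilations each cover $Z_n$, and selects the cheapest one by pigeonhole; this yields a disjoint covering subfamily $J_n$ with $\sum_{j\in J_n}e^{-\lambda\sup_{B_n(x_j,\epsilon)}S_n\varphi}\le\frac{1}{t_n}\sum_{i\in I_n}c_i e^{-\lambda\sup_{B_n(x_i,\epsilon)}S_n\varphi}$. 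Only then is the polynomial loss $n(n+1)$ absorbed by $e^{-\delta n m}$ for all $n\ge N$ with $N$ large (harmless after $N\to\infty$), which is where $\delta>0$ is genuinely used. So either reproduce that iterated-selection-plus-pigeonhole step explicitly, or state the inequality outright as \cite[Lemma 5.1]{wc12} as the paper does; the stratification-in-$k$ shortcut should be removed.
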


\begin{lemma}[\bf BS  Frostman's lemma]\rm{\cite[Lemma 6.1]{wc12}}
	Let $K$ be a non-empty compact subset of $X$ and $ \lambda \geq0, \epsilon >0, N\in \mathbb{N}$, $\varphi\in C(X,\mathbb{R})$ with $\varphi >0$. Suppose that $c:=W(f,d,\varphi, K,\lambda,N,\epsilon)>0$. Then there exists a Borel probability measure $\mu \in M(X)$ such that $\mu (K)=1$ and 
	
	$$\mu(B_n(x,\epsilon))\leq \frac{1}{c}e^{-\lambda S_n \varphi (x)}$$
	holds for all $x\in K, n\geq N$.
\end{lemma}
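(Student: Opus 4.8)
The plan is to deduce the BS Frostman lemma from a minimax (Sion) argument for finitely many Bowen balls, and then pass to the full family of balls by a weak-$*$ compactness (finite intersection) argument, in the spirit of \cite{fh12,wc12}. First I would record that $c=W(f,d,\varphi,K,\lambda,N,\epsilon)$ is not only positive by hypothesis but also finite: since $K$ is compact and $\{B_N(x,\epsilon):x\in K\}$ is an open cover of $K$, a finite subcover with constant weights $c_i\equiv1$ is admissible in the definition of $W$, so $c<\infty$. Put $\mathcal{M}(K):=\{\mu\in M(X):\mu(K)=1\}$; because $K$ is closed this is a convex, weak-$*$ compact subset of $M(X)$, and for every open $U\subseteq X$ the map $\mu\mapsto\mu(U)$ is lower semicontinuous, hence each set $\{\mu\in\mathcal{M}(K):\mu(B_n(x,\epsilon))\le c^{-1}e^{-\lambda S_n\varphi(x)}\}$ is closed. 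By the finite intersection property it therefore suffices to prove: for every finite list of pairs $(x_1,n_1),\dots,(x_k,n_k)$ with $x_j\in K$ and $n_j\ge N$ there is a $\mu\in\mathcal{M}(K)$ with $\mu(B_{n_j}(x_j,\epsilon))\le c^{-1}e^{-\lambda S_{n_j}\varphi(x_j)}$ for all $j$.

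For this finite problem, set $b_j:=e^{-\lambda S_{n_j}\varphi(x_j)}>0$ and consider
$$A:=\min_{\mu\in\mathcal{M}(K)}\ \max_{1\le j\le k} b_j^{-1}\mu\bigl(B_{n_j}(x_j,\epsilon)\bigr)=\min_{\mu\in\mathcal{M}(K)}\ \max_{p\in\Delta_k}\ \sum_{j=1}^{k}p_j\,b_j^{-1}\mu\bigl(B_{n_j}(x_j,\epsilon)\bigr),$$
where $\Delta_k$ denotes the $k$-simplex (the minimum is attained since a finite maximum of lower semicontinuous functions is lower semicontinuous and $\mathcal{M}(K)$ is compact). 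The function $(\mu,p)\mapsto\sum_j p_j b_j^{-1}\mu(B_{n_j}(x_j,\epsilon))$ is affine in each variable, lower semicontinuous in $\mu$ on the compact convex set $\mathcal{M}(K)$, and continuous in $p$ on the compact convex set $\Delta_k$, so Sion's minimax theorem applies; since a linear functional on $\mathcal{M}(K)$ is minimized at a point mass, this yields
$$A=\max_{p\in\Delta_k}\ \min_{y\in K}\ \sum_{j:\,y\in B_{n_j}(x_j,\epsilon)}p_j\,b_j^{-1}.$$
Now fix $p^{*}\in\Delta_k$ attaining the outer maximum, so that $\sum_{j:\,y\in B_{n_j}(x_j,\epsilon)}p^{*}_j b_j^{-1}\ge A$ for all $y\in K$; equivalently, the family $\{(B_{n_j}(x_j,\epsilon),c_j)\}_{j=1}^{k}$ with $c_j:=A^{-1}p^{*}_j b_j^{-1}$ satisfies $\sum_j c_j\chi_{B_{n_j}(x_j,\epsilon)}\ge\chi_K$ and is thus admissible in the definition of $W(f,d,\varphi,K,\lambda,N,\epsilon)$. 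Its cost is $A^{-1}\sum_j p^{*}_j e^{-\lambda(\sup_{B_{n_j}(x_j,\epsilon)}S_{n_j}\varphi-S_{n_j}\varphi(x_j))}\le A^{-1}\sum_j p^{*}_j=A^{-1}$, where the inequality uses $\lambda\ge0$ together with $x_j\in B_{n_j}(x_j,\epsilon)$. Hence $c\le A^{-1}$, i.e.\ $A\le 1/c$, and the $\mu$ attaining $A$ satisfies $\mu(B_{n_j}(x_j,\epsilon))\le A\,b_j\le c^{-1}e^{-\lambda S_{n_j}\varphi(x_j)}$ for every $j$, which is exactly what the finite reduction demands.

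Applying the finite intersection property to the closed sets above then produces a single $\mu\in M(X)$ with $\mu(K)=1$ and $\mu(B_n(x,\epsilon))\le c^{-1}e^{-\lambda S_n\varphi(x)}$ for all $x\in K$ and all $n\ge N$, completing the proof. I expect the only genuinely delicate points to be: (i) being careful that $\mu\mapsto\mu(B_n(x,\epsilon))$ is merely lower semicontinuous (the Bowen balls are open), which is precisely why one must invoke the minimax theorem in Sion's form and pass to the infinite family of balls via compactness rather than via a continuity argument; and (ii) the bookkeeping in the admissible family, where replacing $\sup_{B_{n_j}(x_j,\epsilon)}S_{n_j}\varphi$ by the center value $S_{n_j}\varphi(x_j)$ only lowers the cost because $\lambda\ge0$, so the estimate $c\le A^{-1}$ survives. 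The remaining verifications are routine.
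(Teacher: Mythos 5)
Your argument is correct, modulo two harmless omissions: you need $A>0$ before forming the weights $c_j=A^{-1}p_j^{*}b_j^{-1}$ (but if $A=0$ the minimizing $\mu$ already satisfies the finite constraints, so this case is trivial), and since the definition of $W$ requires $0<c_i<\infty$ you should discard the indices with $p_j^{*}=0$, which does not affect the covering inequality $\sum_j c_j\chi_{B_{n_j}(x_j,\epsilon)}\ge\chi_K$. It is, however, a genuinely different route from the one the paper relies on: the paper does not prove the lemma at all but quotes it from \cite[Lemma 6.1]{wc12}, whose proof (modelled on Feng--Huang's dynamical Frostman lemma and ultimately on Howroyd's proof of the classical one) considers the positively homogeneous, subadditive functional $g\mapsto c^{-1}W(f,d,\varphi,\chi_K\cdot g,\lambda,N,\epsilon)$ on $C(X)$, extends the tautological linear functional on the constants by Hahn--Banach, recovers $\mu$ from the Riesz representation theorem, and gets the ball estimate by approximating $\chi_{B_n(x,\epsilon)}$ from below by continuous functions. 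You replace Hahn--Banach and Riesz by an LP-duality (Sion minimax) argument on finitely many balls, followed by weak-$*$ compactness of $\{\mu\in M(X):\mu(K)=1\}$ and the finite intersection property. Your version makes the duality between admissible weighted covers and measures completely explicit and correctly isolates the one analytic subtlety (that $\mu\mapsto\mu(B_n(x,\epsilon))$ is only lower semicontinuous, which is exactly why the constraint sets are closed and why Sion's theorem, rather than a continuity argument, is needed); the functional-analytic proof is shorter because a single application of Hahn--Banach handles all balls at once without the reduction to finite subfamilies. Either way the conclusion and the constant $1/c$ are the same, so your proof is a valid self-contained substitute for the citation.
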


The following proposition   can be  proved  by following the line of  the  first part of the  proof given in \cite[Theorem 7.2]{wc12}.
\begin{proposition}
	Let $(X,f)$ be a TDS with a metric $d\in \mathscr{D}(X)$ and $\varphi \in C(X,\mathbb{R})$ with $\varphi>0$. Let  $K$ be a non-empty compact subset of $X$. Then 	for any  $0<\epsilon<1$ and  $\mu \in M(X)$ with  $\mu (K)=1$, we have  $(1-\frac{\gamma(2\epsilon)}{m})\underline{h}_{\varphi,\mu}(f,\epsilon) \leq R(f,d,\varphi, K,\frac{\epsilon}{2}),$ where $m=\min_{x\in X} \limits \varphi(x)>0$.	
\end{proposition}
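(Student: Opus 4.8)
The plan is to bound $\underline{h}_{\varphi,\mu}(f,\epsilon)$ from above by extracting, from the definition of the integral $\int \liminf_n -\frac{\log\mu(B_n(x,\epsilon))}{S_n\varphi(x)}\,d\mu$, a covering of $K$ by Bowen balls of radius $\tfrac{\epsilon}{2}$ whose associated $R$-sum stays bounded, which forces a lower bound on the critical value $R(f,d,\varphi,K,\tfrac{\epsilon}{2})$. First I would fix $0<\epsilon<1$, set $m=\min_X\varphi>0$, $\gamma(2\epsilon)=\sup\{|\varphi(x)-\varphi(y)|:d(x,y)\le 2\epsilon\}$, and pick $\lambda < (1-\tfrac{\gamma(2\epsilon)}{m})\underline{h}_{\varphi,\mu}(f,\epsilon)$; the goal is to show $R(f,d,\varphi,K,\lambda,\tfrac{\epsilon}{2})=\infty$, whence $\lambda\le R(f,d,\varphi,K,\tfrac{\epsilon}{2})$ and letting $\lambda$ increase to the bound gives the claim.

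Next I would introduce, for $\ell\in\mathbb{N}$, the set
\[
A_\ell=\Bigl\{x\in K:\ \mu(B_n(x,\tfrac{\epsilon}{2}))\le e^{-\lambda' S_n\varphi(x)}\ \text{for all }n\ge \ell\Bigr\},
\]
where $\lambda'$ is chosen strictly between $\lambda$ and $(1-\tfrac{\gamma(2\epsilon)}{m})\underline{h}_{\varphi,\mu}(f,\epsilon)$, adjusted so that $\liminf_n -\tfrac{\log\mu(B_n(x,\epsilon/2))}{S_n\varphi(x)} > \lambda'$ holds on a set of positive $\mu$-measure. Here the passage from radius $\epsilon$ to radius $\tfrac{\epsilon}{2}$ (since the statement compares $\underline h_{\varphi,\mu}(f,\epsilon)$ with $R(f,d,\varphi,K,\tfrac{\epsilon}{2})$) is harmless because $B_n(x,\tfrac{\epsilon}{2})\supseteq$ nothing automatically — rather one uses that the $\liminf$ quantity is monotone in the radius, so $\liminf_n -\tfrac{\log\mu(B_n(x,\epsilon/2))}{S_n\varphi(x)} \ge \liminf_n -\tfrac{\log\mu(B_n(x,\epsilon))}{S_n\varphi(x)}$; the factor $(1-\tfrac{\gamma(2\epsilon)}{m})$ comes in when one replaces $S_n\varphi(x)$ by $\sup_{y\in B_n(x,\epsilon/2)}S_n\varphi(y)$, which costs at most $n\gamma(\epsilon)$ and hence a multiplicative distortion $1-\tfrac{\gamma(\epsilon)}{m}$ on the exponent since $S_n\varphi(x)\ge nm$. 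Since $A_\ell\uparrow$ a set of positive measure $a>0$, choose $\ell_0$ with $\mu(A_{\ell_0})>0$ and restrict attention to $A:=A_{\ell_0}$.

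Now given any $N\ge \ell_0$ and any countable cover $\{B_{n_i}(x_i,\tfrac{\epsilon}{2})\}_{i\in I}$ of $A$ (one may assume each ball meets $A$, so pick $z_i\in A\cap B_{n_i}(x_i,\tfrac{\epsilon}{2})$), the doubling inclusion $B_{n_i}(x_i,\tfrac{\epsilon}{2})\subseteq B_{n_i}(z_i,\epsilon)$ together with the defining inequality on $A$ gives $\mu(B_{n_i}(x_i,\tfrac{\epsilon}{2}))\le \mu(B_{n_i}(z_i,\epsilon))\le e^{-\lambda' S_{n_i}\varphi(z_i)}\le e^{-\lambda'\sup_{y\in B_{n_i}(x_i,\epsilon/2)}S_{n_i}\varphi(y)+\lambda' n_i\gamma(\epsilon)}$, and after absorbing the distortion as above (with $\lambda$ in place of $\lambda'$, using $\lambda<\lambda'(1-\tfrac{\gamma(\epsilon)}{m})\cdot$something — here I'd be a little careful to route the constant $\gamma(2\epsilon)$ correctly, matching the radii the statement actually uses) one gets
\[
0<a\le \mu(A)\le \sum_{i\in I}\mu(B_{n_i}(x_i,\tfrac{\epsilon}{2}))\le \sum_{i\in I}e^{-\lambda\sup_{y\in B_{n_i}(x_i,\epsilon/2)}S_{n_i}\varphi(y)}.
\]
Taking the infimum over covers yields $R(f,d,\varphi,\lambda,A,N,\tfrac{\epsilon}{2})\ge a$ for every $N$, hence $R(f,d,\varphi,\lambda,A,\tfrac{\epsilon}{2})\ge a>0$; by monotonicity in the parameter $\lambda$ this forces $R(f,d,\varphi,\lambda',A,\tfrac{\epsilon}{2})=\infty$ for every $\lambda'<\lambda$, so $R(f,d,\varphi,A,\tfrac{\epsilon}{2})\ge\lambda$, and by $A\subseteq K$ monotonicity, $R(f,d,\varphi,K,\tfrac{\epsilon}{2})\ge\lambda$. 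Finally I'd let $\lambda\uparrow (1-\tfrac{\gamma(2\epsilon)}{m})\underline h_{\varphi,\mu}(f,\epsilon)$.

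The main obstacle I anticipate is purely bookkeeping rather than conceptual: getting the radius-doubling ($\tfrac{\epsilon}{2}\to\epsilon$, or $\epsilon\to 2\epsilon$) and the oscillation estimate $|S_{n}\varphi(y)-S_n\varphi(x)|\le n\gamma(\cdot)$ to combine into exactly the claimed factor $1-\tfrac{\gamma(2\epsilon)}{m}$ and exactly the radius $\tfrac{\epsilon}{2}$ on the $R$ side — the constant $\gamma(2\epsilon)$ strongly suggests that the covering balls of radius $\tfrac{\epsilon}{2}$ get inflated to radius $\epsilon$ and then compared on a set defined via $\epsilon$-balls, with one more $\tfrac{\epsilon}{2}\to\epsilon\to 2\epsilon$ step hidden in the measure comparison. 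Everything else is the standard Frostman-type mass-distribution argument already used for BS dimension in \cite{wc12}, adapted by dividing the exponent by $S_n\varphi$ and tracking that $S_n\varphi(x)\ge nm$ converts additive $n\gamma$-errors into multiplicative $\tfrac{\gamma}{m}$-errors.
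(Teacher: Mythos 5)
Your overall strategy is the standard Frostman/mass-distribution argument, which is exactly what the paper uses (it simply refers to the first part of the proof of \cite[Theorem 7.2]{wc12}). However, one step fails as written: you define
$A_\ell=\{x\in K:\ \mu(B_n(x,\tfrac{\epsilon}{2}))\le e^{-\lambda'S_n\varphi(x)}\ \forall n\ge\ell\}$
with balls of radius $\tfrac{\epsilon}{2}$, but in the covering step you invoke $\mu(B_{n_i}(z_i,\epsilon))\le e^{-\lambda'S_{n_i}\varphi(z_i)}$ for $z_i\in A$, i.e.\ the inequality for balls of radius $\epsilon$. Your definition of $A_\ell$ only controls the smaller ball $B_{n_i}(z_i,\tfrac{\epsilon}{2})$, which is useless after the inclusion $B_{n_i}(x_i,\tfrac{\epsilon}{2})\subseteq B_{n_i}(z_i,\epsilon)$, so the chain of inequalities breaks. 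The detour through radius $\tfrac{\epsilon}{2}$ in the definition of $A_\ell$ is both unnecessary and the source of the inconsistency: since $\underline h_{\varphi,\mu}(f,\epsilon)$ is defined with $\epsilon$-balls, you should set $A_\ell=\{x\in K:\ \mu(B_n(x,\epsilon))\le e^{-\lambda'S_n\varphi(x)}\ \forall n\ge\ell\}$ directly. Because $s<\int g\,d\mu$ forces $\mu(\{g>s\})>0$, some $A_{\ell_0}$ has positive measure, and then $\mu(B_{n_i}(x_i,\tfrac{\epsilon}{2}))\le\mu(B_{n_i}(z_i,\epsilon))\le e^{-\lambda'S_{n_i}\varphi(z_i)}$ is legitimate.

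With that correction the rest of your bookkeeping closes: for $y,z_i\in B_{n_i}(x_i,\tfrac{\epsilon}{2})$ one has $d_{n_i}(y,z_i)<\epsilon$, hence $S_{n_i}\varphi(z_i)\ge\sup_{y}S_{n_i}\varphi(y)-n_i\gamma(\epsilon)\ge\bigl(1-\tfrac{\gamma(2\epsilon)}{m}\bigr)\sup_{y}S_{n_i}\varphi(y)$, using $n_im\le\sup_{y}S_{n_i}\varphi(y)$ and $\gamma(\epsilon)\le\gamma(2\epsilon)$. Choosing $\lambda'$ with $\lambda/(1-\tfrac{\gamma(2\epsilon)}{m})<\lambda'$ and $\mu(\{\liminf>\lambda'\})>0$ — possible precisely because $\lambda<(1-\tfrac{\gamma(2\epsilon)}{m})\underline h_{\varphi,\mu}(f,\epsilon)$ and the integral is at most the essential supremum — yields $e^{-\lambda'S_{n_i}\varphi(z_i)}\le e^{-\lambda\sup_{y}S_{n_i}\varphi(y)}$, and the mass-distribution bound together with monotonicity of $R$ in the set and in $\lambda$ finishes the proof. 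You should also dispose of the trivial case $1-\tfrac{\gamma(2\epsilon)}{m}\le0$ and note $\lambda\ge0$ may be assumed, since both $\underline h_{\varphi,\mu}(f,\epsilon)$ and $R(f,d,\varphi,K,\tfrac{\epsilon}{2})$ are nonnegative.
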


Next, we give the proof of Theorem 1.4.
\begin{proof}[Proof of Theorem \ref{thm 1.4}]
	
	Firstly, we show $$\overline{{BSmdim}}_{M,K,f}(\varphi,d )=\limsup_{\epsilon \to 0}\frac{\sup \left\{\underline{h}_{\varphi, \mu}(f,\epsilon), \mu \in M(X), \mu (K)=1\right\}}{\log \frac{1}{\epsilon}}.$$ 
	
	It is clear that $LHS\geq RHS$   follows from the Proposition 3.19. On the other hand, we assume that $\overline{{BSmdim}}_{M,K,f}(\varphi,d )>0$. By Proposition 3.17,  we know that $\overline{BSmdim}_{M,K,f}(\varphi,d)=\overline {Wmdim}_{M,K,f}(\varphi,d)$. Let $0<\lambda < \overline {Wmdim}_{M,K,f}(\varphi,d)$. Then we can find a sequence $0<\epsilon_k<1$  that convergences to 0 as $k \to \infty$ so that 
	$$\overline{Wmdim}_{M,K,f}(\varphi,d)=\lim_{k\to \infty} \frac{\overline{Wmdim}_{M,K,f}(\varphi,d,\epsilon_k)}{\log \frac{1}{\epsilon_k}}> \lambda.$$
	Hence, for all  sufficiently large $k$, there is  $N_0\in \mathbb{N}$ such that
	$c:=W(f,d,\varphi,\lambda\log\frac{1}{\epsilon_k},Z,N_0,\epsilon_k)>0$.  By virtue of  Lemma 3.18, there exists a  Borel probability measure $\mu \in M(X)$ such that $\mu (K)=1$ and 
	
	$$\mu(B_n(x,\epsilon_k))\leq \frac{1}{c}e^{-\lambda \log \frac{1}{\epsilon_k}\cdot S_n \varphi (x)}$$
	holds for all $x\in X, n\geq N_0$. 
	
	This gives us that 
	$$\frac{\sup \left\{\underline{h}_{\varphi, \mu}(f,\epsilon_k), \mu \in M(X), \mu (K)=1\right\}}{\log \frac{1}{\epsilon_k}}\geq \frac{\underline{h}_{\varphi, \mu}(f,\epsilon_k)}{\log \frac{1}{\epsilon_k}} \geq \lambda. $$
	for all sufficiently  large $k$, which  implies that $ LHS \leq RHS $.
	
	Next, we  prove that 
	
	$$\overline{{BSPmdim}}_{M,K,f}(\varphi,d )=\limsup_{\epsilon \to 0}\frac{\sup \left\{\overline{h}_{\varphi, \mu}(f,\epsilon), \mu \in M(X), \mu (K)=1\right\}}{\log \frac{1}{\epsilon}}.$$
	Fix a sufficiently small $\epsilon$ with $0<\epsilon<1$.  We may assume that $\overline{{BSPmdim}}_{M,K,f}(\varphi,d, \epsilon )>0$. Let $0<s< \overline{{BSPmdim}}_{M,K,f}(\varphi,d, \epsilon )$. By \cite[Theorem 3.12, Part 2]{wc12},  there  is a $\mu \in M(X)$  with $\mu (K)=1$ such that for any $x\in K$, there exists a subsequence $n_i:=n_i(x)$ so that 
	$$\mu (B_{n_i}(x,\epsilon))\leq C\cdot e^{-s\cdot S_{n_i}\varphi(x) },$$ 
	where $C$ is a constant that does not depend on  the points of  $K$.
	
	It follows that $\overline h _{\varphi,\mu}(f,\epsilon)\geq s$,  and we obtain that 
	
	$$\overline h _{\varphi,\mu}(f,\epsilon)\geq \overline{{BSPmdim}}_{M,K,f}(\varphi,d, \epsilon ),$$ after letting $s \to \overline{{BSPmdim}}_{M,K,f}(\varphi,d, \epsilon )$, 
	which yields that $RHS\geq LHS$.

Let $ \mu \in M(X)$ with $\mu(K)=1$.  We assume that $\overline{h}_{\varphi,\mu}(f,2\epsilon)>0$. Let $0<s<\overline{h}_{\varphi,\mu}(f,2\epsilon)$. We can choose $\delta >0$  and a Borel set $A\subset K$ with $\mu(A)>0$ such that 

$$\limsup_{n \to \infty }-\frac{\log \mu(B_n(x,2\epsilon))}{S_n\varphi(x)}>s+\delta$$
for all $x\in A$.	

Next,  we show   $\mathcal{P}_p(f,d,K,\varphi,s(1-\frac{\gamma(\epsilon)}{m}),\frac{\epsilon}{5})=\infty$, where $m=\min_{x\in X}\varphi(x)>0$ and $\gamma(\epsilon)=\sup\{|\varphi(x)-\varphi(y)|: d(x,y)\leq\epsilon\}.$
To this end, it suffices to show for any $E\subset A$ with $\mu(E)>0$,  we have 
$P_p(f,d,\varphi,E, s(1-\frac{\gamma(\epsilon)}{m}),\frac{\epsilon}{5})=\infty $. Fix such a set $E$, define 

$$E_n:=\left\{x\in E: \mu(B_n(x,2\epsilon))<e^{-(s+\delta)S_n\varphi(x)}\right\}.$$

Then we have $E=\cup_{n\geq N}E_n$ for any $N\in\mathbb{N}$.  Fix such a $N$, by $\mu(E)=\mu(\cup_{n\geq N}E_n)$,  then there is a $n\geq N$ so that 
$$\mu(E_n)\geq \frac{1}{n(n+1)}\mu(E).$$
Fix such $n$, consider a  family of closed cover $\{\overline B_n(x,\frac{\epsilon}{5}):x\in E_n\}$ of $E_n$.  By Lemma \ref{lem 3.15} (replacing $d$ with the Bowen metric $d_n$), then  there exists  a finite  pairwise disjoint subfamily $\{\overline B_n(x_i,\frac{\epsilon}{5}):x_i\in E_n\}_{i\in I}$, where $I $ is a finite  index set, such that 
$$\cup_{i\in I}\overline B_n(x_i,\epsilon)\supseteq \cup_{x\in E_n}\overline B_n(x,\frac{\epsilon}{5})\supseteq E_n.$$

For each $i \in I$, we have
\begin{align*}
	\sup_{y\in \overline  B_n(x_i,\frac{\epsilon}{5})} S_n\varphi(y)&\leq S_n\varphi(x_i)+n\gamma(\epsilon)\\
	&\leq S_n\varphi(x_i)+\frac{ \sup_{y\in \overline B_n(x_i,\frac{\epsilon}{5})} \limits S_n\varphi(y)}{m} \gamma(\epsilon).
\end{align*}	
Hence, 
\begin{align*}
	P_p(f,d,\varphi,E,s(1-\frac{\gamma(\epsilon)}{m}),N,\frac{\epsilon}{5})&\geq 	P_p(f,d,\varphi,E_n,s(1-\frac{\gamma(\epsilon)}{m}),N,\frac{\epsilon}{5})\\
	&\geq \sum_{i\in I} e^{-s(1-\frac{\gamma(\epsilon)}{m})\sup_{y\in \overline B_n(x_i,\frac{\epsilon}{5})} S_n\varphi(y)}\\
	&\geq \sum_{i\in I} e^{-s S_n\varphi(x_i)}\\
	&= \sum_{i\in I} e^{-(s+\delta) S_n\varphi(x_i)}e^{\delta S_n\varphi(x_i)}\\
	&\geq e^{nm\delta} \sum_{i\in I} \mu(\overline B_n(x_i,\epsilon))\\
	&\geq e^{nm\delta} \mu (E_n)\\
	&\geq e^{nm\delta}\frac{\mu(E)}{n(n+1)}.
\end{align*}
Letting $N \to \infty$, we obtain that $P_p(f,d,E,s(1-\frac{\gamma(\epsilon)}{m}),\frac{\epsilon}{5})=\infty.$
This gives us that $$\overline{{BSPmdim}}_{M,K,f}(\varphi,d,\frac{\epsilon}{5})\geq s(1-\frac{\gamma(\epsilon)}{m}).$$
Letting $s \to \overline{h}_{\varphi,\mu}(f,2\epsilon)$, we have  $\overline{h}_{\varphi,\mu}(f,2\epsilon)(1-\frac{\gamma(\epsilon)}{m})\leq \overline{{BSPmdim}}_{M,K,f}(\varphi,d,\frac{\epsilon}{5})$ for all $\mu \in M(X)$ with $\mu(K)=1$. This implies that
$$(1-\frac{\gamma(\epsilon)}{m})\sup\{\overline{h}_{\varphi,\mu}(f,2\epsilon):\mu \in M(X), \mu(K)=1\}\leq\overline{{BSPmdim}}_{M,K,f}(\varphi,d,\frac{\epsilon}{5}),$$
which yields that $ LHS\geq RHS $.	
\end{proof}

\subsection{Bowen upper metric  mean dimension of  the set of generic points} \label{sub 3.4}
We first  collect several types of measure-theoretical entropies defined by invariant measures (or ergodic measures) as  candidates to characterize  the   Bowen upper metric mean dimension of  the sets of generic points of ergodic measures.

Let $(X, f)$ be a TDS with  a metric $d\in \mathscr{D}(X)$. Given $\mu \in M(X,f)$, by $h_{\mu}(f)$ we denote the   measure-theoretical entropy  of $\mu$.
\begin{enumerate}[(i)]
\item   Measure-theoretical entropy    given from the  viewpoint  of  the local perspective. 
Put 
\begin{align*}
	\underline h_{\mu}^{BK}(f,d, \epsilon)&=\int \liminf_{n \to \infty}-\frac{\log\mu(B_n(x,\epsilon))}{n}d\mu,\\
	\overline h_{\mu}^{BK}(f,d, \epsilon)&=\int \limsup_{n \to \infty}-\frac{\log\mu(B_n(x,\epsilon))}{n}d\mu. 
\end{align*}

Brin and Katok \cite{bk83}  showed that $h_{\mu}(f)=\lim_{\epsilon \to 0}\limits\underline h_{\mu}^{BK}(f,d,\epsilon)=\lim_{\epsilon \to 0}\limits\overline h_{\mu}^{BK}(f,d,\epsilon)$ for all $ \mu \in M(X,f)$.
If $\mu$ is an ergodic measure, they also showed that  for each   fixed   $\epsilon>0$,
	 $$\liminf_{n \to \infty}-\frac{\log\mu(B_n(x,\epsilon))}{n},  \limsup_{n \to \infty}-\frac{\log\mu(B_n(x,\epsilon))}{n} $$ are  both constants   for  $\mu $-a.e $x \in X$. In this  case,   we  still denote 
	\begin{align*}
		\underline h_{\mu}^{BK}(f,d,\epsilon)&=\liminf_{n \to \infty}-\frac{\log\mu(B_n(x,\epsilon))}{n},\\
		\overline h_{\mu}^{BK}(f,d,\epsilon)&=\limsup_{n \to \infty}-\frac{\log\mu(B_n(x,\epsilon))}{n}. 
	\end{align*}

\item   Measure-theoretical entropy  defined by  separated set and spanning set.

Put
$$PS(f,d,\mu,\epsilon)=\inf_{F\ni \mu}\limsup_{n \to \infty}\frac{1}{n}\log s_n(f,d,\epsilon,X_{n,F}),$$
where  the infimum runs over all neighborhoods of $\mu$ in $M(X)$ and  $X_{n,F}=\{x\in X: \frac{1}{n}\sum_{j=1}^n \delta_{f^{j}(x)} \in F\}$.

If $\mu \in E(X,f)$, Pfister and Sullivan  \cite{ps07} proved that $h_{\mu}(f)=\lim_{\epsilon \to 0}\limits PS(f,d,\mu,\epsilon)$.

Let $\delta \in (0,1)$. Put 
\begin{align*}
	\overline h^K_{\mu}(f,d,\epsilon,\delta)=\limsup_{n \to \infty}\frac{1}{n}\log r_n(\mu; d,\epsilon,\delta),\\
	\overline h^K_{\mu}(f,d,\epsilon)=\lim_{\delta \to 0}\limsup_{n \to \infty}\frac{1}{n}\log r_n(\mu; d,\epsilon,\delta),
\end{align*}
where  $r_n(\mu; d,\epsilon,\delta)=\min\{\#F: \mu (\cup_{x\in F}B_n(x,\epsilon))>1-\delta, F\subset X\}.$

For $\mu \in E(X,f)$,  Katok  \cite{k80} showed that $h_{\mu}(f)=\lim_{\epsilon \to 0}\limits \overline h^K_{\mu}(f,d,\epsilon,\delta)$ for any $\delta\in (0,1)$.

\item  The last candidate comes from information theory. Recall that  \emph{upper and lower rate distortion dimensions} are respectively given by 

$$\overline{rdim}(X,f,d,\mu)=\limsup_{\epsilon \to 0}\frac{R(d,\mu,\epsilon)}{\log \frac{1}{\epsilon}},$$
$$\underline{rdim}(X,f,d,\mu)=\liminf_{\epsilon \to 0}\frac{R(d,\mu,\epsilon)}{\log \frac{1}{\epsilon}},$$
where  $R(d,\mu,\epsilon)$  denotes
the rate distortion function. 

Replacing $R(d,\mu,\epsilon)$  by $R_{L^{\infty}}(d,\mu,\epsilon)$, one can  similarly  define   upper $L^{\infty}$-rate distortion dimension  $\overline{rdim}_{L^{\infty}}(X,f,d,\mu)$ and lower $L^{\infty}$-rate distortion dimension  $\underline{rdim}_{L^{\infty}}(X,f,d,\mu)$,
where   $R_{L^{\infty}}(d,\mu,\epsilon)$ denotes
$L^{\infty}$-rate distortion function.   Due to the forthcoming  proof does not refer to the definitions  of $R(d,\mu,\epsilon)$   and $R_{L^{\infty}}(d,\mu,\epsilon)$ ,  we  omit  their precise definitions   and  refer readers to \cite{ct06, lt18, lt19} for   more details. 

\end{enumerate}

Inspired by the method used in \cite[Theorem 1.2]{zc18}, we proceed to prove Theorem 1.5, (i). 

\begin{proof}[Proof of  Theorem \ref{thm 1.5}, (i)]
Let $\mu \in M(X,f)$ with $\mu(Y)=1$.  There exists an increasing sequence $Y_n$ of compact subsets  of $Y$ satisfying $\mu (Y_n)>1-\frac{1}{n}$ for all $n\in \mathbb{N}$. 	

Therefore,
$$\overline{{mdim}}_{M,f,Y}(0,d,\epsilon)\geq\overline{{mdim}}_{M,f, \cup_{n\geq 1} Y_n}(0,d,\epsilon)=\lim_{n \to \infty}\overline{{mdim}}_{M,f,Y_n}(0,d,\epsilon). $$

Put $\mu_n:=\mu \vert Y_n$, that is, for any  Borel set $ A\in \mathcal{B}(X)$, $\mu_n(A)=\frac{\mu(A\cap Y_n)}{\mu(Y_n)}.$
Take $\varphi=1$ in Proposition 3.19, note that $\underline h_{\mu}^{BK}(f,d,\epsilon)=\underline h_{1,\mu}(f,\epsilon)$ and $\overline{{mdim}}_{M,f,Y_n}(0,d,\frac{\epsilon}{2})=R(f,d,Y_n,\frac{\epsilon}{2}).$ Then 
$$(1-\gamma(2\epsilon))\underline h_{\mu_n}^{BK}(f,d,\epsilon)\leq \overline{{mdim}}_{M,f,Y_n}(0,d,\frac{\epsilon}{2}).$$
Hence 
\begin{align*}
	\underline h_{\mu_n}^{BK}(f,d,\epsilon)&= \int_{Y_n}\liminf_{m \to \infty}-\frac{1}{m} \log \mu_n(B_m(x,\epsilon))d\mu_n\\
		&=\frac{1}{\mu(Y_n)} \int_{Y_n}\liminf_{m \to \infty}-\frac{1}{m} \log  \frac{\mu (B_m(x,\epsilon)\cap Y_n)}{\mu(Y_n)}d\mu	\\
		&\geq\frac{1}{\mu(Y_n)} \int_{Y_n}\liminf_{m \to \infty}-\frac{1}{m} \log \mu (B_m(x,\epsilon) d\mu.	
\end{align*}
Letting $n \to \infty$, we have 
\begin{align*}
\overline{{mdim}}_{M,f,Y}(0,d,\frac{\epsilon}{2})&\geq\lim_{n \to \infty}\overline{{mdim}}_{M,f,Y_n}(0,d,\frac{\epsilon}{2})\\
&\geq \lim_{n \to \infty}(1-\gamma(2\epsilon))\underline h_{\mu_n}^{BK}(f,d,\epsilon)\\
&\geq (1-\gamma(2\epsilon))\underline h_{\mu}^{BK}(f,d,\epsilon).
\end{align*}
This implies that $\limsup_{\epsilon \to 0}\limits \frac{\underline h_{\mu}^{BK}(f,d,\epsilon)}{\log \frac{1}{\epsilon}}\leq \overline{{mdim}}_{M}(f,Y,d).$
	
\end{proof}
\begin{corollary}
	Let $(X,f)$ be a TDS with  a metric $d\in \mathscr{D}(X)$ and $\mu \in E(X,f)$.
	Then 
	
	$$\limsup_{\epsilon \to 0} \frac{\underline h_{\mu}^{BK}(f,d,\epsilon)}{\log \frac{1}{\epsilon}}\leq \overline{{mdim}}_{M}(f,G_\mu,d).$$
\end{corollary}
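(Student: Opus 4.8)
The plan is to reduce the Corollary to Theorem \ref{thm 1.5}(i) by verifying that the set of generic points carries full measure. First I would recall that $C(X,\mathbb{R})$ is separable in the supremum norm, so I can fix a countable dense subset $\{\varphi_k\}_{k\geq 1}\subset C(X,\mathbb{R})$. Since $\mu\in E(X,f)$, Birkhoff's ergodic theorem applied to each $\varphi_k$ yields a Borel set $X_k$ with $\mu(X_k)=1$ on which $\frac{1}{n}\sum_{j=0}^{n-1}\varphi_k(f^j x)\to\int\varphi_k\,d\mu$. Put $Y:=\bigcap_{k\geq 1}X_k$, so that $\mu(Y)=1$.

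Next I would argue that $Y\subseteq G_\mu$. For $x\in Y$ and an arbitrary $\varphi\in C(X,\mathbb{R})$, given $\eta>0$ choose $k$ with $\|\varphi-\varphi_k\|_\infty<\eta$; then for all $n$,
$$\left|\frac{1}{n}\sum_{j=0}^{n-1}\varphi(f^j x)-\int\varphi\,d\mu\right|\leq 2\eta+\left|\frac{1}{n}\sum_{j=0}^{n-1}\varphi_k(f^j x)-\int\varphi_k\,d\mu\right|,$$
and letting $n\to\infty$ and then $\eta\to 0$ gives $\frac{1}{n}\sum_{j=0}^{n-1}\varphi(f^j x)\to\int\varphi\,d\mu$, i.e. $x\in G_\mu$. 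Hence $\mu(G_\mu)\geq\mu(Y)=1$, so $\mu(G_\mu)=1$.

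Finally, I would invoke Theorem \ref{thm 1.5}(i) with the choice $Y=G_\mu$ (which satisfies $G_\mu\subset X$ and $\mu(G_\mu)=1$), obtaining immediately
$$\limsup_{\epsilon \to 0} \frac{\underline h_{\mu}^{BK}(f,d,\epsilon)}{\log \frac{1}{\epsilon}}\leq \overline{{mdim}}_{M}(f,G_\mu,d),$$
which is the claimed inequality. There is essentially no genuine obstacle here: the only point requiring a little care is the separability argument showing $\mu(G_\mu)=1$, and that is entirely standard. The substantive work was already done in Theorem \ref{thm 1.5}(i) (via Proposition 3.19 and the Vitali-type covering Lemma \ref{lem 3.15}), so this Corollary is a direct specialization.
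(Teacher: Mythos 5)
Your proposal is correct and follows exactly the route the paper intends: the corollary is a direct specialization of Theorem \ref{thm 1.5}(i) to $Y=G_\mu$, justified by the standard fact that $\mu(G_\mu)=1$ for ergodic $\mu$ (via Birkhoff's theorem and separability of $C(X,\mathbb{R})$). The paper treats this as immediate and gives no separate argument, so there is nothing further to compare.
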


\begin{proposition}
	Let $(X,f)$ be a TDS  with  a metric $d\in \mathscr{D}(X)$  and $\mu \in E(X,f)$. Then for each $\epsilon>0$, 
	$$\overline h^K_{\mu}(f,d,2\epsilon)\leq \overline h_{\mu}^{BK}(f,d,\epsilon). $$
\end{proposition}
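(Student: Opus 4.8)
The plan is to adapt the classical Katok covering argument. Since $\mu$ is ergodic, $a:=\overline h_{\mu}^{BK}(f,d,\epsilon)$ is the $\mu$-a.e.\ constant value of $\limsup_{n\to\infty}\big(-\frac1n\log\mu(B_n(x,\epsilon))\big)$. Fix $\eta>0$. For $\mu$-a.e.\ $x$ this $\limsup$ equals $a<a+\eta$, so $\mu(B_n(x,\epsilon))>e^{-n(a+\eta)}$ for all sufficiently large $n$; consequently the Borel sets
\[
A_N:=\Big\{x\in X:\ \mu(B_n(x,\epsilon))>e^{-n(a+\eta)}\ \text{ for all } n\geq N\Big\}
\]
increase, as $N\to\infty$, to a set of full $\mu$-measure. (At this point I would quote the standard fact that $x\mapsto\mu(B_n(x,\epsilon))$ is Borel measurable, which is what is used throughout Brin--Katok theory.) Thus, given any $\delta\in(0,1)$, I fix $N_0$ with $\mu(A_{N_0})>1-\delta$.

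Next, for each fixed $n\geq N_0$ I choose a maximal $(n,2\epsilon)$-separated subset $E_n\subset A_{N_0}$, which is finite by compactness of $X$. Maximality yields $A_{N_0}\subset\bigcup_{x\in E_n}B_n(x,2\epsilon)$, so $\mu\big(\bigcup_{x\in E_n}B_n(x,2\epsilon)\big)\geq\mu(A_{N_0})>1-\delta$, and hence $r_n(\mu;d,2\epsilon,\delta)\leq\#E_n$. On the other hand, $(n,2\epsilon)$-separation forces the balls $\{B_n(x,\epsilon)\}_{x\in E_n}$ to be pairwise disjoint, so $1\geq\sum_{x\in E_n}\mu(B_n(x,\epsilon))>\#E_n\cdot e^{-n(a+\eta)}$, i.e.\ $\#E_n<e^{n(a+\eta)}$. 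Combining these two bounds, $\frac1n\log r_n(\mu;d,2\epsilon,\delta)\leq a+\eta$ for every $n\geq N_0$, hence $\overline h^K_{\mu}(f,d,2\epsilon,\delta)\leq a+\eta$.

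Since the bound $a+\eta$ does not depend on $\delta$, letting $\delta\to0$ gives $\overline h^K_{\mu}(f,d,2\epsilon)\leq a+\eta$, and then $\eta\to0$ gives $\overline h^K_{\mu}(f,d,2\epsilon)\leq a=\overline h_{\mu}^{BK}(f,d,\epsilon)$, as desired. I expect the only genuinely delicate point to be the Egorov-type extraction of the set $A_{N_0}$ of measure $>1-\delta$ on which $\mu(B_n(x,\epsilon))>e^{-n(a+\eta)}$ holds \emph{simultaneously} for all $n\geq N_0$; once this uniformity in $n$ is secured, the separated-set counting is routine, and it is precisely the use of scale $2\epsilon$ for the separation (so that the $\epsilon$-balls are disjoint) that produces the $2\epsilon$ on the left-hand side while leaving $\epsilon$ on the right.
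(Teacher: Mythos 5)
Your proof is correct and follows essentially the same route as the paper's: both extract a set of measure $>1-\delta$ on which $\mu(B_n(x,\epsilon))>e^{-ns}$ holds for all $n\geq N_0$, then count a maximal $(n,2\epsilon)$-separated subset using disjointness of the $\epsilon$-Bowen balls. The only cosmetic difference is that you quote the a.e.-constancy of the limsup from Brin--Katok theory, whereas the paper rederives it from $F(f(x),\epsilon)\leq F(x,\epsilon)$ and ergodicity.
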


\begin{proof}
	Fix $\epsilon>0$ and let $F(x,\epsilon)=\limsup_{n \to \infty} \limits -\frac{\log\mu(B_n(x,\epsilon))}{n}$. Since 
	$f(B_{n+1}(x,\epsilon))\subset B_n(f(x),\epsilon)$ for all $x\in X$ and $n\in \mathbb{N}$ and $\mu \in M(X,f)$, we have
	$$\mu(B_{n+1}(x,\epsilon)) \leq \mu(f^{-1}f(B_{n+1}(x,\epsilon)))=\mu(f(B_{n+1}(x,\epsilon)))\leq \mu(B_n(f(x),\epsilon)).$$
	This yields that  $F(f(x),\epsilon)\leq F(x,\epsilon)$. It follows from  $\mu \in E(X,f)$ that   $F(x,\epsilon)$ is a constant  $\mu$-a.e $x \in X$. 
  Let $s>\overline h_{\mu}^{BK}(f,d,\epsilon)$.  Fix $\delta \in (0,1)$ and set
	$$X_N:=\{x\in X: \mu (B_n(x,\epsilon))>e^{-ns}, \forall n\geq N\}.$$ 
	Then we have $\mu(\cup_{N\geq 1}X_N)=1$. There exists  $N_0$ such that for any $N\geq N_0$, $\mu (X_N)>1-\delta$.  For each $N\geq N_0$, let $E_N$ be the $(N,2\epsilon)$-separated  set of $X_N$ with maximal cardinality. Note that the Bowen balls  $B_N (x,\epsilon), x\in E_N$ are  pairwise disjoint, hence  we have 
	
	$$1\geq \mu(\cup_{x\in E_N}(B_N(x,\epsilon)))=\sum_{x\in E_N}\mu(B_N(x,\epsilon))>\sum_{x\in E_N} e^{-Ns}. $$
	Then $r_N(\mu;d,2\epsilon,\delta)\leq\#E_N\leq e^{Ns}$ for all $N\geq N_0$, which implies  $\overline h^K_{\mu}(f,d,2\epsilon)\leq\overline h_{\mu}^{BK}(f,d,\epsilon)$.
\end{proof}

\begin{proposition}\rm {\cite[Proposition 4.3, Proposition 6.1]{w21}}
	Let $(X,f)$ be a TDS   with  a metric $d\in \mathscr{D}(X)$ and $\mu \in E(X,f)$. Then for each  $\epsilon>0$, 
	$$R_{ L^{\infty}}(d,\mu,\epsilon)\leq \overline  h^K_{\mu}(f,d,\epsilon)\leq PS(f,d,\mu,\epsilon)\leq R_{ L^{\infty}}(d,\mu,\frac{1}{6}\epsilon)$$
	 and  $\overline{{mdim}}_{M,f,G_\mu}(0,d,\epsilon) \leq PS(f,d,\mu,\epsilon)$.
\end{proposition}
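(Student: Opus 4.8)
The plan is to run through the four inequalities in turn, disposing of the two soft ones first. For $\overline h^K_{\mu}(f,d,\epsilon)\le PS(f,d,\mu,\epsilon)$ I would fix a neighbourhood $F$ of $\mu$ in $M(X)$ and a $\delta\in(0,1)$: since $\mu$ is ergodic, Birkhoff's theorem applied to the finitely many continuous functions cutting out $F$ gives $\mu(X_{n,F})\to 1$, so $\mu(X_{n,F})>1-\delta$ for all $n\ge N_0$. A maximal $(n,\epsilon)$-separated subset $E_n$ of $X_{n,F}$ is automatically $(n,\epsilon)$-spanning for $X_{n,F}$, hence $\mu\bigl(\bigcup_{x\in E_n}B_n(x,\epsilon)\bigr)\ge\mu(X_{n,F})>1-\delta$ and therefore $r_n(\mu;d,\epsilon,\delta)\le\#E_n=s_n(f,d,\epsilon,X_{n,F})$ for $n\ge N_0$. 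Taking $\limsup_n\frac1n\log(\cdot)$, then $\delta\to 0$, then $\inf_F$, delivers the inequality.

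For $\overline{{mdim}}_{M,f,G_\mu}(0,d,\epsilon)\le PS(f,d,\mu,\epsilon)$ I would run the classical Bowen covering argument. Fix $F\ni\mu$; each $x\in G_\mu$ has $\frac1n\sum_{j=1}^n\delta_{f^j(x)}\in F$ for all large $n$, so $G_\mu\subset\bigcup_{\ell\ge 1}\bigcap_{n\ge \ell}X_{n,F}$. For fixed $\ell$ and any $\lambda>\limsup_n\frac1n\log s_n(f,d,\epsilon,X_{n,F})$, the set $\bigcap_{k\ge \ell}X_{k,F}$ can, for arbitrarily large orders $n$, be covered by $r_n(f,d,\epsilon,X_{n,F})\le s_n(f,d,\epsilon,X_{n,F})$ Bowen $\epsilon$-balls, while $s_n(f,d,\epsilon,X_{n,F})e^{-n\lambda}\to 0$; hence the Carath\'eodory sum $M(f,d,\bigcap_{k\ge \ell}X_{k,F},0,\lambda,\epsilon)$ vanishes and the $\epsilon$-scale Bowen critical value of $\bigcap_{k\ge \ell}X_{k,F}$ is $\le\lambda$. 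Countable stability and monotonicity of the Bowen--Carath\'eodory construction then give $\overline{{mdim}}_{M,f,G_\mu}(0,d,\epsilon)\le\limsup_n\frac1n\log s_n(f,d,\epsilon,X_{n,F})$, and taking $\inf_F$ finishes it.

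The two inequalities involving $R_{L^{\infty}}(d,\mu,\epsilon)$ genuinely need the rate--distortion machinery, and I would follow the Lindenstrauss--Tsukamoto and Wang approach. For $R_{L^{\infty}}(d,\mu,\epsilon)\le\overline h^K_{\mu}(f,d,\epsilon)$: a family of $r_n(\mu;d,\epsilon,\delta)$ Bowen $\epsilon$-balls carrying $\mu$-mass bigger than $1-\delta$ yields a length-$n$ $L^{\infty}$-code (reproduction points $=$ the ball centres) of rate $\frac1n\log r_n(\mu;d,\epsilon,\delta)$ and distortion $\le\epsilon$ off a set of $\mu$-measure $<\delta$; collapsing that exceptional set onto a fixed point (legitimate since $(X,d)$ is bounded) and letting $\delta\to 0$ bounds the relevant mutual information, which gives the claim. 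The hard direction is $PS(f,d,\mu,\epsilon)\le R_{L^{\infty}}(d,\mu,\frac{1}{6}\epsilon)$: one takes a near-optimal $L^{\infty}$-code at distortion $\frac{1}{6}\epsilon$, observes that its typical reproduction words number at most $e^{n(R_{L^{\infty}}(d,\mu,\frac{1}{6}\epsilon)+o(1))}$ and that, off a set of small $\mu$-measure, every point lies within $\frac{1}{6}\epsilon$ of one of them in $d_n$; since two inputs sharing a reproduction word are closer than $\epsilon$ in $d_n$, each such word meets at most one point of a given $(n,\epsilon)$-separated set, so for a neighbourhood $F$ shrunk so that $X_{n,F}$ lies (for large $n$) in the good set one gets $s_n(f,d,\epsilon,X_{n,F})\le e^{n(R_{L^{\infty}}(d,\mu,\frac{1}{6}\epsilon)+o(1))}$, and $\inf_F$ gives the inequality.

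I expect this last step to be the main obstacle: lower-bounding, for a well-chosen $F$, the size of $(n,\epsilon)$-separated subsets of $X_{n,F}$ by a rate--distortion quantity forces one to extract a reproduction codebook of controlled cardinality from an optimal $L^{\infty}$-code while simultaneously taming both the exceptional set and the position of $X_{n,F}$ relative to it; the room between $\frac{1}{6}\epsilon$ and $\frac{1}{2}\epsilon$ is exactly what the typical-set passage and this small-measure surgery consume. This is the content of \cite[Proposition 6.1]{w21}; the other inequalities --- $\overline h^K_{\mu}(f,d,\epsilon)\le PS(f,d,\mu,\epsilon)$, $R_{L^{\infty}}(d,\mu,\epsilon)\le\overline h^K_{\mu}(f,d,\epsilon)$, and $\overline{{mdim}}_{M,f,G_\mu}(0,d,\epsilon)\le PS(f,d,\mu,\epsilon)$ --- are comparatively routine, the first and third being carried out above.
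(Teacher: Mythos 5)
The paper does not actually prove this proposition --- it is quoted directly from Wang \cite[Propositions 4.3 and 6.1]{w21} --- so there is no internal argument to compare against; your proofs of $\overline h^K_{\mu}(f,d,\epsilon)\leq PS(f,d,\mu,\epsilon)$ (maximal separated sets are spanning, plus $\mu(X_{n,F})\to 1$ by Birkhoff) and of $\overline{{mdim}}_{M,f,G_\mu}(0,d,\epsilon)\leq PS(f,d,\mu,\epsilon)$ (the Bowen covering argument with countable stability at fixed $\epsilon$) are the standard, correct arguments, and your outline of the two rate--distortion inequalities follows the same strategy as the cited source. Nothing further is needed.
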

Finally, we give the proof of Theorem 1.5,(ii).
\begin{proof}[Proof of Theorem \ref{thm 1.5}, (ii)]
	By virtue of Proposition 3.22 and Proposition 3.23,
	 we have 
	 \begin{align*}
	 	\overline{{mdim}}_{M}(f,G_\mu, d,12\epsilon)&= \overline{{mdim}}_{M,f,G_\mu}(0,d,12\epsilon)\\ 
	 	&\leq PS(f,d,\mu,12\epsilon)\\
	 	&\leq R_{ L^{\infty}}(d,\mu,2\epsilon)\\
	 	&\leq \overline h^K_{\mu}(f,d,2\epsilon)\\
	 	&\leq \overline h_{\mu}^{BK}(f,d,\epsilon).
	 \end{align*}
 Combining the assumption  $\limsup_{\epsilon \to 0}\limits \frac{\underline h_{\mu}^{BK}(f,d,\epsilon)}{\log \frac{1}{\epsilon}}=\limsup_{\epsilon \to 0}\limits \frac{\overline h_{\mu}^{BK}(f,d,\epsilon)}{\log \frac{1}{\epsilon}}$ and  Corollary 3.21, this completes the proof.

\end{proof}

One says that a compact metric space $(X,d)$ admits \emph{tame growth of covering numbers} if for each $\theta>0$,
$$\lim_{\epsilon \to 0}\epsilon^{\theta}\log r_1(f,d,\epsilon,X)=0.$$

This condition was introduced by  Lindenstrauss  and Tsukamoto \cite{lt18}  to show the metric mean dimensions defined by Bowen metric and average metric  coincide, see \cite[Lemma 26]{lt18},  which was proved  as a fairly  mild condition \cite[Lemma 4]{lt18}. Namely, every compact metrizable space  admits  a  metric with the property of tame growth of covering numbers,  and  some examples satisfying such condition  can be found in \cite[Example 3.9]{lt19}.

Under the assumption of   tame growth of covering numbers,  Wang \cite [Theorem 1.7]{w21}  also showed   if  $\mu \in E(X,f)$, then 
$$\overline{rdim}(X,f,d,\mu)=\overline{rdim}_{L^{\infty}}(X,f,d,\mu).$$

 Together with Theorem 1.5,  we immediately deduce the following.
 
\begin{corollary}Let $(X,f)$ be a TDS with a  metric $d\in \mathscr{D}(X)$  admitting tame growth of covering numbers,  and suppose that $\mu \in E(X,f)$ satisfying $\limsup_{\epsilon \to 0} \limits \frac{\overline h_{\mu}^{BK}(f,d,\epsilon)}{\log \frac{1}{\epsilon}}=\limsup_{\epsilon \to 0}\limits \frac{\underline h_{\mu}^{BK}(f,d,\epsilon)}{\log \frac{1}{\epsilon}}$. Then 
	$$\overline{{mdim}}_{M}(f,G_\mu,d)=\overline{rdim}(X,f,d,\mu)=\overline{rdim}_{L^{\infty}}(X,f,d,\mu).$$ 
\end{corollary}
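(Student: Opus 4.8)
The plan is to obtain the corollary as a one-line composition of two results already in place: Theorem \ref{thm 1.5}(ii) of this paper and Wang's identity for rate distortion dimensions \cite[Theorem 1.7]{w21}. First I would verify that the hypotheses of Theorem \ref{thm 1.5}(ii) hold under the standing assumptions of the corollary. We are given $\mu\in E(X,f)$, and we are given the coincidence $\limsup_{\epsilon\to0}\overline h_\mu^{BK}(f,d,\epsilon)/\log\tfrac1\epsilon=\limsup_{\epsilon\to0}\underline h_\mu^{BK}(f,d,\epsilon)/\log\tfrac1\epsilon$; since for an ergodic measure the maps $x\mapsto\liminf_n-\tfrac1n\log\mu(B_n(x,\epsilon))$ and $x\mapsto\limsup_n-\tfrac1n\log\mu(B_n(x,\epsilon))$ are $\mu$-a.e.\ constant (as recalled in Subsection \ref{sub 3.4}), this is exactly the hypothesis appearing in Theorem \ref{thm 1.5}(ii). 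Invoking that theorem therefore yields, in particular, the equality $\overline{{mdim}}_{M}(f,G_\mu,d)=\overline{rdim}_{L^{\infty}}(X,f,d,\mu)$.

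Second, I would use that the metric $d$ has tame growth of covering numbers, which is precisely the condition under which \cite[Theorem 1.7]{w21} asserts that $\overline{rdim}(X,f,d,\mu)=\overline{rdim}_{L^{\infty}}(X,f,d,\mu)$ for every $\mu\in E(X,f)$. Substituting this identity into the equality produced in the previous step gives
$$\overline{{mdim}}_{M}(f,G_\mu,d)=\overline{rdim}(X,f,d,\mu)=\overline{rdim}_{L^{\infty}}(X,f,d,\mu),$$
which is the assertion of the corollary.

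There is no genuine technical obstacle here: all the analytic content has been absorbed into Theorem \ref{thm 1.5} and into the cited result of Wang, so the "hard part" is merely the bookkeeping check that the $\limsup$ hypothesis of the corollary is literally the one required by Theorem \ref{thm 1.5}(ii) (same quantities $\overline h_\mu^{BK}$, $\underline h_\mu^{BK}$, with the ergodic constants in place), so that the theorem applies verbatim; once this is confirmed, the proof is just the composition of the two displayed equalities.
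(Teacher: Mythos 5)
Your proposal is correct and is exactly the paper's argument: the corollary is stated as an immediate consequence of Theorem \ref{thm 1.5}(ii) combined with Wang's result \cite[Theorem 1.7]{w21} that tame growth of covering numbers forces $\overline{rdim}(X,f,d,\mu)=\overline{rdim}_{L^{\infty}}(X,f,d,\mu)$ for ergodic $\mu$. Nothing further is needed.
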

\begin{example}
	Let $\sigma:[0,1]^{\mathbb{Z}}\rightarrow [0,1]^{\mathbb{Z}}$ be the shift on alphabet $[0,1]$, where $[0,1]$ is the unit interval with the standard metric. Equipped $ [0,1]^{\mathbb{Z}}$ with a metric given by 
	$$d(x,y)=\sum_{n\in \mathbb{Z}}2^{-|n|}|x_n-y_n|.$$
	Then $([0,1]^{\mathbb{Z}},d)$  has the tame growth of covering numbers, see \cite[Example 3.9]{lt19}. 
	Let $\mu=\mathcal{L}^{\otimes \mathbb{Z}}$, where $\mathcal{L}$ is the Lebesgue measure on $[0,1]$. 
	
	For each $\epsilon>0$, $x\in [0,1]^{\mathbb{Z}}$. Let $r=\left \lceil  \log_2\frac{4}{\epsilon}\right \rceil+1$. Then $\sum_{|n|>r}2^{-|n|}<\frac{\epsilon}{2}$.  Put
	\begin{align*}
		I_n(x,\epsilon):&=\left\{y\in [0,1]^{\mathbb{Z}}:|x_i-y_i|< \frac{\epsilon}{6},\forall -r\leq i\leq n+r \right\},\\
		J_n(x,\epsilon):&=\left\{y\in [0,1]^{\mathbb{Z}}:|x_i-y_i|< \epsilon, \forall0 \leq i\leq n\right\}.
	\end{align*}
	One can check that 
	$$	I_n(x,\epsilon)\subset B_n(x,\epsilon)\subset J_n(x,\epsilon).$$
	It is clear that  $\mu(I_n(x,\epsilon))\geq (\frac{\epsilon}{6})^{n+2r}, \mu(J_n(x,\epsilon))\leq (4\epsilon)^n$. This implies that 
	$$\log \frac{1}{4\epsilon}\leq\underline{h}_{\mu}^{BK} (\sigma,d,\epsilon)\leq \overline{h}_{\mu}^{BK} (\sigma,d,\epsilon)\leq \log \frac{6}{\epsilon},$$
	which tells us that $\limsup_{\epsilon \to 0}\limits  \frac{\overline h_{\mu}^{BK}(\sigma,d,\epsilon)}{\log \frac{1}{\epsilon}}=\limsup_{\epsilon \to 0} \limits \frac{\underline h_{\mu}^{BK}(\sigma,d,\epsilon)}{\log \frac{1}{\epsilon}}=1$.
	
	It is well-known that $\overline{{mdim}}_{M}(\sigma,[0,1]^{\mathbb{Z}},d)=1$, see \cite[Section II, E. Example]{lt18}. By Corollary 3.21, $1\leq\overline{{mdim}}_M(\sigma,G_{\mu},d)\leq\overline{{mdim}}_M(\sigma,[0,1]^{\mathbb{Z}},d)=1.$
	So  $\overline{{mdim}}_M(\sigma,G_{\mu},d)=1$.
	By \cite[Example 22]{lt18}, we know 
	$\overline{rdim}([0,1]^{\mathbb{Z}},\sigma,d,\mu)=\overline{rdim}_{L^{\infty}}([0,1]^{\mathbb{Z}},\sigma,d,\mu)=1.$ Finally, 
	$$\overline{{mdim}}_M(\sigma,G_{\mu},d)=\overline{rdim}([0,1]^{\mathbb{Z}},\sigma,d,\mu)=\overline{rdim}_{L^{\infty}}([0,1]^{\mathbb{Z}},\sigma,d,\mu)=1.$$
	
\end{example}

\subsection*{Acknowledgements}
\noindent We would like to thank anonymous referees and editors for useful comments on this article. The work was supported by the
National Natural Science Foundation of China (Nos.12071222 and 11971236), China Postdoctoral Science Foundation (No.2016M591873),
and China Postdoctoral Science Special Foundation (No.2017T100384). The work was also funded by the Priority Academic Program Development of Jiangsu Higher Education Institutions.  We would like to express our gratitude to Tianyuan Mathematical Center in Southwest China(No. 11826102), Sichuan University and Southwest Jiaotong University for their support and hospitality.

\end{document}